\numberwithin{equation}{section}
\begin{document}

\theoremstyle{plain}
\newtheorem{theorem}{Theorem}[section]
\newtheorem{lemma}[theorem]{Lemma}
\newtheorem{proposition}[theorem]{Proposition}
\newtheorem{corollary}[theorem]{Corollary}
\newtheorem{conjecture}[theorem]{Conjecture}

\def\mod#1{{\ifmmode\text{\rm\ (mod~$#1$)}
\else\discretionary{}{}{\hbox{ }}\rm(mod~$#1$)\fi}}

\theoremstyle{definition}
\newtheorem*{definition}{Definition}

\theoremstyle{remark}
\newtheorem{remark}{Remark}[section]
\newtheorem{example}{Example}[section]
\newtheorem*{remarks}{Remarks}
\newcommand{\ndiv}{\hspace{-4pt}\not|\hspace{2pt}}
\newcommand{\cc}{{\mathbb C}}
\newcommand{\qq}{{\mathbb Q}}
\newcommand{\rr}{{\mathbb R}}
\newcommand{\nn}{{\mathbb N}}
\newcommand{\zz}{{\mathbb Z}}
\newcommand{\pp}{{\mathbb P}}
\newcommand{\al}{\alpha}
\newcommand{\be}{\beta}
\newcommand{\ga}{\gamma}
\newcommand{\ze}{\zeta}
\newcommand{\ka}{\kappa}
\newcommand{\om}{\omega}
\newcommand{\mz}{{\mathcal Z}}
\newcommand{\mi}{{\mathcal I}}
\newcommand{\ep}{\epsilon}
\newcommand{\la}{\lambda}
\newcommand{\de}{\delta}
\newcommand{\tha}{\theta}
\newcommand{\De}{\Delta}
\newcommand{\Ga}{\Gamma}
\newcommand{\si}{\sigma}
\newcommand{\Exp}{{\rm Exp}}
\newcommand{\legen}[2]{\genfrac{(}{)}{}{}{#1}{#2}}
\def\End{{\rm End}}
\title{Equal sums of two cubes of quadratic forms}
\dedicatory{This paper is dedicated to my friend and colleague Bruce Berndt on his 80th birthday.}
\author{Bruce Reznick}
\address{Department of Mathematics, University of 
Illinois at Urbana-Champaign, Urbana, IL 61801} 
\email{reznick@math.uiuc.edu}
\date{\today} 
\subjclass[2000]{Primary: 11E76, 14M99; Secondary: 11D41, 11D45}
\begin{abstract}
We give a complete description of all solutions to the equation $f_1^3 + f_2^3 = f_3^3 + f_4^3$
for quadratic forms $f_j \in \mathbb C[x,y]$ and show how Ramanujan's example can be 
extended to three equal sums of pairs of cubes. We also give a complete census in counting
the number of ways a sextic $p \in \mathbb C[x,y]$ can be written as a sum of two cubes. The
extreme example is $p(x,y) = xy(x^4-y^4)$, which has six such representations.
\end{abstract}

\thanks{The author was supported by Simons Collaboration Grant 280987.}

\maketitle

\section{Introduction} 
In 1913, Ramanujan \cite{Ram}, \cite[p.326]{Ram2} (see \cite[p.56]{B},
\cite[p.201]{HW}) posed to the {\it Journal of
the Indian Mathematical Society} the following question: ``Shew that
\begin{equation}\label{E:Ram}
\begin{gathered}
(6x^2-4xy+4y^2)^3 = \\
(3x^2+5xy-5y^2)^3 +  (4x^2 -4xy+6y^2)^3 +
(5x^2-5xy-3y^2)^3,
\end{gathered}
\end{equation}
and find other quadratic expressions satisfying similar relations.'' Write  \eqref{E:Ram}
as $R_1^3(x,y) = R_2^3(x,y)  + R_3^3(x,y)  + R_4^3(x,y) $ for short. 

In 1914, Narayanan \cite{Nar} replaced the integers in \eqref{E:Ram} with the variables 
$\ell,m,n,p$ and solved the resulting equations; namely, 
$m^3+n^3=p^3 -\ell^3 = m p^2 + n\ell^2$, over $\mathbb R$.
 \begin{equation}\label{E:Nar}
\begin{gathered}
(\ell x^2 - n x y + ny^2)^3 = \\
(px^2 + mxy - my^2)^3 +   (nx^2 - nxy + \ell y^2)^3 + (mx^2 - mxy -p y^2)^3; \\
\ell = \la(\la^3 + 1),\quad m = 2\la^3 - 1,\quad n = \la(\la^3
-2),\quad  p = \la^3 + 1.
\end{gathered}
\end{equation}
Write   \eqref{E:Nar}
as $N_{1,\la}^3(x,y) = N_{2,\la}^3(x,y)  + N_{3,\la}^3(x,y)  +N_{4,\la}^3(x,y) $,
and note $N_{j,2} = 3 R_j$.

Equation \eqref{E:Ram} can be rewritten as two equal sums of two cubes in three different
ways, and in two of the three ways, there is a third equal sum of two cubes. First,
\begin{equation}\label{Flip1}
\begin{gathered}
(4x^2 -4xy+6y^2)^3 + (5x^2-5xy-3y^2)^3 \\
=(6x^2-4xy+4y^2)^3  -(3x^2+5xy-5y^2)^3 \\ 
 =(6x^2 - 8xy + 6y^2)^3 - (3x^2-11xy+3y^2)^3 \\
 =63(x^2 + xy + y^2)(3x^2  - 3xy + y^2)(x^2 - 3xy +3y^2).
\end{gathered}
\end{equation}

We also have
\begin{equation}\label{Flip2}
\begin{gathered}
(6x^2-4xy+4y^2)^3 - (5x^2-5xy-3y^2)^3  \\
= (4x^2 -4xy+6y^2)^3 +(3x^2+5xy-5y^2)^3\\ 
 = \left( \tfrac {94}{21}x^2 - \tfrac 8{21}xy+ \tfrac {94}{21}y^2\right)^3
+ \left( \tfrac {23}{21}x^2 - \tfrac{199}{21}xy+ \tfrac
{23}{21}y^2\right)^3 \\
= (13x^2 - 23 xy + 13y^2)(7x^2 + xy + y^2)(x^2+xy + 7y^2),
\end{gathered}
\end{equation}
and
\begin{equation}\label{Flip3}
\begin{gathered}
(6x^2-4xy+4y^2)^3 - (4x^2 -4xy+6y^2)^3\\
= (3x^2+5xy-5y^2)^3 + (5x^2-5xy-3y^2)^3 \\ 
= 8(x-y)(x+y)(x^2-xy + y^2)(19x^2 -11xy + 19y^2).
\end{gathered}
\end{equation}
It can be shown that there is no third representation in \eqref{Flip3}.
Furthermore, \eqref{Flip2} follows from \eqref{Flip1} (with the rows permuted)
upon making the unimodular linear change of variables:
$\left(x,y\right) \to ( \frac{5x-2y}{\sqrt{21}},
\frac{3x+3y}{\sqrt{21}})$.

Comparable versions of these properties apply to the Narayanan
formulas (see \eqref{E:Naren}).  More to the point, up to transposition of terms,
changes of variable and taking $\la \in 
\mathbb C$, we shall show that \eqref{E:Nar} 
{\it completely} describes the solution in  binary quadratic forms $f_j = f_j(x,y) \in \mathbb C[x,y]$
to
\begin{equation}\label{E:funda}
p = f_1^3 + f_2^3 = f_3^3 + f_4^3.
\end{equation}
Our analysis comes from looking at the equation in quadratic forms over $\mathbb C$
and studying the properties of the common sum $p$. 

We begin with some notations, following those in \cite{Re3}. For $m \ge 3$,
let $\ze_m=  e^{\frac{2\pi i}m}$ and $\om = \ze_3$.
Two forms in $\cc[x,y]$ are  {\it distinct} 
if they are not proportional. The identity \eqref{E:funda} is {\it honest} if 
the $f_j$'s are pairwise distinct. A {\it flip} of \eqref{E:funda} is either of the two equivalent identities
\begin{equation}\label{E:flip}
p_1 = f_1^3 - f_3^3 = -f_2^3 + f_4^3, \qquad p_2 = f_1^3 - f_4^3 = -f_2^3 + f_3^3.
\end{equation}
There seems to be no obvious way of deriving $p_1$ or $p_2$ from $p$ in \eqref{E:flip}. If
\eqref{E:funda} holds, we say that the family $\mathcal F = \{\{f_1,f_2\}, \{f_3,f_4\}\}$
{\it represents} $p$,  with the understanding that
two families $\mathcal F$ and $\mathcal G$ are identified if 
$\{\{f_1^3,f_2^3\}, \{f_3^3,f_4^3\}\}  = \{\{g_1^3,g_2^3\}, \{g_3^3,g_4^3\}\}$; we do not care
about the order of the summands, or powers of $\om$ multiplying the quadratics.  
For a sextic form $p \in \mathbb C[x,y]$,
we define $N(p)$ to be the number of pairwise-nonsimilar families $\mathcal F$ representing
$p$.

 If $M(x,y) = (\al x + \be y, \ga x + \de y)$ is an invertible linear change of variables 
(or {\it linear change} for short), and $f \in \cc[x,y]$ is a form, define $f \circ M$ by 
 $(f \circ M)(x,y)=f(\al x + \be y, \ga x + \de y)$.
A {\it scaling} is a linear change in which $\be = \ga = 0$. If $\deg f = d$, and $\de = \al$ in
a scaling $M$, then $f\circ M = \al^d f$, If $M$ is a linear change,
and $g = f \circ M$, then $f$ and $g$ are {\it similar}, and if $\mathcal G= \mathcal F\circ M$,
the $\mathcal F$ and $\mathcal G$ will also be called similar. 

More generally, suppose the equation 
\begin{equation}\label{E:onerep}
p = f_1^3 + f_2^3
\end{equation}
holds. If $M$ is a linear change, then \eqref{E:onerep} implies that 
$p\circ M = (f_1\circ M)^3 + (f_2\circ M)^3$. 
It may happen that $p = p \circ M$, but that $\{(f_1\circ M)^3, (f_2\circ M)^3\} \neq
\{f_1^3,f_2^3\}$: this seems to be the inherent mechanism behind
multiple representations.

The following underlying identity is central to our analysis. For $\al \in \cc$, 
\begin{equation}\label{E:threefold}
(\al x^2 - xy + \al y^2)^3 +\al( - x^2 +\al xy - y^2)^3 =  (\al^2-1)(\al x^3 + y^3)(x^3 + \al y^3).
\end{equation}
(This can easily be verified by setting $v = x^2 + y^2$ and $w = xy$ and noting that $v^3-3vw^2 
= x^6 + y^6$.)  Observe that the sum is a quadratic in $\{x^3,y^3\}$, and so if $(x,y) \mapsto
 (\om x, \om^2 y)$, then the sum is unchanged, although the summands are changed. 
 Writing $\al = \la^3$, we can bring in the outside coefficient and obtain
 \begin{equation}\label{E:threefold2}
\begin{gathered}
(\la^3 x^2 - xy + \la^3 y^2)^3 +( - \la x^2 +\la^4 xy - \la y^2)^3 \\ 
= (\la^3 \om^2 x^2 - xy + \la^3 \om y^2)^3 +( - \la \om^2 x^2 +\la^4 xy - \la \om y^2)^3\\
= (\la^3 \om x^2 - xy + \la^3 \om^2y^2)^3 +( - \la \om x^2 +\la^4 xy - \la \om^2 y^2)^3
 \\ = p_{1,\la}(x,y):=  (\la^6-1)(\la^3 x^3 + y^3)(x^3 + \la^3y^3).
\end{gathered}
\end{equation}
Write the summands in \eqref{E:threefold2} as:
\begin{equation}\label{E:naming}
\begin{gathered}
F_{1,\la}(x,y) = \la^3 x^2 - xy + \la^3 y^2, \quad F_{2,\la}(x,y) =  - \la x^2 +\la^4 xy - \la y^2,\\
F_{3,\la}(x,y) = F_{1,\la}(\om x, \om^2 y), \quad F_{4,\la}(x,y) = F_{2,\la}(\om x, \om^2 y),\\
F_{5,\la}(x,y) = F_{1,\la}(\om^2 x, \om y), \quad F_{6,\la}(x,y) = F_{2,\la}(\om^2 x, \om y).
\end{gathered}
\end{equation}
 If $\la = 0$ or $\la^6 = 1$, then the identities of \eqref{E:threefold2} 
 are not honest, so we shall assume that $\la(\la^6-1) \ne 0$.
 Let $\mathcal F_{1,\la} =\{\{F_{1,\la},F_{2,\la}\}, \{F_{3,\la},F_{4,\la}\}\}$,
 $\mathcal F_{2,\la} = \{\{F_{3,\la},F_{4,\la}\}, \{F_{5,\la},F_{6,\la}\}\}$ and
 $\mathcal F_{3,\la} = \{\{F_{5,\la},F_{6\la}\}, \{F_{1,\la},F_{2,\al}\}\}$.
 Observe that under the scaling  $(x,y) \mapsto
 (\om x, \om^2 y)$, $\mathcal F_{1,\la} \mapsto \mathcal F_{2,\la} \mapsto 
 \mathcal F_{3,\la} \mapsto \mathcal F_{1,\la}$. Thus the three sets of equations 
 $ F_{1,\la}^3 + F_{2,\la}^3 =   F_{3,\la}^3 + F_{4,\la}^3$, 
 $F_{1,\la}^3 + F_{2,\la}^3 =   F_{5,\la}^3 + F_{6,\la}^3$, and
 $F_{3,\la}^3 + F_{4,\la}^3=F_{5,\la}^3 + F_{6,\la}^3$
 are similar to each other. The ``cleanest" versions of the flips come from $\mathcal F_{2,\la}$:
   \begin{equation}\label{E:flippo1}
\begin{gathered}
F_{4,\la}^3(x,y)  - F_{5,\la}^3(x,y) =  -F_{3,\la}^3(x,y) + F_{6,\la}^3(x,y) =p_{2,\la}(x,y) :=\\
((1+\la^6)x^3 +3\la^3 x^2 y - \la^3 y^3)(-\la^3 x^3 + 3\la^3 x y^2 +(1 + \la^6)y^3);
\end{gathered}
\end{equation}
 \begin{equation}\label{E:flippo2}
\begin{gathered}
 F_{4,\la}^3(x,y) - F_{6,\la}^3(x,y) = F_{5,\la}^3(x,y) - F_{3,\la}^3(x,y) = p_{3,\la}(x,y) :=\\
3\sqrt{-3}\ x y (x-y)(x+y)(\la^3 x + y)(x + \la^3 y).
\end{gathered}
\end{equation}

We now present some symmetries of  \eqref{E:threefold2}. Since
$F_{j,-\la}(x,y) = -F_{j,\la}(x,-y)$,  $\mathcal F_{j,-\la}$ is similar to $\mathcal F_{j,\la}$.
Further, $F_{1,\la^{-1}} = -\la^{-4}F_{2,\la}$ and $F_{2,\la^{-1}} = -\la^{-4}F_{1,\la}$, etc., so
 $\mathcal F_{j,\la^{-1}}$ is similar to $\mathcal F_{j,\la}$.
Under the unimodular linear change 
\[(x,y) \mapsto \tfrac 1{\sqrt{1-\la^6}}(\la^3 x + y, -(x + \la^3 y)),
\]
the system of identities
\[
\begin{gathered}
F_{1,\la}^3(x,y) +F_{2,\la}^3(x,y) = F_{3,\la}^3(x,y) + F_{4,\la}^3(x,y)
  =F_{5,\la}^3(x,y) + F_{6,\la}^3(x,y)
\end{gathered}
\]
becomes
\[
\begin{gathered}
F_{7,\la}^3(x,y) + F_{8,\la}^3(x,y) = -F_{3,\la}^3(x,y) + F_{6,\la}^3(x,y) 
=  -F_{5,\la}^3(x,y) + F_{4,\la}^3(x,y); \\
F_7(x,y) = \tfrac 1{1-\la^6}\left( (2\la^3 + \la^9)x^2 + (1 + 5\la^6) x y + (2\la^3 + \la^9) y^2 \right),\\
F_8(x,y) = -\tfrac {\la}{1-\la^6}\left( (1 + 2\la^6)x^2 + (5\la^3 + \la^9) x y + (1 + 2\la^6)y^2 \right). 
\end{gathered}
\]
Of course, $p_1(x,y) \mapsto p_2(x,y)$ under this linear change. This means that each
$\mathcal F_{j,\la}$ is similar to one of its flips.

If we make the linear change $(x,y) \mapsto (x + \om^2 y, x + \om y)$ into 
\eqref{E:threefold2}, we obtain an enhanced version of \eqref{E:Nar}, with a third sum:
\begin{equation}\label{E:Naren}
\begin{gathered}
N_{4,\la}^3(x,y) + N_{3,\la}^3(x,y) = -N_{2,\la}^3(x,y) + N_{1,\la}^3(x,y) \\
= (-p x^2 + (m+2p) x y - p y^2)^3 + (\ell x^2 + (n - 2\ell)x y + \ell y^2)^3.
\end{gathered}
\end{equation}
Upon continuing with the linear change which takes \eqref{E:threefold2} into \eqref{E:flippo1}, we
get a flipped version of \eqref{E:Nar} and another third equal sum, but  with denominators.
A slightly different linear change gives a simple version in
$\mathbb Q(\la)[x,y]$: under $(x,y) \mapsto (x - \sqrt{-3}\ y, x + \sqrt{-3}\ y)$, 
and multiplication by $-1$, \eqref{E:threefold2} becomes
\[
\begin{gathered}
((1 -2 \la^3) x^2 + 3 (1 + 2 \la^3)y^2)^3 +  (\la(2-\la^3) x^2 - 3\la (2 + \la^3) y^2)^3 \\ 
= ((1 + \la^3) x^2 + 6 \la^3 x y + 3 (1-\la^3) y^2)^3 + ( -\la(1 + \la^3) x^2 - 6\la x y + 3 \la(1 -\la^3) y^2)^3 \\
= ((1 + \la^3) x^2 - 6 \la^3 x y + 3 (1-\la^3) y^2)^3 + ( -\la(1 + \la^3) x^2 + 6\la x y + 3 \la(1 -\la^3) y^2)^3. 
\end{gathered}
\]

It is also worth noting that under the  linear change 
$(x,y) \mapsto (x+\tau y, -i(\tau x - y))$, $\tau = \sqrt{1-\la^6} - i\la^3$, (which is invertible
provided $\la^6 \neq 1$), \eqref{E:flippo2} becomes an equation of the shape
$(ax^2 +b xy + a y^2)^3 + (ax^2 -b xy + a y^2)^3 = (rx^2 + sy^2)^3 + (sx^2+ry^2)^3$, and 
$p_{3,\la}$ becomes a multiple of $x^6 +(4\la^6 -1)x^4y^2 + (4\la^6 -1)x^2y^4 + y^6$. This
phenomenon is explored in Theorem \ref{T:tamerep}.

This paper has two parts. The main result of the first part is the following theorem. 
 \begin{theorem}\label{T:cubic}
Every honest identity \eqref{E:funda} for binary sextics is similar to 
some $\mathcal F_{2,\la}$ with $\la(\la^6-1) \neq 0$, up to a possible flip.
\end{theorem}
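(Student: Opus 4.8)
The plan is to translate \eqref{E:funda} into a statement about six points on $\pp^1$ carrying two pairings, and then to show that demanding that \emph{both} pairings arise from sums of two cubes is extremely rigid. First I would set up a dictionary. Factoring a cube-sum, $f_1^3 + f_2^3 = \prod_{j=0}^{2}(f_1 + \om^j f_2)$, realizes any representation $p = f_1^3 + f_2^3$ with $f_1,f_2$ distinct as a factorization $p = q_1q_2q_3$ into pairwise non-proportional quadratics obeying $q_1 + \om q_2 + \om^2 q_3 = 0$; conversely, given any factorization of $p$ into three pairwise non-proportional, linearly \emph{dependent} quadratics, one may rescale the factors (keeping the product fixed) to arrange such an $\om$-relation and then recover $f_1,f_2$ by an inverse discrete Fourier transform. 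Thus representations of $p$ as a sum of two cubes correspond to factorizations $p=q_1q_2q_3$ with the $q_i$ linearly dependent. I would then pass to the Veronese conic $C\subset\pp^2$ of perfect squares: the point of a quadratic with distinct roots is the pole, relative to $C$, of the chord of $C$ through its two roots, so three quadratics are dependent (their points collinear) exactly when the three chords pairing the six roots of $p$ are concurrent, i.e. when the pairing is induced by a Möbius involution of $\pp^1$ preserving the root set $Z$ of $p$. An honest \eqref{E:funda} therefore becomes a sextic $p$ whose six roots $Z$ are preserved, freely and with two distinct pairings, by two involutions $\si_1,\si_2\in\mathrm{PGL}_2$.

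Now I would split on whether $\si_1$ and $\si_2$ share a common pair of $Z$. In the generic case that they share none, set $\tau=\si_1\si_2$. Since $\tau$ preserves the finite set $Z$ it is elliptic of finite order $m$, and after a similarity I may take $\tau:z\mapsto\ze_m z$ and $\si_1:z\mapsto 1/z$. A short orbit count then forces $m=3$: freeness keeps $Z$ away from $\{0,\infty\}$ and from the fixed points of $\si_1,\si_2$, so $\tau$ acts in free orbits of size $m$ with $m\in\{2,3,6\}$, while a single $\tau$-orbit (the case $m=6$) or commuting involutions (the case $m=2$) are each seen to produce a shared pair, contradicting the hypothesis of this case. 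Hence $Z$ consists of two $\tau$-orbits $\{t,\om t,\om^2 t\}$ interchanged by $\si_1$; taking $t=-\la$ gives $Z=\{-\la\om^k\}\cup\{-\la^{-1}\om^{-k}\}$, so $p$ is a scalar multiple of $p_{1,\la}=(\la^6-1)(\la^3x^3+y^3)(x^3+\la^3y^3)$, and $\si_1,\si_2$ are two of the three involutions $z\mapsto\om^j/z$. Reading off the two pairings identifies $\mathcal F$ with $\mathcal F_{2,\la}$ after a similarity.

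If instead $\si_1,\si_2$ share a pair, the common factor makes this a difference-of-cubes (flip) configuration; here I would invoke the linear change, exhibited just before the statement, that carries each $\mathcal F_{j,\la}$ to one of its flips, which converts a shared-pair configuration into a shared-pair-free one and so reduces this case to the previous paragraph — this is the source of the ``possible flip'' in the statement. The main obstacle is exactly the orbit and degeneracy analysis at the heart of the generic case: ruling out every $m\neq 3$ by showing that each alternative forces a shared pair, and disposing of the genuinely degenerate root configurations (repeated roots of $p$, or a root sitting at a fixed point of $\si_1$ or $\si_2$) where the clean involution dictionary breaks down and the honesty hypothesis must be used directly. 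Everything after the configuration is pinned down is the bookkeeping of scalings and powers of $\om$ in the identification of families.
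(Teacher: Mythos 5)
Your dictionary --- representations of $p$ as sums of two cubes correspond to factorizations of $p$ into three pairwise non-proportional, linearly dependent quadratics, hence (via polarity with respect to the Veronese conic) to involutions of $\pp^1$ preserving the root set with a prescribed pairing --- is sound; the first half is essentially Theorem \ref{T:B}. Your route is genuinely different from the paper's, which never factors $p$: there, one diagonalizes $f_1,f_2$ (Corollary \ref{C:evennot}), classifies the tame and wild ways an even sextic is a sum of two non-even cubes (Theorems \ref{T:tamerep}, \ref{wildcase}), shows every honest identity is of Type$(T)$, and proves uniqueness of Type$(T)$ families via $\Phi(f_3,f_4)=(f_2-f_1)^2$ and Proposition \ref{W2}. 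Your dihedral analysis in the shared-pair-free case --- $\tau=\si_1\si_2$ forced to have order $3$, $Z$ a union of two $\tau$-orbits swapped by $\si_1$, recovering $p_{1,\la}$ and the involutions $z\mapsto\om^j/z$ --- is correct and, once completed, buys a clean geometric picture of why exactly three representations appear.

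There are, however, two genuine gaps. First, the shared-pair case is not actually reduced to the generic one. The linear change ``exhibited just before the statement'' carries the already-classified families $\mathcal F_{j,\la}$ to their flips; you cannot apply it to an unknown configuration whose membership in that family is precisely what is being proved. You need either a direct analysis of this case (both involutions swap the two shared roots, so $\tau=\si_1\si_2$ fixes them and acts freely on the remaining four roots in orbits of size $2$ or $4$, leading to the configuration of \eqref{E:flippo2}), or a proof that at least one of the three identities in the flip orbit is shared-pair-free --- which is not automatic, since three shared pairs would only give three linear relations among $f_1,\dots,f_4$ and one must still rule that out. Second, the degenerate configurations you defer are not negligible: the whole dictionary presupposes six distinct roots, and showing that an honest identity forces $p$ to be square-free is real work (compare the paper's Theorem \ref{T:nosquare}, proved \emph{after} Theorem \ref{T:cubic} and requiring a separate argument when the square of a linear form, but not its cube, divides $p$). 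A smaller inaccuracy: for $m=6$ the obstruction is not a shared pair; a single free $\tau$-orbit forces one of $\si_1,\si_2$ to have a fixed point in $Z$, since the pairings $a\mapsto j-a$ and $a\mapsto j-1-a$ modulo $6$ cannot both be fixed-point-free. The conclusion stands, but for a different reason than the one you give.
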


There is a crucial intermediate step in the proof of Theorem \ref{T:cubic}. Any four
binary quadratic forms are linearly dependent, and a given dependence is not affected by a
linear change. We shall say that an honest 
 \eqref{E:funda} is an identity of {\it Type$(T)$} if, perhaps
after a flip, the following two equations hold:
\begin{equation}\label{E:type}
f_1^3 + f_2^3 = f_3^3 + f_4^3, \qquad f_1 + f_2 = T(f_3 + f_4).
\end{equation}
We show (see Lemma \ref{L:notype}) that $T(T^3-1) \neq 0$ in an honest family of
Type$(T)$. Of course, 
the same equation is both Type$(T)$ and Type$(T^{-1})$, and factors of $\om^k$ do not matter. 

The following identities show that \eqref{E:Nar} and \eqref{E:threefold} are both
of Type$(\la^2)$:
\[
\begin{gathered}
N_{2,\la}^3 +N_{4,\la}^3 = N_{1,\la}^3-N_{3,\la}^3, \quad 
N_{2,\la} + N_{4,\la} = \la^2(N_{1,\la} - N_{3,\la}); \\
F_{5,\la}^3 -F_{3,\la}^3 = F_{4,\la}^3 - F_{6,\la}^3,  \quad
F_{5,\la} -F_{3,\la} = \la^2(F_{4,\la} -F_{6,\la}).
\end{gathered}
\]

We prove Theorem \ref{T:cubic} in two stages. After a few technical lemmas,  we show that after 
a linear change, for any honest solution \eqref{E:funda},  
$f_1$ and $f_2$ are both even and that $f_3$
and $f_4$ are not (see Corollary \ref{C:evennot}). We then determine all honest \eqref{E:funda}
in which $f_3,f_4$ are not both even, but $f_3^3 + f_4^3$ is 
(see Theorems \ref{T:tamerep}, \ref{wildcase}) and show that they must be of Type$(T)$ for
some $T$. (Geometrically, this says that any quadratic curve which lies on the surface 
$z_1^3 + z_2^3 + z_3^3 + z_4^3 = 0$ must in fact lie on the intersection of the surface with
a hyperplane $z_i + z_j + T(z_k + z_{\ell}) = 0$) for some permutation of the indices.) 
We finally show that any two honest solutions of \eqref{E:funda}  of Type$(T)$ are 
similar (Theorem \ref{miracle}), and are similar to \eqref{E:threefold2} (or \eqref{E:Nar}) with
$T = \la^2$. We also explore solutions to  \eqref{E:funda} with $f_j \in \mathbb Q[x,y]$. If such
an equation has type $T = \la^2$, then it is clear that $T \in \mathbb Q$; \eqref{E:Naren}
shows that such a solution occurs when $\la \in \mathbb Q$. In Theorem \ref{T:lastminute},
we show that no rational solution can occur when $T < 0$ or $T = 2$. We suspect that
$\sqrt{T} \in \mathbb Q$ is also necessary, but hope to be proved wrong. 

In the second part of the paper, 
we give a complete description of $N(p)$,  the number of different
ways that a binary sextic form is a sum of two cubes. A key result
(see Theorem \ref{T:B})  is that a form $p$ (of degree $3k$) is a sum of two cubes
if and only if $p=h_1h_2h_3$ where the $h_j$'s are distinct, but linearly dependent. There are
two important families of sextics: for $t \in \mathbb C$, let 
\begin{equation}\label{E:basics}
\begin{gathered}
A_t(x,y) := x^6 + tx^4y^2 + tx^2y^4 + y^6, \qquad
B_t(x,y): = x^6 + tx^3y^3 + y^6.
\end{gathered}
\end{equation}
Observe that $p_{1,\la} = \la^3(\la^6-1)B_{\la^3 + \la^{-3}}$, and as we have seen, $p_{3,\la}$
is similar to $A_{4\la^6-1}$. Every $A_t$ and $B_t$ is thus similar to $p_{1,\la}$ or $p_{3,\la}$
for $\la$ with $\la(1-\la^6) \neq 0$ except for $A_{-1},A_{3},B_{\pm 2}$. 

We give a census of $N(p)$ for binary sextics: (i) a binary sextic $p$ is a sum of two cubes (that is,
$N(p) \ge 1$) if and only if $p = \ell^3q$, where $\ell$ is linear and $q$ is a square-free cubic
or $p$ is similar to $q(x^2,y^2)$, where $q$ is a square-free cubic (see Theorem \ref{T:1});
(ii) a binary sextic $p$ has $N(p) =2$ if and only if $p$ is similar to $A_t$ for $t \in \cc$, except
that $N(A_3) = 0$,  $N(A_{-1})= 1$, $N(A_0) = N(A_{15}) = 4$ and $N(A_{-5}) = 6$ (see
Theorem \ref{T:2}); (iii) a binary sextic $p$ has $N(p) = 3$ if and only if $p$ is similar to $B_t$ 
for $t \in \cc$, except that $N(B_{\pm 2}) = 0$, $N(B_0)= 4$ and $N(B_{\pm 5\sqrt{-2}}) = 6$,
(see Theorem \ref{T:3}); (iv) up to similarity, there are two sextics with $N(p) > 3$:
\begin{equation}\label{E:biggies}
\begin{gathered}
Q_1(x,y) = x^6 + y^6\quad \text{or}\quad Q_2(x,y) = xy(x^4-y^4). 
\end{gathered}
\end{equation}
To specific, $Q_1$ is similar to $A_0,A_{15}, B_0$ and $N(Q_1) = 4$ and 
$Q_2$ is similar to $A_{-5}$ and $B_{\pm 5\sqrt{-2}}$ and $N(Q_2) = 6$ (see Theorem \ref{T:46}). 
Section six gives some extra attention to the representations of $Q_1, Q_2$ and their
similarities.

In the final section, we give some different directions that this study might go. We show that the classical
Euler-Binet parameterization to $a^3 + b^3 = c^3 + d^3$ over $\mathbb Q$ 
is also valid over $\mathbb C(x_1,\dots,x_n)$ (see Theorem \ref{T:EB} and Corollary
\ref{C:EB}). We apply the usual ``point addition" of points on the curve 
$x_1^3 + x_2^3 = x_3^3 + x_4^3 = A$
 to show that  $(F_{1,\la},F_{2,\la}) ``+"
(F_{3,\la},F_{4,\la}) = (F_{5,\la},F_{6,\la})$ (see Theorem \ref{T:add}); the denominators disappear.
We show, separately, that a flip of the Euler-Binet parameterization can be added to find
a third representation as a sum of cubes of polynomials (see \eqref{E:curveadd}.)
Finally, we present a few results from the huge literature. 
 We have not found a systematic analysis of \eqref{E:funda} over $\mathbb C[x,y]$, nor
\eqref{E:threefold} nor  any three-fold identities, but mention some of the other quadratic
parameterizations.

This project began 20 years ago when Bruce Berndt gave a seminar at Illinois about
\eqref{E:Ram} and \eqref{E:Nar}. The author foolishly believed that 
an algebraic approach would easily lead to all solutions, and posted a proof-free online set of notes
\cite{Re1} in 2000. Eventually, it has produced this article and an earlier companion
paper studying higher powers, \cite{Re3}.
He wishes to thank his present and former colleagues Michael Bennett, Bruce Berndt, 
 Nigel Boston, Dan Grayson and Jeremy Rouse for helpful conversations, and
Andrew Bremner, Noam Elkies and Michael Hirschhorn for encouraging and
useful emails over the years.

\section{Preliminary lemmas}

We begin with several old simple lemmas, giving proofs for completeness.
The first is a special case of, for example,  \cite[Thm.1.1]{Re3}.
\begin{lemma}\label{L:no2sums}
If $\{\al_k x + \be_k y)\}, 1 \le k \le r \le 4$ are pairwise distinct linear forms,
then $\{(\al_k x + \be_k y)^3\}$ is linearly independent. In particular, if 
\eqref{E:funda} holds and $\{h_j\}$ is honest, then it cannot be the case that each $h_j$ is even.
\end{lemma}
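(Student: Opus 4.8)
The plan is to establish the general linear-independence statement first and then deduce the assertion about even forms by a monomial substitution.

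For the first part, I would show that a nontrivial dependence $\sum_{k=1}^r c_k(\al_k x + \be_k y)^3 = 0$ is impossible when the points $[\al_k:\be_k] \in \pp^1$ are distinct and $r \le 4$. The cleanest self-contained route uses annihilating differential operators. Writing $\ell_m = \al_m x + \be_m y$, set $D_m = \be_m \partial_x - \al_m \partial_y$; then $D_m \ell_m = 0$, while $D_m \ell_k = \be_m \al_k - \al_m \be_k =: [m,k]$, which is a nonzero scalar precisely because $[\al_m:\be_m] \ne [\al_k:\be_k]$. Fixing an index $k_0$ and applying the composite $\prod_{m \ne k_0} D_m$ (at most three operators, since $r \le 4$) to the dependence kills every term except the $k_0$ one, leaving $c_{k_0}$ times a nonzero constant times $\prod_{m \ne k_0}[m,k_0]$ times $\ell_{k_0}^{\,3-(r-1)}$. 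Since $3-(r-1) \ge 0$ and each $[m,k_0] \ne 0$, this is a nonzero multiple of $c_{k_0}\,\ell_{k_0}^{\,3-(r-1)}$, forcing $c_{k_0}=0$; as $k_0$ is arbitrary, the cubes are independent. (Equivalently, $[\al:\be]\mapsto(\al x+\be y)^3$ parameterizes a rational normal curve in $\pp(\cc[x,y]_3)\cong\pp^3$, any four of whose points are in linearly general position; the operator computation is simply an explicit Wronskian version of this.)

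For the second part, suppose \eqref{E:funda} is honest with all four $h_j$ even, say $h_j = a_j x^2 + b_j y^2$. I would substitute $u=x^2$, $v=y^2$, so that $h_j = L_j(x^2,y^2)$ with $L_j := a_j u + b_j v$ linear in $(u,v)$, whence $h_j^3 = L_j^3(x^2,y^2)$. The identity $h_1^3 + h_2^3 - h_3^3 - h_4^3 = 0$ then reads $P(x^2,y^2)=0$, where $P := L_1^3 + L_2^3 - L_3^3 - L_4^3 \in \cc[u,v]$. Because the substitution $u\mapsto x^2,\ v\mapsto y^2$ is injective on monomials, $P(x^2,y^2)=0$ forces $P=0$, i.e.\ a nontrivial dependence (with coefficients $1,1,-1,-1$) among the $L_j^3$. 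Honesty makes the $h_j$ pairwise non-proportional, equivalently the four linear forms $L_j$ pairwise distinct in $(u,v)$; but then their cubes are linearly independent by the first part with $r=4$ (one more than the degree), contradicting the dependence. Hence the $h_j$ cannot all be even.

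The argument is entirely elementary, and the only point demanding care is the bookkeeping in the first part: one must check that applying $r-1 \le 3$ operators leaves a genuinely nonzero polynomial, which holds because the exponent $3-(r-1)$ stays nonnegative and the scalar factors $[m,k_0]$ are nonzero exactly when the forms are projectively distinct. The reduction via $u=x^2$, $v=y^2$ in the second part is then purely formal.
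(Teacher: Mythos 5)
Your argument is correct, and the two halves differ from the paper's proof in different degrees. For the linear-independence statement the paper completes the list to $r=4$ and writes down the coefficient matrix of the cubes in the basis $\{\binom{3}{i}x^{3-i}y^i\}$, which is Vandermonde with determinant $\prod_{j<k}(\al_j\be_k-\al_k\be_j)\neq 0$ exactly because the forms are pairwise non-proportional; you instead hit a putative dependence with the annihilating operators $D_m=\be_m\partial_x-\al_m\partial_y$. These are two standard proofs of the same fact (any $d+1$ points on the degree-$d$ rational normal curve are in linearly general position): the determinant computation is shorter on the page, while your apolarity argument avoids the completion step and makes transparent why the bound $r\le 4=\deg+1$ is precisely what is needed, since the surviving term is a nonzero multiple of $c_{k_0}\ell_{k_0}^{4-r}$. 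For the second assertion your route is essentially the paper's: the paper passes by ``comparing coefficients'' from the even identity to an identity among cubes of linear forms in $x,y$, and then observes that $0=p-p$ would be a nontrivial dependence among four distinct cubes; your substitution $u=x^2$, $v=y^2$, together with the remark that this substitution is injective on monomials, is the same reduction made explicit, and landing directly on the dependence $L_1^3+L_2^3-L_3^3-L_4^3=0$ among four pairwise distinct linear forms is a clean way to finish.
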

\begin{proof}
If $r < 4$, add more distinct linear forms to assume that $r=4$. The matrix of
$\{(\al_j x + \be_j y)^3)\}$ with respect to the basis $\{\binom 3i x^{3-i}y^i\}$ is 
$[\al_j^{3-i}\be_j^i]$, which is Vandermonde, with determinant  
$\prod_{1 \le j < k \le 4} (\al_j\be_k - \al_k\be_j)$.
This determinant is non-zero because each pair of linear forms is distinct. 

Suppose $p$ is a cubic form and
\begin{equation}\label{E:4cubics}
p(x,y) = (\al_1 x + \be_1 y)^3 + (\al_2 x + \be_2 y)^3 = (\al_3 x + \be_3 y)^3 + (\al_4 x + \be_4 y)^3.
\end{equation}
Then $0 = p-p$ gives a formal linear dependence of four cubics, which must result from pairwise 
cancellation; that is, the original representations were the same.

Finally, by comparing coefficients, the equation
\[
(\al_1 x^2 + \be_1 y)^3 + (\al_2 x^2 + \be_2 y^2)^3 = (\al_3 x^2 + \be_3 y^2)^3 +
 (\al_4 x^2 + \be_4 y^2)^3.
\]
implies \eqref{E:4cubics}, and so cannot happen in an honest family.
 \end{proof}
 \begin{lemma}\label{L:linin}
Suppose $g_1, g_2 \in \cc[x_1,\dots,x_n]$ are distinct forms. Then for $d \ge 2$, the set
$\{g_1^{d-k}g_2^k:0 \le k \le d\}$ is linearly independent.
\end{lemma}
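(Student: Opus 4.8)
The plan is to turn a hypothetical linear dependence into a polynomial identity that factors into linear pieces, and then exploit the fact that $\cc[x_1,\dots,x_n]$ is an integral domain. Suppose, toward a contradiction, that there are constants $c_0,\dots,c_d \in \cc$, not all zero, with $\sum_{k=0}^d c_k g_1^{d-k}g_2^k = 0$. Since $g_1,g_2$ are distinct, neither is the zero form (the zero form is proportional to everything), so both are nonzero elements of the domain $\cc[x_1,\dots,x_n]$.

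First I would record the single-variable polynomial $P(X)=\sum_{k=0}^d c_k X^k \in \cc[X]$, which is nonzero by assumption; let $e=\deg P$ and factor it over the algebraically closed field $\cc$ as $P(X)=c_e\prod_{i=1}^e (X-\al_i)$. The associated binary form $\tilde P(U,V)=\sum_{k=0}^e c_k U^{e-k}V^k = U^e P(V/U)$ then factors as $\tilde P(U,V)=c_e\prod_{i=1}^e (V-\al_i U)$. Substituting $U=g_1$, $V=g_2$ and peeling off the lower powers of $g_1$, the original relation reads $c_e\, g_1^{d-e}\prod_{i=1}^e (g_2-\al_i g_1)=0$ in $\cc[x_1,\dots,x_n]$. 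Because this ring is an integral domain, $c_e\ne 0$, and $g_1\ne 0$, one of the factors must vanish: $g_2=\al_i g_1$ for some $i$. That makes $g_1$ and $g_2$ proportional, contradicting the hypothesis, and this contradiction establishes the linear independence.

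The argument is essentially routine; the only points that need care are the bookkeeping in the factorization step — ensuring $e\le d$ is handled by the factor $g_1^{d-e}$ — and the use of ``distinct'' in the strong sense of non-proportionality, so that both forms are nonzero and no $g_2-\al_i g_1$ can vanish. An equivalent and perhaps slicker phrasing avoids homogenization entirely: pass to the fraction field $\cc(x_1,\dots,x_n)$, set $t=g_2/g_1$, and observe that the relation exhibits $t$ as a root of $P$, hence algebraic over $\cc$; since $\cc$ is algebraically closed it has no proper finite extension, so $t\in\cc$, again forcing proportionality. I would present the factorization version, since it keeps the entire argument inside the polynomial ring and makes the role of the integral-domain property transparent.
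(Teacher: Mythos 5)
Your proof is correct and follows essentially the same route as the paper: the paper factors the binary form $\sum_k \la_k x^{d-k}y^k$ into linear factors, substitutes $(g_1,g_2)$, and uses the integral-domain property to force some $\al_j g_1 + \be_j g_2 = 0$, contradicting distinctness. Your version merely adds explicit bookkeeping (the $g_1^{d-e}$ factor for a degree drop) that the paper's homogeneous factorization handles implicitly.
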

\begin{proof}
Suppose $\sum_{k=0}^d \la_kg_1^{d-k}g_2^k = 0$ for a non-zero choice of $\{\la_k\}$. Then
 \[
 \sum_{k=0}^d \la_kx^{d-k}y^k = \prod_{j=1}^d(\al_j x + \be_j y) \implies
\prod_{j=1}^d(\al_j g_1 + \be_j g_2) = 0;
\]
thus $\al_j g_1 + \be_j g_2 = 0$ for some $j$, violating the distinctness hypothesis.
\end{proof}
 
 We need an old fact about simultaneous
 diagonalization; there doesn't seem to be a standard easy-to-find modern proof, a different proof
 is shown in \cite[Thm.3.2]{Re3}.
 
\begin{theorem}\label{T:diag}
If $f_1(x,y)$ and $f_2(x,y)$ are relatively prime  quadratic forms,
then there is a linear change $M$ so that $f_1\circ M$ and $f_2\circ M$ are both even. 
\end{theorem}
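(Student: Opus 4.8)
The plan is to prove Theorem~\ref{T:diag}: two relatively prime quadratic forms $f_1,f_2 \in \cc[x,y]$ can be simultaneously made even by a linear change. Recall an \emph{even} binary form is one with no odd-degree monomials; for a quadratic form $ax^2+bxy+cy^2$ being even means precisely that the $xy$-coefficient vanishes, i.e. the form is diagonal in the variables $x^2,y^2$. So the statement is really a simultaneous diagonalization: I want $M$ so that both $f_1\circ M$ and $f_2\circ M$ are linear combinations of $x^2$ and $y^2$ alone.

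My first approach would be the geometric one. Each quadratic form $f_j$ vanishes on two points of $\pp^1$ (its roots, counted with multiplicity), and evenness of a quadratic $ax^2+bxy+cy^2$ is equivalent to its two roots being symmetric about the origin, i.e. of the form $\{[\al:\be],[\al:-\be]\}$, equivalently the pair $\{[1:t],[1:-t]\}$ or $\{[1:0],[0:1]\}$. A linear change $M$ acts on $\pp^1$ as a M\"obius transformation. So the goal becomes: find a M\"obius transformation sending the unordered root-pair of $f_1$ to a pair symmetric under $y\mapsto -y$ \emph{and} the root-pair of $f_2$ to such a symmetric pair. The relative primeness hypothesis guarantees that $f_1$ and $f_2$ have no common root, so together they determine four points on $\pp^1$ (four, if we also know each $f_j$ is squarefree; I should check the repeated-root case separately). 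The symmetry $y\mapsto -y$ is the involution $[x:y]\mapsto[x:-y]$ fixing $[1:0]$ and $[0:1]$; a pair is symmetric under it exactly when it is an orbit or a fixed pair. The natural plan is therefore: choose $M$ to place the two roots of $f_1$ at $\pm$ symmetric positions, then argue the remaining freedom in $M$ lets me symmetrize $f_2$ as well.

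The cleaner and more robust route, which I would actually write up, is the classical pencil/eigenvalue argument. Associate to each quadratic form its symmetric $2\times 2$ matrix: $f_1 \leftrightarrow A$, $f_2 \leftrightarrow B$. A linear change $x \mapsto Mx$ sends $A \mapsto M^{T}AM$, so I seek $M$ with $M^{T}AM$ and $M^{T}BM$ both diagonal. Consider the pencil $A - tB$. First handle the degenerate case where $A$ (say) is invertible: form $A^{-1}B$, and let $M$ be built from an eigenbasis of $A^{-1}B$. The key point is that with respect to such a basis both forms diagonalize simultaneously \emph{provided} $A^{-1}B$ has two distinct eigenvalues (equivalently $\det(A-tB)$ has distinct roots), and this distinctness is exactly where I expect the main obstacle to hide: I must show that $\gcd(f_1,f_2)=1$ forces the pencil $\det(A-tB)$ to be a nonzero polynomial in $t$ with distinct roots, so that no repeated eigenvalue spoils diagonalizability. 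If the eigenvalues coincide, $A^{-1}B$ could be a nontrivial Jordan block and simultaneous diagonalization would fail --- but a repeated root of $\det(A-tB)$ means some $A - t_0 B$ is a perfect square (rank-one) as a quadratic form, and I would show that this, combined with the two eigenvalues coinciding, forces $f_1$ and $f_2$ to share a linear factor, contradicting coprimality.

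The main obstacle, then, is precisely the non-generic cases, and I would organize the proof around clearing them. The plan is: (1) reduce to the case where one of the forms, after a preliminary scaling, is $x^2+y^2$ or some convenient nondegenerate form (using that a nonzero quadratic can be put in a normal form by a linear change); (2) compute the pencil and show coprimality gives distinct roots of $\det(A-tB)$, hence distinct eigenvalues of $A^{-1}B$ and genuine diagonalizability; (3) assemble the eigenvectors into $M$ and verify directly that both $f_j\circ M$ come out even. I would also need to dispose of the sub-case where $f_1$ or $f_2$ is itself a perfect square of a linear form (a repeated root): there $f_j = \ell^2$, and I can first move $\ell$ to $x$ (or $y$), making $f_j = x^2$ already even, then use the remaining one-parameter freedom of scalings fixing $x^2$ to symmetrize the other form, again invoking $\gcd=1$ to ensure the other form is not also a multiple of $x^2$. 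Working over $\cc$ is essential throughout, since it guarantees the eigenvalues and square roots needed to build $M$ always exist.
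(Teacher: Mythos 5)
Your pencil/eigenvalue argument is essentially the paper's proof in matrix clothing: the paper also normalizes one form (to $x^2$ or $x^2+y^2$), considers the pencil $\la f_1+f_2$, and shows via a discriminant computation that coprimality forces \emph{two distinct} degenerate (rank-one) members $\ell_1^2,\ell_2^2$, which then serve as the new coordinate squares --- exactly your ``distinct eigenvalues of $A^{-1}B$'' condition, since the degenerate pencil members are cut out by $\det(\la A+B)=0$. So the core of your plan is correct and matches the paper; the one standard fact you leave implicit (that eigenvectors of $A^{-1}B$ for distinct eigenvalues are simultaneously $A$- and $B$-orthogonal, so $M=[v_1\,|\,v_2]$ congruence-diagonalizes both) is routine.

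One detail in your degenerate subcase is wrong as written: after moving $f_j=\ell^2$ to $x^2$, a \emph{scaling} $(x,y)\mapsto(\al x,\de y)$ (diagonal, in this paper's terminology) can never kill the $xy$-term of the other form. What you need is the shear $(x,y)\mapsto(x,\ga x+y)$ with $\ga=-b/(2c)$, which is legitimate because coprimality with $x^2$ forces the $y^2$-coefficient $c$ to be nonzero; equivalently, you can sidestep the subcase entirely by always taking $A$ to be a nondegenerate member of the pair (possible unless both forms are squares of linear forms, in which case coprimality makes them $\ell_1^2,\ell_2^2$ with $\ell_1,\ell_2$ independent and the result is immediate --- this is how the paper handles it, by ordering the forms by rank).
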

\begin{proof}
We may assume $rank(f_1) \ge
rank(f_2)\ge 1$, and after a preliminary linear change, 
take $f_1(x,y) = x^2$ or $x^2 + y^2$. In the first case,
$rank(f_2) = 1$, so $f_2 = \ell^2$ for a linear $\ell$ which can become $y$ after a linear 
change, so $(f_1,f_2) \to (x^2,y^2)$. Otherwise,  we have $f_1(x,y) = x^2 + y^2$ and 
$f_2(x,y) = ax^2 + 2bxy + cy^2$. Since $f_1$ and $f_2$ are relatively prime, $x\pm iy$ is 
not a factor of $f_2$ and so $a \pm 2bi - c \neq0$.

The quadratic $\la f_1 + f_2$ has discriminant 
\[
\begin{gathered}
\Delta(\la)= 4(\la + a)(\la + c) - (2b)^2  = 4(\la^2 +(a+c)\la + (ac-b^2); \\
Disc(\Delta(\la)) = (a+c)^2 - 4(ac-b^2) = (a+2bi -c)(a-2bi-c) \neq 0.
\end{gathered}
\]
Thus there exist $\la_1 \neq \la_2$ so that each quadratic $\la_j f_1 + f_2$ is perfect square; that 
is, $\la_j f_1 + f_2 = \ell_j^2$. This implies that both $f_1$ and $f_2$ are linear 
combinations of $\ell_1^2, \ell_2^2$. A linear change taking $(\ell_1,\ell_2) \mapsto (x,y)$
completes the diagonalization.
\end{proof}

In order to apply Theorem \ref{T:diag}, we need a small technical lemma.
\begin{lemma}\label{L:relprime}
Suppose $p = f_1^3 + f_2^3 = f_3^3 + f_4^3$ for quadratic $f_1,f_2$ and $f_1$ and $f_2$
have a non-trivial common factor. 
Then $\{f_1^3,f_2^3\} = \{f_3^3,f_4^3\}$. Thus in any honest instance of
 \eqref{E:funda}, the $f_j$'s are pairwise relatively prime.
 \end{lemma}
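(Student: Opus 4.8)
The plan is to suppose $f_1$ and $f_2$ share a nontrivial common factor and derive that the representation $\{f_3^3,f_4^3\}$ coincides with $\{f_1^3,f_2^3\}$. Since $\deg f_1 = \deg f_2 = 2$, a common factor must be a common \emph{linear} factor $\ell$; I would write $f_1 = \ell g_1$ and $f_2 = \ell g_2$ with $g_1,g_2$ linear. Then the common sum factors as
\[
p = f_1^3 + f_2^3 = \ell^3(g_1^3 + g_2^3),
\]
so $\ell^3 \mid p$, which forces $\ell^3$ to divide $f_3^3 + f_4^3$ as well. The goal is to show $\ell$ is a common factor of $f_3$ and $f_4$, which reduces matters to the cubic case already settled in Lemma~\ref{L:no2sums}.

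The key step is to establish that $\ell \mid f_3$ and $\ell \mid f_4$. After a linear change I may normalize $\ell = x$, so $x^3 \mid p$. Now $p = f_3^3 + f_4^3$; I would argue via the factorization $a^3 + b^3 = (a+b)(a+\om b)(a + \om^2 b)$ applied to $a = f_3,\ b = f_4$, giving
\[
p = (f_3 + f_4)(f_3 + \om f_4)(f_3 + \om^2 f_4).
\]
Each of the three quadratic factors is a product of two linear forms, so $p$ is a product of six linear forms (counted with multiplicity), and $x^3 \mid p$ means $x$ appears among these six with multiplicity at least three. The main obstacle is the bookkeeping here: a priori the three powers of $x$ could be distributed as $(2,1,0)$ across the three factors $f_3 + \om^k f_4$, not forcing each to be divisible by $x$. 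I would resolve this by observing that if $x \mid (f_3 + \om^i f_4)$ and $x \mid (f_3 + \om^j f_4)$ for two distinct $i,j$, then subtracting shows $x \mid f_3$ and $x \mid f_4$; so the only alternative to the desired conclusion is that all three powers of $x$ lie in a \emph{single} factor, i.e. $x^3 \mid (f_3 + \om^k f_4)$ for one $k$. But that factor is a quadratic, so $x^3$ cannot divide it unless it vanishes, contradicting distinctness (which holds since the family is honest). Hence $x \mid f_3$ and $x \mid f_4$ as claimed.

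Once $\ell = x$ divides all four forms, I write $f_j = x\, g_j$ for linear forms $g_j$ (for $j = 1,2,3,4$), and the equation becomes
\[
x^3(g_1^3 + g_2^3) = x^3(g_3^3 + g_4^3),
\]
so $g_1^3 + g_2^3 = g_3^3 + g_4^3$ as an identity among cubes of \emph{linear} forms. By the cubic case of Lemma~\ref{L:no2sums} (the paragraph around \eqref{E:4cubics}), any such equation forces $\{g_1^3, g_2^3\} = \{g_3^3, g_4^3\}$, and multiplying back by $x^3$ yields $\{f_1^3, f_2^3\} = \{f_3^3, f_4^3\}$, which is the first assertion. Since a linear change preserves both the equation and the relation $\{f_1^3,f_2^3\} = \{f_3^3,f_4^3\}$, undoing the normalization gives the result for the original $\ell$. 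The final sentence of the lemma is then immediate: in an honest family the two representations are distinct as unordered pairs, so the equality $\{f_1^3,f_2^3\} = \{f_3^3,f_4^3\}$ cannot hold; therefore no pair $f_i, f_j$ (within or across the two representations) can share a common factor, i.e. the $f_j$ are pairwise relatively prime. I would double-check the cross-pair case ($f_1$ versus $f_3$, say) by applying the same argument after first invoking a flip, so that any two of the four forms play the role of $f_1,f_2$ in \eqref{E:funda}.
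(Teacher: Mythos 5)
Your proof is correct and follows essentially the same route as the paper: deduce $\ell^3 \mid (f_3+f_4)(f_3+\om f_4)(f_3+\om^2 f_4)$, observe that since the factors are quadratics $\ell$ must divide at least two of them and hence both $f_3$ and $f_4$, then cancel $\ell^3$ and invoke the linear-forms case of Lemma~\ref{L:no2sums}. Your explicit treatment of the cross-pairs via a flip is a point the paper leaves implicit, and is a worthwhile addition.
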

 \begin{proof}
 Suppose $\ell$ is a linear form and $f_1 = \ell \ell_1$ and $f_2 = \ell \ell_2$. Then
 \[
 \ell^3\ |\ f_3^3 + f_4^3 = (f_3+f_4)(f_3 + \om f_4) (f_3 + \om^2 f_4).
 \]
 Since the three factors on the right are quadratic, $\ell$ must divide at least two of them; it
 follows that $\ell$ divides both $f_3$ and $f_4$. By writing
 $f_3 = \ell \ell_3$ and $f_4 = \ell \ell_4$, we see that $\ell_1^3 + \ell_2^3 = \ell_3^3 + \ell_4^3$, 
 and
 since the original equation was honest, the $\ell_j$'s are pairwise distinct. This is impossible
 by Lemma  \ref{L:no2sums}.
  \end{proof}
  
  Putting the results of this section together, we have the following corollary.
  \begin{corollary}\label{C:evennot}
  If an honest \eqref{E:funda} holds, then after a linear change, $f_1$ and
  $f_2$ are even, (and hence so is $p$), but $f_3$ and $f_4$ are not both even; thus
  \begin{equation}\label{E:noteven}
  (ax^2 + bxy + cy^2)^3 + (dx^2 + exy + fy^2)^3
  \end{equation}
  is even, where $(b,e) \neq (0,0)$. 
  \end{corollary}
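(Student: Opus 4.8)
The plan is to combine the three lemmas already proved in this section. The statement of Corollary~\ref{C:evennot} asserts two things about an honest family \eqref{E:funda}: that after a linear change $f_1,f_2$ can be made even, and that $f_3,f_4$ cannot then both be even. First I would invoke Lemma~\ref{L:relprime}: since the family is honest, the $f_j$'s are pairwise relatively prime, so in particular $f_1$ and $f_2$ are relatively prime. This is exactly the hypothesis needed for Theorem~\ref{T:diag}, which produces a linear change $M$ making both $f_1\circ M$ and $f_2\circ M$ even. Applying $M$ throughout \eqref{E:funda} gives an equivalent honest identity in which $f_1,f_2$ are even; since $p=f_1^3+f_2^3$ is then a polynomial in $x^2,y^2$, it too is even. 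This handles the first assertion.

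For the second assertion I would argue by contradiction. Suppose that after this change $f_3$ and $f_4$ were \emph{also} both even. Then all four forms $f_1,f_2,f_3,f_4$ would be even, i.e. each is a quadratic in $x^2$ and $y^2$ of the shape $\al x^2+\be y^2$. Writing $f_j = \al_j x^2 + \be_j y^2$, the identity \eqref{E:funda} becomes precisely
\[
(\al_1 x^2 + \be_1 y^2)^3 + (\al_2 x^2 + \be_2 y^2)^3 = (\al_3 x^2 + \be_3 y^2)^3 + (\al_4 x^2 + \be_4 y^2)^3,
\]
which is exactly the situation ruled out in the final sentence of Lemma~\ref{L:no2sums}: it forces the corresponding equality of cubes of linear forms $(\al_j x+\be_j y)^3$, contradicting honesty (the $f_j$ would fail to be pairwise distinct). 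Hence $f_3,f_4$ are not both even.

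To close, I would simply restate \eqref{E:noteven}: writing $f_3 = ax^2+bxy+cy^2$ and $f_4 = dx^2+exy+fy^2$, the failure of both to be even means at least one of the coefficients $b,e$ is nonzero, i.e. $(b,e)\neq(0,0)$, while the sum $f_3^3+f_4^3 = f_1^3+f_2^3 = p$ is even as established above.

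The steps here are each short, and the only real content is recognizing which earlier result supplies which piece. The main obstacle, if any, is purely one of bookkeeping: verifying that the even-ness conclusion of Theorem~\ref{T:diag} is preserved and interacts correctly with the honesty of the family after the change of variables, and that the reduction in the contradiction step lands exactly on the hypothesis of Lemma~\ref{L:no2sums} rather than on its weaker conclusion. No genuinely new computation is needed.
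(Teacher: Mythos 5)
Your argument is exactly the combination the paper intends: the corollary is stated with no separate proof beyond "putting the results of this section together," and the pieces you assemble (Lemma~\ref{L:relprime} for relative primality, Theorem~\ref{T:diag} for the diagonalizing change of variables, and the final sentence of Lemma~\ref{L:no2sums} to rule out all four forms being even) are precisely the intended ones. The proposal is correct and follows the paper's approach.
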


  \section{Even sums of the cubes of non-even quadratic forms}
  
  Our goal in this section is to show that every quadratic solution
  to \eqref{E:funda} is a family of Type$(T)$ for some $T$.
  
  How can it happen that $f_3^3+ f_4^3$ is even when at least one of $\{f_3,f_4\}$ is not even? 
An obvious case is 
\begin{equation}\label{E:tame}
f_3(x,y) = a x^2 + b x y + c y^2, \qquad f_4(x,y) =  a x^2 - b x y + c y^2,
\end{equation}
which, as in \cite{Re3}, we call the {\it tame} case; otherwise we are in the
{\it wild} case. 
If $a = 0$, then it follows from \eqref{E:tame} that $y$ divides
$f_3$ and $f_4$, and by 
Lemma \ref{L:relprime}, this cannot happen, so $a \neq 0$. Similarly, $c \neq 0$. 
Thus, we may scale $x$ and $y$ and assume that $f_3,f_4$ are $x^2 \pm \ga xy + y^2$ for
some $\ga \neq 0$.
\begin{theorem}\label{T:tamerep}
The tame case occurs in a family of Type$((1 + \frac 34 \ga^2)^{1/3})$.
\end{theorem}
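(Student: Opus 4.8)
The plan is to compute the common sextic explicitly, observe that its even representation is forced to have a symmetric shape, and then extract $T$ by comparing a couple of coefficients. First I would evaluate the tame sum: using $f_3 + f_4 = 2(x^2+y^2)$ and $f_3 f_4 = (x^2+y^2)^2 - \ga^2 x^2 y^2$, the identity $f_3^3+f_4^3 = (f_3+f_4)\bigl((f_3+f_4)^2 - 3f_3f_4\bigr)$ gives
\[
p := f_3^3 + f_4^3 = 2(x^2+y^2)\bigl((x^2+y^2)^2 + 3\ga^2 x^2 y^2\bigr),
\]
which is even, in agreement with Corollary \ref{C:evennot}. Substituting $X = x^2$ and $Y = y^2$ presents $p$ as the cubic $2(X+Y)(X^2 + (2+3\ga^2)XY + Y^2)$ in $\cc[X,Y]$, and each even summand $\al x^2 + \be y^2$ becomes a linear form $\al X + \be Y$.

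Next I would determine the shape of $f_1, f_2$. By Corollary \ref{C:evennot} they are even, hence of the form $\al x^2 + \be y^2$, so in the $X,Y$ variables they exhibit the cubic above as a sum of two cubes of distinct linear forms; such a representation is unique by Lemma \ref{L:no2sums}. Because $p$ is symmetric under $X \leftrightarrow Y$, uniqueness forces the pair $\{f_1, f_2\}$ to be interchanged by this swap. They cannot each be individually symmetric, since that would make $p$ a scalar multiple of $(X+Y)^3$, impossible once $\ga \neq 0$. Hence, after relabeling, $f_1 = \al x^2 + \be y^2$ and $f_2 = \be x^2 + \al y^2$. This symmetry-plus-uniqueness step is the one place that needs care; the remainder is bookkeeping.

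Finally I would compare coefficients in $f_1^3 + f_2^3 = 2(X+Y)(X^2 + (2+3\ga^2)XY + Y^2)$. Writing $P = \al+\be$ and $Q = \al\be$, the $X^3$-coefficient gives $\al^3 + \be^3 = P^3 - 3PQ = 2$, while the $X^2Y$-coefficient gives $3\al\be(\al+\be) = 3PQ = 2(3 + 3\ga^2)$, so $PQ = 2(1+\ga^2)$ and therefore $P^3 = 2 + 3PQ = 8 + 6\ga^2$. Since $f_1 + f_2 = P(x^2+y^2)$ and $f_3 + f_4 = 2(x^2+y^2)$, the family satisfies $f_1 + f_2 = T(f_3+f_4)$ with
\[
T = \frac{P}{2} = \Bigl(\frac{8+6\ga^2}{8}\Bigr)^{1/3} = \Bigl(1 + \tfrac34 \ga^2\Bigr)^{1/3},
\]
and no flip is required. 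Honesty is automatic: $f_1, f_2$ are even while $f_3, f_4$ are not, and $\ga \neq 0$ ensures both $f_3 \neq f_4$ and $\al \neq \be$.
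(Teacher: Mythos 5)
Your argument is correct and arrives at exactly the same system the paper solves: $\al^3+\be^3=2$, $3\al\be(\al+\be)=6(1+\ga^2)$, hence $(\al+\be)^3=8+6\ga^2$ and $T=\tfrac{\al+\be}{2}=(1+\tfrac34\ga^2)^{1/3}$, matching \eqref{E:step1}. The one substantive difference is how the symmetric shape $\{\al x^2+\be y^2,\ \be x^2+\al y^2\}$ of the even pair is obtained. The paper posits this shape, solves for $r_\ga,s_\ga$, checks the roots are distinct, and then uses the uniqueness from Lemma \ref{L:no2sums} to conclude that the representation it has constructed is the one guaranteed by Corollary \ref{C:evennot}. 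You go the other way: you take the even representation as given and force the symmetric shape from the $X\leftrightarrow Y$ invariance of $p$ plus uniqueness. That is a legitimate and arguably cleaner justification of the ansatz, but note what the case analysis actually requires: the swap either exchanges the two cubes $\ell_1^3,\ell_2^3$ or fixes each one, and ``fixes $\ell^3$'' means $\ell(Y,X)=\om^k\ell(X,Y)$, which forces $k=0$ and $\ell$ proportional to $X+Y$; only then does your ``$p$ would be a multiple of $(X+Y)^3$'' conclusion follow. Also, the exchanged form is only determined up to a factor $\om^k$ on one summand, which is harmless here since Type is defined modulo such factors, but it is worth saying. Your claim that $\ga\neq0$ gives $\al\neq\be$ is true but not automatic; it is the one-line computation $(\al-\be)^2=P^2-4Q=\frac{P^3-4PQ}{P}=\frac{-2\ga^2}{P}$, which the paper records explicitly.

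The genuine omission is the value $\ga^2=-\tfrac43$, which the paper excludes explicitly (``so we take $\ga^2\neq-\tfrac43$''). There your equations give $P^3=8+6\ga^2=0$ but $PQ=2(1+\ga^2)=-\tfrac23\neq0$, which is inconsistent: no even representation exists, the tame pair with this $\ga$ lies in no honest family (this is the sextic $A_{-1}$, with $N(A_{-1})=1$), and your closing formula would assert Type$(0)$, contradicting Lemma \ref{L:notype}. Your derivation does silently detect this (it reaches a contradiction), but the exclusion should be stated for the theorem to read correctly. With that caveat added, the proof is complete.
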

\begin{proof}
Observe that 
\begin{equation}\label{E:tameo}
\begin{gathered}
(x^2 + \ga x y + y^2)^3 + (x^2 - \ga x y + y^2)^3  = \\
2(x^6 + 3(1+\ga^2)x^4y^2 + 3(1+\ga^2)x^2y^4 + y^6)  = 2A_{3(1+\ga^2)}(x,y).
\end{gathered}
\end{equation}
Let $\{f_3(x,y),f_4(x,y)\} = \{x^2 \pm \ga x y + y^2\}$. Honesty requires $\ga \neq 0$. By
hypothesis, $ 2A_{3(1+\ga^2)}$ is a sum of cubes of two even quadratics in a unique way
by Lemma \ref{L:no2sums}.

Note that \eqref{E:tameo} implies that
\begin{equation}\label{E:step1}
\begin{gathered}
2(x^6 + 3(1+\ga^2)x^4y^2 + 3(1+\ga^2)x^2y^4 + y^6)  = (r_\ga x^2 + s_\ga y^2)^3 + (s_\ga x^2 + r_\ga y^2)^3
\\ \iff r_\ga^3 + s_\ga^3 = 2, \quad 3r_\ga^2s_\ga+ 3r_\ga s_\ga^2 = 3r_\ga s_\ga(r_\ga+s_\ga)= 
6(1+\ga^2)\\ \implies (r_\ga+ s_\ga)^3 = 8 + 6 \ga^2.
\end{gathered}
\end{equation}
Observe that if $\ga^2 =  -\frac 43$, then $0 = (r_\ga + s_\ga)^3$, so $s_\ga = -r_\ga$ and
 $ r_\ga^3 + s_\ga^3 = 0$, so we take $\ga^2 \neq  -\frac 43$.
Up to $(r_\ga,s_\ga) \mapsto \om^k(r_\ga,s_\ga)$ and a choice of cube root,  
\[
 r_\ga + s_\ga = (8 + 6\ga^2)^{1/3} \neq 0 \implies
r_\ga s_\ga = \frac{2(1+\ga^2)}{(8 + 6\ga^2)^{1/3}},
\]
and so $r_\ga$ and $s_\ga$ are the roots of the quadratic equation
\[
\begin{gathered}
X^2 -(8 + 6\ga^2)^{1/3} X + \frac{2(1 + \ga^2)}{(8+6\ga^2)^{1/3}} = 0.
\end{gathered}
\]
Let $\{f_1(x,y),f_2(x,y)\} = \{r_\ga x^2  + s_\ga y^2, s_\ga x^2  + r_\ga y^2\}$.
Since $(r_\ga- s_\ga)^2 = (r_\ga + s_\ga)^2 - 4r_\ga s_\ga =\ - \frac{2\ga^2}{(8+6\ga^2)^{1/3}} 
\neq 0$,
these roots are distinct, and since
$(r_{\ga} + s_{\ga})(f_3 + f_4) = 2(f_1 + f_2)$,
the equation $f_1^3 + f_2^3= f_3^3 + f_4^3$ is a Type$(\frac{r_\ga + s_\ga}2)$ family.
\end{proof}

\begin{theorem}\label{wildcase}
If 
\begin{equation}\label{E:wild}
p(x,y) = f_1^3(x,y) + f_2^3(x,y) := (a x^2 + b x y + c y^2)^3 + (d x^2 + e x y + f y^2)^3
\end{equation}
is even and a sum of two even cubes $f_3^3(x,y) + f_4^3(x,y)$,
$(b,e) \neq (0,0)$, and $(d,e,f) \neq \om^k(a,-b,c)$, then a flip of 
$f_1^3 + f_2^3 = f_3^3 + f_4^3$ is a Type$(T)$ family for some $T$ and $p$ has a 
third representation as a sum of two cubes. 
\end{theorem}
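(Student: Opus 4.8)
The plan is to exploit the evenness of $p$ through the reflection $(x,y)\mapsto(-x,y)$. Writing $\tilde f(x,y):=f(-x,y)$, so that $\tilde f_1=ax^2-bxy+cy^2$ and $\tilde f_2=dx^2-exy+fy^2$ while $f_3,f_4$ are unchanged, the evenness of $p$ gives at once a second decomposition $p=\tilde f_1^3+\tilde f_2^3$. First I would verify that $\{\tilde f_1,\tilde f_2\}$ is honest and genuinely new: if $\{\tilde f_1^3,\tilde f_2^3\}=\{f_1^3,f_2^3\}$, then comparing leading coefficients shows the untransposed case forces $b=e=0$ (excluded, since $(b,e)\neq(0,0)$), while the transposed case $f_2=\om^j\tilde f_1$ is exactly the tame relation $(d,e,f)=\om^k(a,-b,c)$ (excluded by hypothesis); and $\{\tilde f_1,\tilde f_2\}$ cannot equal the even pair $\{f_3,f_4\}$ because $\tilde f_1,\tilde f_2$ are not both even. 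This already supplies the promised third representation of $p$.

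For the Type$(T)$ assertion I would recast it as a statement about linear dependence. Since honesty forbids $f_3\propto f_4$, the four quadratics $f_1,f_2,f_3,f_4$ span the whole $3$-dimensional space of binary quadratics, so they satisfy a dependence $\al f_1+\be f_2+\ga f_3+\de f_4=0$ that is unique up to scaling. A short computation shows that a flip of $f_1^3+f_2^3=f_3^3+f_4^3$ is of Type$(T)$ exactly when this dependence is \emph{balanced}: either $f_1-f_3\propto f_4-f_2$ (the pattern $(\ga,\de)=-(\al,\be)$, giving the flip $p_1$) or $f_1-f_4\propto f_3-f_2$ (the pattern $(\ga,\de)=-(\be,\al)$, giving $p_2$); in both cases the type is $T=-b/e$. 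Applying the reflection to the dependence and subtracting, the even forms drop out and $f_i-\tilde f_i\in\langle xy\rangle$ forces $\al b+\be e=0$, so $(\al,\be)\propto(e,-b)$. Equivalently, the unique even quadratic $w:=ef_1-bf_2$ in $\mathrm{span}(f_1,f_2)$ lies in $\mathrm{span}(f_3,f_4)$, and the balanced condition reduces to the single assertion that $w\propto ef_3-bf_4$ or $w\propto ef_4-bf_3$, i.e. that $w$ sits at the distinguished parameter $-b/e$ (equivalently $-e/b$) on the line $\mathrm{span}(f_3,f_4)$.

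The heart of the matter, and the step I expect to be hardest, is pinning down $(\ga,\de)$ — that is, placing $w$ at this parameter. The reflection fixes only $(\al,\be)$; the remaining rigidity must come from the cubic identity $f_1^3+f_2^3=f_3^3+f_4^3$ itself, not from linear dependence alone. Concretely I would set $X=x^2$, $Y=y^2$ and view the even sextic as the binary cubic
\[
\hat p(X,Y)=(aX+cY)^3+(dX+fY)^3+3XY\bigl(b^2(aX+cY)+e^2(dX+fY)\bigr),
\]
reading off from the vanishing of the odd part of $f_1^3+f_2^3$ the constraints $ba^2+ed^2=0$, $bc^2+ef^2=0$, and $6(bac+edf)=-(b^3+e^3)$, which parametrize the wild solutions. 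The even representation $\{f_3,f_4\}$ is then the unique way of writing the binary cubic $\hat p$ as a sum of two cubes, so $f_3$ and $f_4$ are (up to the correct relative scale) the linear factors of the Hessian of $\hat p$; the problem becomes the finite verification that the distinguished quadratic $w$ is proportional to $ef_3-bf_4$ or $ef_4-bf_3$. A representative wild computation confirms this, yielding Type$(4)$ through the flip $p_2$ and suggesting the uniform value $T=-b/e$; carrying out this last check cleanly — ideally by identifying $w$ with the relevant quadratic covariant of $\hat p$ rather than by brute force — is the crux. Once any flip is shown to be of Type$(T)$, the third representation has already been produced in the first paragraph, completing the statement.
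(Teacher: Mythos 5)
Your first paragraph is correct and is essentially the paper's own closing step: since $p$ is even and $(b,e)\neq(0,0)$, the reflected pair $f_1(x,-y)$, $f_2(x,-y)$ gives a representation distinct from $\{f_1,f_2\}$ (this is where $(d,e,f)\neq\om^k(a,-b,c)$ enters) and from the even pair $\{f_3,f_4\}$, so the third representation is secured. Your linear-algebra frame for the Type assertion is also sound, and in one respect cleaner than the paper's: applying the reflection to the dependence $\al f_1+\be f_2+\ga f_3+\de f_4=0$ and subtracting really does force $(\al,\be)\propto(e,-b)$, and your predicted type $T=-b/e$ agrees with the paper's answer $T=d^2$, since the coefficient equation $a^2b+d^2e=0$ gives $-b/e=d^2/a^2$. (Minor repair: the dependence is unique up to scale not because $f_3\not\propto f_4$, but because $f_3,f_4$ span $\langle x^2,y^2\rangle$ while some $f_i$ has a nonzero $xy$-term, so the four forms span all quadratics.)

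The gap is that the theorem's main content is exactly the step you defer. Knowing $(\al,\be)\propto(e,-b)$ only says that the even form $w=ef_1-bf_2$ lies in $\langle f_3,f_4\rangle=\langle x^2,y^2\rangle$, which is automatic and carries no information about $(\ga,\de)$; the balanced position of $w$ on that line is precisely what must be proved, and ``a representative wild computation confirms this'' does not establish it for the whole family. The paper closes this by explicit computation: it solves your three constraint equations completely (after scaling to $a=c=1$ one gets $b=-d^2e$, $f=-d$, $e^2(1-d^6)=12d^2$, so the genuinely wild solutions form a one-parameter family in $d$), produces $f_3,f_4$ explicitly by the Sylvester algorithm, and then verifies the single linear identity $f_1+d^2f_2=d^2f_3+f_4$, which is your flip $p_2$ with $T=d^2$. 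Until you either carry out that computation or supply the covariant identification of $w$ that you yourself label ``the crux,'' the Type$(T)$ claim --- and hence the theorem --- is not proved. A smaller point: Type$(T)$ tolerates only cube-root-of-unity factors on the $f_i$, so your balanced condition should be an equality $w=\om^k(ef_3-bf_4)$ (or the transposed version) rather than a bare proportionality; this comes out in the wash once $f_3,f_4$ are normalized, but as stated your criterion is necessary and not sufficient.
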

\begin{proof}
By considering the 
coefficients of $x^5y, x^3y^3, xy^5$ in \eqref{E:wild},  we need to solve
\begin{equation}\label{E:3.5}
3a^2b + 3d^2 e = 6abc + b^3 + 6def + e^3 = 3bc^2 +3ef^2 = 0.
\end{equation}
If $a=0$ in \eqref{E:3.5}, then $d^2e=0$. If $d=0$, then $a=d=0$ implies a common factor
in the quadratics, violating Lemma \ref{L:relprime}.
Hence $a=e=0$, so $b^3=0$ and $b=e=0$. These contradictions
imply that $a\neq 0$; similar arguments show that $cef \neq 0$. And now, if $b=0$, then
$d^2e = 0$ and $e \neq 0$ imply $d=0$, so $b \neq 0$ after all.  Similarly $e \neq 0$. Thus all
variables in \eqref{E:3.5} are non-zero.

By a scaling of $(x,y)$, we may assume $a=c=1$, so
\begin{equation}\label{E:wild2}
p(x,y) = (x^2 + b x y +y^2)^3 + (d x^2 + e x y + f y^2)^3
\end{equation}
is even, and \eqref{E:3.5} becomes
\begin{equation}
\begin{gathered}
3b + 3d^2e = 6b + b^3 + 6def + e^3 = 3b +3f^2e = 0.
\end{gathered}
\end{equation}
It follows that $b = -d^2e$ and $f^2 = d^2$; the remaining
equation becomes
\begin{equation}\label{E:3.8}
0 = - 6 d^2e - d^6e^3 + 6def + e^3 = e^3(1-d^6) + 6de(f-d).
\end{equation}
If $f=d$ in \eqref{E:3.8}, then $d^6=1$, so up to a power of $\om$, 
$d \in \{1,-1\}$. If $d=1$, then $e=-b, f=1$
implies that \eqref{E:wild2} is tame; if $d=-1$, then $e=-b,f=-1$ implies that $p = 0$. In
the remaining case, $f=-d$ and  $e^2(1-d^6)=12d^2$, so $e = \pm \frac{2\sqrt{3}d }{\sqrt{1-d^6}}$, 
$d^6 \neq 1$. 
By taking $y \mapsto -y$ if necessary, we may choose one square root and rewrite 
\eqref{E:wild2} as
\begin{equation}\label{E:3.9}
p(x,y) = 
 \left( x^2 - \frac{2\sqrt{3} d^3}{\sqrt{1-d^6}}\  x y + y^2\right)^3 +
\left(d x^2 +  \frac{2\sqrt{3}d}{\sqrt{1-d^6}}\ x y - dy^2\right)^3.
\end{equation}
Write \eqref{E:3.9} as $p = f_1^3 + f_2^3$. Pull $d^3$ out of the second factor and
 let $r = d^3$. A computation shows that
\[
p(x,y) = (1+r)x^6 + \frac {3(1+10r+r^2)}{1-r}\ x^4y^2 
+ \frac {3(1-10r+r^2)}{1+r}\ x^2y^4 + (1-r)y^6.
\]

We use the Sylvester algorithm (see \cite[Thm.2.1]{Re2})
to write $p$ as a sum of two cubes of even quadratics. In this way, and omitting details,
 we find that
\begin{equation}\label{E:3.10}
\begin{gathered}
p(x,y) = 
 r \left(-\frac{2+3r+ r^2}{1-r^2}\cdot  x^2 + \frac{2 - 3r + r^2}{1-r^2}\cdot y^2\right)^3  + \\
 \left(\frac{1+3r +2r^2}{1-r^2}\cdot x^2 + \frac{1 - 3r + 2r^2}{1-r^2}\cdot y^2\right)^3.
\end{gathered}
\end{equation}
Write \eqref{E:3.10} as $f_3^3 + f_4^3$, and restore $r = d^3$, so we now have
\begin{equation}\label{E:3.11}
\begin{gathered}
f_1(x,y) = x^2  - \frac{2\sqrt{3} d^3}{\sqrt{1-d^6}} x y + y^2, \quad
f_2(x,y) = d x^2 +  \frac{2\sqrt{3}d}{\sqrt{1-d^6}} x y - dy^2,\\
f_3(x,y) = -\frac{d(2+3d^3+ d^6)}{1-d^6}\cdot x^2 + \frac{d(2 - 3d^3 + d^6)}{1-d^6}\cdot y^2, \\
f_4(x,y) = \frac{1+3d^3+ 2d^6}{1-d^6}\cdot x^2 + \frac{1 - 3d^3 + 2d^6}{1-d^6}\cdot y^2.
\end{gathered}
\end{equation}
Putting this together,  \eqref{E:3.9}, \eqref{E:3.10} and \eqref{E:3.11} imply that
\[
\begin{gathered}
f_1^3(x,y) -f_4^3(x,y) = f_3^3(x,y) -f_2^3(x,y), \\
f_1(x,y) + d^2f_2(x,y) = d^2f_3(x,y)+ f_4(x,y) \quad (= (1+d^3)x^2 + (1-d^3)y^2).
\end{gathered}
\]
Thus, the wild case flips into a Type($d^2)$ family. Since $p$ is even, and 
$f_3,f_4$ are not, we also have $p = f_5^3 + f_6^3$ where $f_5(x,y) = f_3(x,-y)$
and $f_6(x,y) = f_4(x,-y)$.
\end{proof}

  \section{Equations of Type$(T)$}
 In this section we completely describe the solutions to \eqref{E:funda} of Type$(T)$. 
We begin with a probably familiar result from Diophantine analysis.
 
\begin{proposition}\label{W2} 
Suppose $\Phi(u,v) = au^2 + 2b uv + cv^2$ is a rank two quadratic form
in $\cc[x,y]$. Then any two honest solutions  $(p_i,q_i,r_i), i = 1,2$,  in binary quadratic forms
to the following equation are similar. 
\begin{equation}\label{E:dummy}
\Phi(p,q) = r^2.
\end{equation}
\end{proposition}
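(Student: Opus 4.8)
The plan is to factor the rank-two form $\Phi$ and reduce \eqref{E:dummy} to a unique-factorization statement about binary quadratics. Since $\Phi(u,v) = au^2 + 2buv + cv^2$ has rank two we have $ac - b^2 \neq 0$, so $\Phi$ splits over $\cc$ into two linearly independent linear factors, $\Phi(u,v) = (\al u + \be v)(\ga u + \de v)$ with $\al\de - \be\ga \neq 0$. For a solution $(p,q,r)$, put $L_1 := \al p + \be q$ and $L_2 := \ga p + \de q$, both binary quadratic forms; then \eqref{E:dummy} reads $L_1 L_2 = r^2$. Because $\al\de - \be\ga \neq 0$, the correspondence $(p,q) \leftrightarrow (L_1,L_2)$ is an invertible linear one, so $p$ and $q$ are recovered from $L_1, L_2$ by fixed formulas depending only on $\Phi$.

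The crux is to read off the structure of $L_1, L_2$ from honesty. First I would note that honesty means $p$ and $q$ are distinct, which (as $\al\de-\be\ga \neq 0$) is equivalent to $L_1 \not\propto L_2$; in particular $L_1L_2 = r^2 \neq 0$, so $r \neq 0$. Writing $r = m_1 m_2$ with $m_1, m_2$ linear gives $L_1 L_2 = m_1^2 m_2^2$, and since $\cc[x,y]$ is a unique factorization domain, each quadratic $L_i$ is a product of two of the four factors $m_1, m_1, m_2, m_2$. If some $L_i$ were to absorb one copy of each, it would be proportional to $m_1 m_2 = r$, forcing the other factor $\propto r$ as well and hence $L_1 \propto L_2$, contradicting honesty. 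The only surviving possibility is that $L_1$ and $L_2$ are each perfect squares of linear forms, say $L_1 = \kappa\, m_1^2$ and $L_2 = \kappa^{-1} m_2^2$ with $\kappa \neq 0$; moreover $L_1 \not\propto L_2$ makes $m_1, m_2$ independent.

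The normalization is then routine and the same for every honest solution. Since $m_1, m_2$ are independent there is a linear change carrying $(m_1,m_2)$ to the coordinate forms $(x,y)$, after which $L_1 = \kappa x^2$, $L_2 = \kappa^{-1} y^2$, and $r = xy$; following it by the scaling $(x,y) \mapsto (\kappa^{-1/2}x, \kappa^{1/2}y)$ turns this into $L_1 = x^2$, $L_2 = y^2$, $r = xy$, absorbing the solution-dependent constant $\kappa$. Inverting the fixed linear relation then gives
\[
p = \frac{\de\, x^2 - \be\, y^2}{\al\de - \be\ga}, \qquad q = \frac{-\ga\, x^2 + \al\, y^2}{\al\de - \be\ga},
\]
with coefficients depending only on $\Phi$. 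Thus each honest solution $(p_i,q_i,r_i)$ is carried by an invertible linear change to one and the same canonical triple $(p^\ast, q^\ast, xy)$, and since similarity is transitive, $(p_1,q_1,r_1)$ and $(p_2,q_2,r_2)$ are similar. The main obstacle is the factorization step of the middle paragraph: everything hinges on using honesty to exclude the ``mixed'' distribution and thereby force both $L_1$ and $L_2$ to be perfect squares, after which the reduction to a $\Phi$-determined normal form is mechanical.
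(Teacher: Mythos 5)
Your proof is correct and follows essentially the same route as the paper's: split the rank-two form $\Phi$ into two independent linear factors, use unique factorization in $\cc[x,y]$ to force $\al p + \be q$ and $\ga p + \de q$ to be squares $g^2,h^2$ with $r = gh$, and then invert the fixed linear relation so that a linear change of variables transports any one honest solution to any other. The only cosmetic differences are that you normalize each solution to a common canonical triple rather than mapping $(g_1,h_1)$ directly to $(g_2,h_2)$, and you extract the squareness from $L_1 \not\propto L_2$ where the paper invokes $\gcd(p,q)=1$.
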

\begin{proof}
Write \eqref{E:dummy} as $(a_{11}p + a_{12} q)(a_{21}p + a_{22} q) = r^2$, where the
factors on the left are distinct. Since $\gcd(p,q) = 1$, 
$\gcd(a_{11}p + a_{12} q,a_{21}p + a_{22} q) = 1$ as well. It follows by unique factorization
that $ (a_{11}p + a_{12} q,a_{21}p + a_{22} q,r) = (g^2,h^2,gh)$, for suitable distinct linear forms
$g,h$. Let $[b_{ij}] = [a_{ij}]^{-1}$.  Then 
\[
(p,q,r) = (b_{11}g^2 + b_{12} h^2, b_{21}g^2 + b_{22} h^2,gh).
\] 
In particular, $(p_j,q_j,r_j)$ comes from $(g_j,h_j)$, and
the linear change $M$ taking the honest pairs of linear forms $(g_1,h_1)$ into $(g_2,h_2)$ will 
take $(p_1,q_1,r_1)$ into $(p_2,q_2,r_2)$. 
\end{proof} 
 
 \begin{lemma}\label{L:notype}
 If \eqref{E:funda} is  honest and a Type$(T)$ family, then $T(T^3-1) \neq 0$.
 \end{lemma}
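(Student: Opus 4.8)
The plan is to rule out each forbidden value of $T$ by squeezing a proportionality among the four forms out of the hypotheses, thereby contradicting honesty. After possibly passing to the flip (which merely regroups and signs the same four forms and so preserves pairwise non-proportionality), I may assume that \eqref{E:type} holds outright. I would work in the integral domain $\cc[x,y]$ and set $s = f_1 + f_2$, $p = f_1 f_2$, $s' = f_3 + f_4$, $p' = f_3 f_4$. Using $g_1^3 + g_2^3 = (g_1+g_2)^3 - 3 g_1 g_2 (g_1+g_2)$, the cube identity in \eqref{E:type} becomes $s^3 - 3ps = s'^3 - 3p's'$, while the linear identity reads $s = T s'$. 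The first thing to record is that honesty forces $s \neq 0$ and $s' \neq 0$: for instance $s' = 0$ would give $f_4 = -f_3$, making $f_3, f_4$ proportional. The case $T = 0$ is then immediate, since $s = Ts' = 0$ gives $f_2 = -f_1$, again a proportionality; hence $T \neq 0$.

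For the case $T^3 = 1$, I would substitute $s = Ts'$ into $s^3 - 3ps = s'^3 - 3p's'$ and cancel the nonzero factor $s'$ (legitimate in $\cc[x,y]$) to obtain the clean symmetric-function relation $(T^3-1)\,s'^2 = 3(Tp - p')$. When $T^3 = 1$ the left-hand side vanishes, forcing $p' = Tp$, i.e.\ $f_3 f_4 = T f_1 f_2$. Now I compare the pairs $\{f_1, f_2\}$ and $\{Tf_3, Tf_4\}$: their sums agree, $Tf_3 + Tf_4 = Ts' = s = f_1 + f_2$, and their products agree, $(Tf_3)(Tf_4) = T^2 p' = T^3 p = p = f_1 f_2$, using $T^3 = 1$. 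Hence both pairs are the roots of the single monic quadratic $z^2 - sz + p$ in $(\cc[x,y])[z]$, and by unique factorization $\{f_1,f_2\} = \{Tf_3, Tf_4\}$ as multisets. Since $T \neq 0$, this makes one of $f_1, f_2$ proportional to one of $f_3, f_4$, contradicting honesty. Therefore $T^3 \neq 1$, and together with $T \neq 0$ this gives $T(T^3 - 1) \neq 0$.

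I do not expect a genuine obstacle here; the engine is the relation $(T^3-1)s'^2 = 3(Tp - p')$, which isolates exactly the factor $T^3 - 1$ whose vanishing must be excluded. The only points demanding care are structural rather than computational: every cancellation and every root-matching step must be performed in $\cc[x,y]$ (or its fraction field), which is an integral domain and a UFD, so that dividing by $s'$ and identifying the two factorizations of $z^2 - sz + p$ are both legitimate; and honesty must be invoked cleanly at each degenerate junction ($s = 0$, $s' = 0$, and the final proportionality) to produce the contradiction. The flip bookkeeping at the very start is the one place where I would be careful to confirm that the four forms remain pairwise non-proportional, but this is clear since flipping only regroups and negates them.
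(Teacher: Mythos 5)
Your proof is correct and follows essentially the same route as the paper's: both arguments reduce to showing that when $T^3=1$ the sum and product of $\{f_1,f_2\}$ agree with those of $\{Tf_3,Tf_4\}$, so the two pairs coincide as root sets of the same monic quadratic, contradicting honesty. The only cosmetic difference is that the paper first absorbs the cube root of unity into $(f_3,f_4)$ to normalize $T=1$ before extracting $f_1f_2=f_3f_4$, whereas you carry $T$ through the symmetric-function computation and also make explicit the (correct) observation that honesty forces $f_1+f_2\neq 0$ and $f_3+f_4\neq 0$ before cancelling.
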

 \begin{proof}
 If $T= 0$, then $f_2=-f_1$, violating honesty. Suppose $T^3 = 1$, so $T= \om^k$.
Then by $(f_3,f_4) \mapsto \om^k(f_3,f_4)$ we may assume
that $f_1 + f_2 = f_3 + f_4$.  In this case, we have
\begin{equation}\label{E:trickery}
(f_1+f_2)^2 - \frac{f_1^3 + f_2^3}{f_1+f_2} = (f_3+f_4)^2 -\frac{f_3^3 + f_4^3}{f_3+f_4} \implies
f_1f_2 = f_3f_4.
\end{equation}
This implies that $\{f_1,f_2\} = \{f_3,f_4\}$, again violating honesty. 
 \end{proof}

\begin{theorem}\label{miracle}
Suppose $\{f_1,f_2,f_3,f_4\}$ is an honest Type$(T)$ family; specifically
\begin{equation}\label{E:ayup}
\begin{gathered}
f_1^3 + f_2^3 = f_3^3 + f_4^3, \\
f_1 + f_2 = T(f_3 + f_4),\qquad T(T^3-1) \neq 0,
\end{gathered}
\end{equation}
and let $T = \la^2$.
Then there is a linear change $M$ so that 
$\{f_1\circ M,f_2 \circ M\} = \{F_{3,\la},-F_{5,\la}\}$ and 
$\{f_3\circ M,f_4 \circ M\} = \{-F_{4,\la},F_{6,\la}\}$.

\end{theorem}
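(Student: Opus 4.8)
The plan is to collapse the two equations of \eqref{E:ayup} into a single conic equation in three quadratic forms and then invoke the rigidity supplied by Proposition \ref{W2}. Introduce the quadratic forms $u = f_3 + f_4$, $g = f_1 - f_2$ and $h = f_3 - f_4$. Honesty forces $g \neq 0$ and $h \neq 0$ (otherwise $f_1 = f_2$ or $f_3 = f_4$), and $u \neq 0$ as well, since $u = 0$ together with $f_1 + f_2 = Tu = 0$ and $T \neq 0$ would make both sides of \eqref{E:ayup} vanish. Applying the identity $f_i^3 + f_j^3 = \tfrac14 (f_i+f_j)\big((f_i+f_j)^2 + 3(f_i - f_j)^2\big)$ to each side of \eqref{E:ayup}, substituting $f_1 + f_2 = Tu$, and cancelling the common factor $u$, I would obtain
\[
3h^2 = (T^3-1)u^2 + 3T g^2, \qquad\text{i.e.}\qquad \Phi(u,g) = h^2,\quad \Phi(p,q) := \tfrac{T^3-1}{3}p^2 + T q^2.
\]
By Lemma \ref{L:notype} we have $T(T^3-1) \neq 0$, so $\Phi$ has rank two and \eqref{E:ayup} has been recast as an instance of \eqref{E:dummy}.

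To apply Proposition \ref{W2} I must check that $(u,g,h)$ is an \emph{honest} solution, i.e. that $\gcd(u,g) = 1$. If a linear form $\ell$ divided both $u = f_3 + f_4$ and $g = f_1 - f_2$, then the type relation would give $\ell \mid f_1 + f_2$, whence $\ell \mid f_1$ and $\ell \mid f_2$, contradicting the pairwise coprimality of an honest family (Lemma \ref{L:relprime}). Hence $(u,g,h)$ is an honest solution of $\Phi(p,q) = r^2$.

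Next I would produce a reference solution from the known forms. The displayed identities preceding this theorem show, after a sign change, that $\{\{F_{3,\la}, -F_{5,\la}\}, \{-F_{4,\la}, F_{6,\la}\}\}$ satisfies both $F_{3,\la}^3 + (-F_{5,\la})^3 = (-F_{4,\la})^3 + F_{6,\la}^3$ and $F_{3,\la} + (-F_{5,\la}) = \la^2\big((-F_{4,\la}) + F_{6,\la}\big)$; since $T = \la^2$, this is an honest Type$(T)$ family, and the same computation as above yields the honest solution $(u_0, g_0, h_0) = (-F_{4,\la}+F_{6,\la},\, F_{3,\la}+F_{5,\la},\, -F_{4,\la}-F_{6,\la})$ of the \emph{same} equation $\Phi(p,q) = h^2$. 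By Proposition \ref{W2} the two honest solutions are similar: there is a linear change $M$ with $(u\circ M, g\circ M, h\circ M) = (u_0,g_0,h_0)$, any common scalar being absorbed into $M$.

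Finally I would recover the individual forms. A linear change preserves the type relation, so $f_1\circ M + f_2\circ M = T\,(u\circ M) = \la^2 u_0 = F_{3,\la} - F_{5,\la}$, while $f_1\circ M - f_2\circ M = g\circ M = g_0 = F_{3,\la}+F_{5,\la}$; solving gives $\{f_1\circ M, f_2\circ M\} = \{F_{3,\la}, -F_{5,\la}\}$. Likewise $f_3\circ M + f_4\circ M = u_0 = -F_{4,\la}+F_{6,\la}$ and $f_3\circ M - f_4\circ M = h_0 = -F_{4,\la} - F_{6,\la}$ give $\{f_3\circ M, f_4\circ M\} = \{-F_{4,\la}, F_{6,\la}\}$, as claimed. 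The remaining freedom---interchanging $f_3$ and $f_4$, which only flips the sign of $h$, and the two choices of $\la$ with $\la^2 = T$, handled by $\mathcal F_{j,-\la} \sim \mathcal F_{j,\la}$---does not affect the conclusion. The one step needing genuine care is the reduction to Proposition \ref{W2}, namely verifying the nonvanishing of $u,g,h$ and the coprimality $\gcd(u,g)=1$; once \eqref{E:ayup} is an honest instance of \eqref{E:dummy}, the rigidity of quadratic parameterizations of a conic does the rest, and the recovery of the $f_j\circ M$ is pure bookkeeping.
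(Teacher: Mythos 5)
Your proof is correct and follows essentially the same route as the paper: both reduce the Type$(T)$ system to a single rank-two conic equation $\Phi(\cdot,\cdot)=(\cdot)^2$ in binary quadratic forms, invoke the rigidity of Proposition \ref{W2}, and then match against the explicit reference family $\{\{F_{3,\la},-F_{5,\la}\},\{-F_{4,\la},F_{6,\la}\}\}$. The only differences are cosmetic --- the paper treats $f_1,f_2$ as roots of a quadratic over $(f_3,f_4)$ and arrives at $\Phi(f_3,f_4)=(f_1-f_2)^2$ with a non-diagonal $\Phi$, whereas you diagonalize by working with the triple $(f_3+f_4,\,f_1-f_2,\,f_3-f_4)$ --- and your explicit verification of the coprimality hypothesis needed for Proposition \ref{W2} is a detail the paper leaves implicit.
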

\begin{proof}
As in \eqref{E:trickery}, after dividing the equations in \eqref{E:ayup} we obtain
\begin{equation}\label{E:trick2}
f_1^2 - f_1f_2 + f_2^2 = T^{-1}(f_3^2 - f_3f_4 + f_4^2).
\end{equation}
It follows that
\begin{equation}
\begin{gathered}
3f_1f_2 = (f_1+f_2)^2 - (f_1^2-f_1f_2+f_2^2) = \\
(T^2-T^{-1})f_3^2  +(2T^2+T^{-1})f_3f_4 + (T^2-T^{-1})f_4^2.
\end{gathered}
\end{equation}
But $f_1$ and $f_2$ are quadratic forms, and also the roots of the quadratic
\begin{equation}\label{E:deus}
\begin{gathered}
(X-f_1)(X-f_2) =\\
X^2 - T(f_3+f_4) X +\tfrac 13((T^2-T^{-1})(f_3^2 + f_4^2) +(2T^2+ T^{-1})f_3f_4) \\
\implies \{f_1,f_2\} = \{\tfrac{T}2 (f_3+f_4) \pm \tfrac 12 \sqrt{\Delta}\}; \\
\Delta = \tfrac 1{3T} \left( (4-T^3)f_3^2 - (4 + 2T^3)f_3f_4 + (4-T^3)f_4^2 \right) = (f_2-f_1)^2.
\end{gathered}
\end{equation}
Consider now the quadratic form $\Phi$, which has rank 2 if $T^3 \neq 1$.
\[
\Phi(u,v) =  \frac 1{3T} \left( (4-T^3)u^2 - (4 + 2T^3)u v + (4-T^3)v^2 \right).
\]
We have seen that $\Phi(f_3,f_4) = (f_2-f_1)^2$. It may be checked that
\begin{equation}
\Phi(-F_{4,\la},F_{6,\la}) = (\la^3 x^2 + 2 x y + \la^3 y^2)^2.
\end{equation}
Thus by Proposition \ref{W2}, there is a linear change $M$ so that $f_3 = -F_{4,\la}\circ M$ and 
$f_4 = F_{6,\la}\circ M$. It is routine to check that the quadratic equation 
\eqref{E:deus} then solves to give $\{f_1\circ M,f_2\circ M\}$ = $\{F_{3,\la},-F_{5,\la}\}$.
\end{proof}

\begin{proof}[Proof of Theorem \ref{T:cubic}]
Combine Theorems \ref{T:tamerep},  \ref{wildcase} and  \ref{miracle}.
\end{proof}

The historical motivation for the study of \eqref{E:funda} was to find 
parameterizations of equal sums of pairs of rational cubes, so
there is a special interest in solutions to \eqref{E:funda} in which $f_j \in \mathbb Q[x,y]$. 
Since every solution to \eqref{E:funda} is a Type$(T)$ family, we can ask a more general
question. Suppose $E \subseteq \mathbb C$ is a number field. For which values of $T = \la^2$ 
does there exist a solution to  \eqref{E:funda} with $f_j \in E[x,y]$ of Type$(T)$? 

Two partial  answers are immediate. If \eqref{E:funda} holds
with $f_j \in E[x,y]$ of Type$(\la^2)$, then \eqref{E:type}
implies that $\la^2 \in E$. On the other hand, if $\la \in E$, then \eqref{E:Naren} gives a
solution to \eqref{E:funda} with $f_j \in E[x,y]$ of Type$(\la^2)$.
What happens if $\la  \notin E$ but $\la^2 \in E$? We give negative answers in two special
cases.
\begin{theorem}\label{T:lastminute}
\ 

\noindent (i) In any solution to \eqref{E:funda} with $f_j \in \mathbb R[x,y]$ of Type$(T)$, 
we have $T > 0$.

\noindent (ii) There is no solution to  \eqref{E:funda} with $f_j \in \mathbb Q[x,y]$ of Type$(2)$.
\end{theorem}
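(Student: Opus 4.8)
The plan is to use the explicit structure theorem for Type$(T)$ families from Theorem \ref{miracle}. Since every honest Type$(T)$ family is similar to the standard family with $T = \la^2$, and similarity is given by an invertible linear change $M$, the question of whether a rational (or real) solution exists becomes a question about which linear changes $M$ can take the standard forms $F_{j,\la}$ into forms with coefficients in the desired field. I would begin by setting $T = \la^2$ and writing out the standard representatives $\{F_{3,\la}, -F_{5,\la}\}$ and $\{-F_{4,\la}, F_{6,\la}\}$ explicitly from \eqref{E:naming}, recalling that these involve $\om = \ze_3$ and so naturally live in $\mathbb Q(\la, \om)[x,y]$.

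For part (i), I would argue as follows. Suppose we have an honest real Type$(T)$ family. From \eqref{E:type} we have $f_1 + f_2 = T(f_3 + f_4)$ with all $f_j \in \mathbb R[x,y]$, so certainly $T \in \mathbb R$. The key is to extract a sign constraint. I would look at equation \eqref{E:trick2}, namely $f_1^2 - f_1 f_2 + f_2^2 = T^{-1}(f_3^2 - f_3 f_4 + f_4^2)$, and evaluate both sides at a convenient real point $(x_0, y_0)$. The quadratic form $u^2 - uv + v^2$ is positive definite over $\mathbb R$, so both $f_1^2 - f_1 f_2 + f_2^2$ and $f_3^2 - f_3 f_4 + f_4^2$ are sums that are nonnegative wherever the $f_j$ take real values; more precisely, for real values $s,t$ one has $s^2 - st + t^2 \ge 0$ with equality only at $s=t=0$. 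Since the $f_j$ are pairwise relatively prime (Lemma \ref{L:relprime}), I can choose a real point where $f_3, f_4$ do not both vanish, forcing the right-hand side to be a positive multiple of $T^{-1}$; comparing signs with the nonnegative left-hand side then forces $T > 0$. I would need to check that honesty prevents the degenerate vanishing, which follows because $f_1, f_2$ cannot share a common zero with the relevant combination.

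For part (ii), with $T = 2$ we would have $\la^2 = 2$, so $\la = \sqrt 2 \notin \mathbb Q$, and the obstruction is precisely that $\la$ itself is irrational even though $\la^2$ is rational. I would take the explicit forms from Theorem \ref{miracle} and examine how a linear change $M \in GL_2(\mathbb C)$ must act to land all four forms in $\mathbb Q[x,y]$. The standard forms $F_{j,\la}$ have coefficients in $\mathbb Q(\sqrt 2, \om)$, and the constraint is that after applying $M$ every coefficient becomes rational. I expect the cleanest route is to use a Galois-theoretic or invariant-theoretic argument: consider an invariant of the family (for instance the cross-ratio of the four roots of $p$, or the discriminant-type quantity $\Phi$ appearing in \eqref{E:deus}) that must be fixed under $\mathrm{Gal}(\overline{\mathbb Q}/\mathbb Q)$ if the family is rational, and show that for $\la = \sqrt 2$ this invariant genuinely involves $\sqrt 2$ and is not fixed by the automorphism $\sqrt 2 \mapsto -\sqrt 2$. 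Since $F_{j,-\la}$ is similar to $F_{j,\la}$ (as noted in the symmetry discussion after \eqref{E:flippo2}), but the two are a priori different families, I must verify that the Galois conjugate family is genuinely distinct from the original, producing the contradiction.

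The main obstacle I anticipate is part (ii): part (i) is a relatively soft positivity argument, but ruling out $T = 2$ requires pinning down exactly which field-theoretic invariant of a Type$(\la^2)$ family distinguishes $\la$ from $-\la$, and showing this invariant lies in $\mathbb Q$ precisely when $\la \in \mathbb Q$ rather than merely $\la^2 \in \mathbb Q$. The delicate point is that every rational solution must be a rational linear change of the standard family, yet the standard family is only defined over $\mathbb Q(\la,\om)$; I would need to rule out the possibility that some non-obvious change of variables conspires to clear the $\sqrt 2$ from all coefficients simultaneously. I suspect the resolution uses the specific numerology of $T = 2$ (as opposed to general $T$), so the final calculation will hinge on a particular arithmetic fact about $\la = \sqrt 2$ that fails the descent, consistent with the authors' stated suspicion that $\sqrt T \in \mathbb Q$ is generally necessary.
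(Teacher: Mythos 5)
Your part (i) is essentially the paper's own argument: both rest on evaluating \eqref{E:trick2} at real points and using the positive definiteness of $s^2-st+t^2$; the only cosmetic difference is that the paper phrases it as ``if $T^{-1}<0$ then every $f_j$ vanishes identically,'' while you phrase it as choosing a point where $f_3,f_4$ do not both vanish (which is fine, since relatively prime binary quadratics have no common nonzero root). That half is correct.

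Part (ii) is where there is a genuine gap, and it is exactly at the point you flagged as needing verification. Your plan is to show that the Galois automorphism $\sqrt2\mapsto-\sqrt2$, i.e.\ $\la\mapsto-\la$, carries a putative rational Type$(2)$ family to a \emph{different} family, contradicting rationality. But the paper records (in the symmetry discussion after \eqref{E:naming}) that $F_{j,-\la}(x,y)=-F_{j,\la}(x,-y)$, so $\mathcal F_{j,-\la}$ is \emph{similar} to $\mathcal F_{j,\la}$; the conjugate family is not distinct in any sense that similarity classes, the cross-ratio of the roots of $p$, or the quadratic form $\Phi$ of \eqref{E:deus} can detect. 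Indeed $\Phi$ depends only on $T=\la^2$, and the sextic $p_{1,\la}$ is a constant times $B_{\la^3+\la^{-3}}$ with $B_t\sim B_{-t}$, so every invariant you propose is already fixed by $\sqrt2\mapsto-\sqrt2$. There is no Galois-descent obstruction here, and your argument cannot close.

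The paper's actual mechanism is arithmetic rather than Galois-theoretic: it clears denominators to get $f_j\in\mathbb Z[x,y]$, specializes \eqref{E:deus} at $T=2$ to obtain $-\tfrac23(f_3^2+5f_3f_4+f_4^2)=(f_2-f_1)^2$, rewrites this as the identity $7(f_3+f_4)^2+6(f_2-f_1)^2=3(f_3-f_4)^2$, and then shows by descent modulo $3$ that the ternary form $7A^2+6B^2=3C^2$ has no nonzero integer solutions (it is anisotropic at $3$). Evaluating at all integer points then forces $f_3\pm f_4$ and $f_2-f_1$ to vanish identically, contradicting honesty. So the ``particular arithmetic fact about $T=2$'' you anticipated is a local solvability obstruction for a ternary quadratic form, not a failure of descent for $\sqrt2$; to repair your proof you would need to replace the Galois step with something of this kind.
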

\begin{proof}
In the first case,  \eqref{E:trick2} implies that  $f_1^2 - f_1f_2+f_2^2 = T^{-1}(f_3^2- f_3f_4+f_4^2)$.
However, for $s, t \in \mathbb R$, $s^2-st+t^2 \ge 0$, with equality only if $s=t = 0$. 
If $T^{-1} < 0$, then we must have $f_j(x,y) = 0$ for all real $x,y$. Thus $T \ge 0$, and
since $T \neq 0$, $T > 0$.

Suppose now there exists an honest
solution to  \eqref{E:funda} with $f_j \in \mathbb Q[x,y]$ of Type$(2)$,
and take multiples to ensure that $f_j \in \mathbb Z[x,y]$. By \eqref{E:deus} we have
\begin{equation}\label{E:lastminute}
 -\frac 23(f_3^2 + 5f_3f_4 + f_4^2) = (f_2-f_1)^2 \implies \\
7(f_3+f_4)^2 + 6(f_2 - f_1)^2 =  3(f_3-f_4)^2. 
 \end{equation}
We claim this is impossible. The Diophantine equation $7A^2 + 6B^2 = 3C^2$ is easily seen to
have no non-zero solutions in $\mathbb Z$. (Let $(A,B,C)$ be a solution with minimal $C$,
then $3 \ | \ A$; let $A = 3D$, so $21D^2 + 2B^2 = C^2$, hence $2B^2 \equiv C^2 \mod 3$. This
implies that $B \equiv C \equiv 0 \mod 3$, so $3 \ | \ B,C$ and $(\frac A3,\frac B3,\frac C3)$ 
is a smaller solution.)  Evaluation  of \eqref{E:lastminute} at $(x,y) \in \mathbb Z^2$ shows
that $f_3\pm f_4, f_2-f_1$ all vanish on $\mathbb Z^2$, hence are identically zero, and so the 
family is not honest after all. 
\end{proof} 

Finally, a 1595 identity of Vieta (see \cite{RR}) becomes a version of 
 \eqref{E:funda} upon clearing denominators: 
 \[
 (x(x^3-y^3))^3 +  (y(x^3-y^3))^3 = (x(x^3 +2y^3))^3 +  (-y(2x^3 + y^3))^3;
 \]
 the four quartics above are linearly independent. It seems unlikely that the methods of
 this paper are helpful when $f_j$ in \eqref{E:funda} have degree greater than two.
\section{How many ways is a sextic a sum of two cubes?}

We turn to a more general question. 
Lundqvist, Oneto, Shapiro and the author proved in \cite{LORS} 
that every binary sextic in $\cc[x,y]$ can be 
written in infinitely many different ways as a sum of three cubes of quadratic forms. 
It is natural to wonder which binary sextics can be written as a sum of two cubes, and in how
many ways. 

We need some more general notation: 
for distinct forms $F,G \in \cc[x_1\dots,x_n]$, write $X = \langle F,G\rangle$ for the 
linear subspace $\{c_1 F + c_2 G \}$, and
write $X^3 = \langle F^3, F^2G, FG^2, G^3 \rangle$; $X^3$ is
the set of all $h(F,G)$ for binary cubic forms $h$. 

\begin{theorem}\label{T:B} 
A form $p \in \cc[x_1\dots,x_n]$ of degree $3r$ can be written as $p = f_1^3 + f_2^3$ for distinct
forms $f_i$ of degree $r$ if and only if it has a factorization $p = g_1g_2g_3$ in which the $g_k$'s
are distinct but linearly dependent and $\langle f_1, f_2 \rangle =  
\langle g_1, g_2,g_3 \rangle$. If $p$ belongs to $m$ different subspaces 
$\langle F_j,G_j \rangle^3$ as above, then $N(p) \le m$. If $p$ is not divisible by the square of a form of degree $r$, then
$N(p) = m$.
\end{theorem}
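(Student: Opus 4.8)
The plan is to prove Theorem~\ref{T:B} in two directions, treating the characterization of two-cube sums first and then the counting statement. For the forward direction, suppose $p = f_1^3 + f_2^3$ with $f_1, f_2$ distinct forms of degree $r$. The key algebraic observation is the factorization over $\cc$:
\[
f_1^3 + f_2^3 = (f_1 + f_2)(f_1 + \om f_2)(f_1 + \om^2 f_2).
\]
Setting $g_1 = f_1 + f_2$, $g_2 = f_1 + \om f_2$, $g_3 = f_1 + \om^2 f_2$ gives a factorization $p = g_1 g_2 g_3$ into three forms of degree $r$. These are pairwise distinct (since $f_1, f_2$ are distinct and the multipliers $1, \om, \om^2$ differ), and they are linearly dependent because $g_1 + \om g_2 + \om^2 g_3 = (1 + \om + \om^2)f_1 + (1 + \om^2 + \om^4)f_2 = 0$. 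Moreover each $g_k \in \langle f_1, f_2\rangle$, and conversely $f_1, f_2$ are recovered as linear combinations of the $g_k$, so $\langle f_1, f_2\rangle = \langle g_1, g_2, g_3\rangle$, which incidentally shows this span is only two-dimensional.

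For the converse, suppose $p = g_1 g_2 g_3$ with the $g_k$ distinct but linearly dependent, say $c_1 g_1 + c_2 g_2 + c_3 g_3 = 0$ with the $c_j$ not all zero (in fact all nonzero, since two of the $g_k$ cannot be proportional). The idea is to reverse the above: I would seek $f_1, f_2$ with $g_k = f_1 + \om^{k-1} f_2$ up to scaling, i.e.\ exhibit $p$ as $f_1^3 + f_2^3$ by recognizing $g_1 g_2 g_3$ as the norm form of $f_1 + f_2 \om$ over the cubic extension. Concretely, after rescaling the $g_k$ by appropriate scalars (absorbing constants into $p$ via an overall factor), the linear dependence with the specific ratios $1 : \om : \om^2$ can be arranged, at which point solving the two linear equations for $f_1, f_2$ produces distinct forms with $f_1^3 + f_2^3 = g_1 g_2 g_3 = p$. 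This matches the requirement $\langle f_1, f_2\rangle = \langle g_1, g_2, g_3\rangle$.

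For the counting statement, the bound $N(p) \le m$ is nearly immediate from the first part: every representation $p = f_1^3 + f_2^3$ determines the subspace $\langle f_1, f_2\rangle$, and since $f_1^3 + f_2^3 \in \langle f_1, f_2\rangle^3$, each representation lands in one of the $m$ subspaces $\langle F_j, G_j\rangle^3$ containing $p$. The content is that \emph{within a single such subspace}, $p$ has at most one representation as a sum of two cubes up to the identification defined earlier (order and powers of $\om$). Fixing the two-dimensional $X = \langle F, G\rangle$, any expression of $p$ as a sum of two cubes of elements of $X$ is a representation of $p$ as a binary cubic form $h(F,G)$ that is a sum of two cubes of linear forms in $F, G$; by Lemma~\ref{L:no2sums} (applied in the two variables $F, G$), a binary cubic has at most one such representation up to the stated identifications, giving $N(p) \le m$.

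The final and hardest part is showing $N(p) = m$ when $p$ is not divisible by the square of a degree-$r$ form: here I must show each of the $m$ subspaces $X_j = \langle F_j, G_j\rangle^3$ containing $p$ actually \emph{yields} a genuine honest representation, i.e.\ that $p$ really is a sum of two \emph{distinct} cubes from $X_j$, not merely a general cubic in $h(F_j, G_j)$. The main obstacle is ruling out the degenerate cases where the binary cubic $h(F_j, G_j) = p$ fails to factor into three distinct linear forms over $\langle F_j, G_j\rangle$ — precisely the situation where $p$ would be a cube or have a repeated cube-root factor. This is exactly where the square-free hypothesis enters: if $p$ were a sum of two cubes in $X_j$ but with a repeated factor, then $p$ would be divisible by the square of a degree-$r$ form, contradicting the hypothesis. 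I would argue that the square-freeness forces the cubic $h$ to have three distinct roots, hence $p$ splits as a product of three distinct forms lying in $X_j$, which by the converse direction is a bona fide two-cube representation; counting these across all $m$ subspaces and checking the identifications do not collapse distinct subspaces then yields $N(p) = m$.
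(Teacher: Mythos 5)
Your proposal is correct and follows essentially the same route as the paper: the forward direction via the factorization $(f_1+f_2)(f_1+\om f_2)(f_1+\om^2 f_2)$ with the dependence $g_1+\om g_2+\om^2 g_3=0$, uniqueness within a fixed subspace $\langle F,G\rangle$ by reducing to the binary-cubic case (the paper makes the reduction explicit via the linear independence of $\{F^{3-k}G^k\}$ from Lemma~\ref{L:linin}), and the square-free hypothesis ruling out exactly the cases where the cubic $h$ with $p=h(F_j,G_j)$ is a cube or has a repeated root. The only cosmetic difference is in the converse: you normalize the dependence to the ratios $1:\om:\om^2$ and solve for $f_1,f_2$, whereas the paper writes down the explicit identity $g_1g_2(\al g_1+\be g_2)=\tfrac 1{3\sqrt{-3}\,\al\be}\left((\om\al g_1-\be g_2)^3+(-\al g_1+\om\be g_2)^3\right)$; these are equivalent.
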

\begin{proof}
In one direction,
\begin{equation}\label{E:om}
p = f_1^3 + f_2^3 \implies p =(f_1+f_2)(f_1 + \om f_2)(f_1 + \om^2 f_2):= g_1g_2g_3.
\end{equation}
If any two of the $g_i$'s are proportional
in \eqref{E:om}, then so are $f_1$ and $f_2$, and $p$ is a cube contrary
to hypothesis. For dependence, $g_j \in \langle f_1,f_2 \rangle$, 
also, $g_1 + \om g_2 + \om^2 g_3 = 0$.

Conversely if $P = g_1g_2g_3$ and  $g_1$ and $g_2$ are distinct with 
$g_3 \in  X = \langle g_1, g_2 \rangle$,  there exist $\al, \be \neq 0$ so that 
$g_3 = \al g_1 + \be g_2$. The sum of two cubes follows from an old formula
(recall that $\om - \om^2 = \sqrt{-3}$):
\begin{equation}
\begin{gathered}
p = g_1g_2g_3 = g_1g_2(\al g_1 + \be g_2) = \\
\frac 1{3\sqrt{-3}\  \al \be} \cdot \left( (\om \al g_1-  \be g_2)^3 + ( -\al g_1+ \om \be  g_2)^3 \right).\end{gathered}
\end{equation}
Suppose $p$ had two different 
expressions as a sum of two cubes of forms in $\langle f_1,f_2 \rangle$:
\[
p = (c_{1,1}f_1 + c_{2,1}f_2)^3 + (c_{3,1}f_1 + c_{4,1}f_2)^3 
= (c_{1,2}f_1 + c_{2,2}f_2)^3 + (c_{3,2}f_1 + c_{4,2}f_2)^3.
\]
Then by the linear independence of $\{f_1^{3-k}f_2^k\}$ from
Lemma \ref{L:linin}, it follows that
\[
(c_{1,1}x + c_{2,1}y)^3 + (c_{3,1}x + c_{4,1}y)^3 
= (c_{1,2}x + c_{2,2}y)^3 + (c_{3,2}x + c_{4,2}y)^3,
\]
which contradicts Lemma \ref{L:no2sums}. 

Thus, every representation of $p=f_1^3+f_2^3$ identifies the subspace
$\langle f_1,f_2\rangle^3$. Conversely, if $p \in \langle f_1,f_2\rangle^3$, then there is a cubic
form $h$ so that $p = h(f_1,f_2)$ and
\[
h(x,y) = \sum_{j=1}^2 (\al_j x + \be_j y)^3 \implies p(x,y) =  \sum_{j=1}^2 (\al_j f_1 + \be_j f_2)^3.
\]
If $p \in \langle f_1,f_2\rangle^3$, then $p$ is a sum of two cubes, unless $h$ is a cube 
(and hence so is $p$),
or $h(x,y) = (\al_1 x + \be_1 y)^2(\al_2 x + \be y)$, so  $p$ is divisible by
$(\al_1 f_1 +\be_1 f_2)^2$.
\end{proof}

Our study of sextics relies critically on the behavior of cubics as a sum of cubes.
An important corollary was known in the 19th century (see also e.g. \cite[Thm.5.2]{Re2}).
 A binary cubic $q$ is {\it square-free} if it is a 
product of three pairwise distinct linear factors. 
\begin{proposition}\label{T:0}
If $p$ is a binary cubic which is not the cube of a linear form, then $p = \ell_1^3 + \ell_2^3$ for 
distinct linear forms $\ell_j$ if and only if $p$ it is square-free, and this representation is unique,
\end{proposition}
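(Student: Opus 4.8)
The plan is to reduce everything to Theorem \ref{T:B} specialized to degree $3r$ with $r = 1$, exploiting the fact that the space of linear forms in $\cc[x,y]$ is only two-dimensional. This dimensional coincidence trivializes two of the hypotheses in Theorem \ref{T:B}: any three linear forms are automatically linearly dependent, and any two distinct linear forms already span the whole space of linear forms.

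For the equivalence, I would argue as follows. If $p$ is square-free, write $p = g_1 g_2 g_3$ with the $g_j$ distinct linear forms; since they are automatically dependent, Theorem \ref{T:B} immediately produces a representation $p = f_1^3 + f_2^3$ with $f_1, f_2$ distinct linear forms. Conversely, if $p = \ell_1^3 + \ell_2^3$ with $\ell_1, \ell_2$ distinct, the factorization \eqref{E:om} gives $p = (\ell_1 + \ell_2)(\ell_1 + \om \ell_2)(\ell_1 + \om^2 \ell_2)$; no two of these factors can be proportional without forcing $\ell_1, \ell_2$ to be proportional, so $p$ is square-free.

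For uniqueness, the key point is that every admissible representation lives in the same ambient subspace. If $p = f_1^3 + f_2^3$ with $f_1, f_2$ distinct linear forms, then $\langle f_1, f_2\rangle$ is two-dimensional and hence equals the entire space of linear forms; thus there is exactly one subspace $X = \langle f_1, f_2\rangle$, so the integer $m$ of Theorem \ref{T:B} equals $1$. Since $p$ square-free means $p$ is not divisible by the square of any linear form (a form of degree $r = 1$), the final clause of Theorem \ref{T:B} yields $N(p) = m = 1$, which is the asserted uniqueness.

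I expect no serious obstacle here, as Theorem \ref{T:B} does the structural work; the only thing requiring care is to make the two dimensional observations explicit, since it is precisely their degeneration that forces $m = 1$ and removes the independent content of the ``linearly dependent'' hypothesis. One should also note the role of the excluded case: if $p$ were the cube of a linear form it would fail to be square-free and could not be an honest sum of two distinct linear cubes, which is exactly why that hypothesis is imposed.
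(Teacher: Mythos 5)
Your proposal is correct and follows essentially the same route as the paper: both reduce to Theorem \ref{T:B} with $r=1$, using the fact that any three linear binary forms are automatically dependent (so a square-free cubic always admits the required factorization, while $\ell^3$ and $\ell_1^2\ell_2$ do not). You are somewhat more explicit than the paper about uniqueness, deriving $N(p)=m=1$ from the two-dimensionality of the space of linear forms, whereas the paper leaves this implicit in Lemma \ref{L:no2sums}; the content is the same.
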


\begin{proof}
In the general case, $f = \ell_1\ell_2\ell_3$ is a product of three distinct linear forms; any three 
such forms are linearly dependent. The other cases are $f =\ell^3$ and 
$f= \ell_1^2\ell_2$, and the necessary factorization is impossible.
\end{proof}

For Theorems \ref{T:2}, \ref{T:3}, \ref{T:46}, recall \eqref{E:biggies}.

\begin{theorem}\label{T:1}
A binary sextic $p(x,y)$ is an honest sum of two cubes  ($N(p) \ge 1$) 
if and only if one of the two 
conditions hold:  (i) $p = \ell^3q$, where $\ell$ is linear form and $q$ is a square-free cubic; or
(ii) $p$ is similar to $q(x^2,y^2)$, where $q$ is a square-free cubic, so $p$ is
similar to an even binary sextic. 
\end{theorem}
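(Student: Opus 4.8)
The plan is to reduce everything to Theorem~\ref{T:B} in the case $r=2$, which asserts that a sextic $p$ is an honest sum of two cubes ($N(p)\ge 1$) if and only if it admits a factorization $p=g_1g_2g_3$ into three \emph{distinct but linearly dependent} quadratic forms. Since the $g_i$ are pairwise non-proportional yet dependent, they span a two-dimensional subspace (a pencil) $V$ of the space of binary quadratic forms, and the whole theorem becomes a structural analysis of $V$. Both directions will hinge on the same dichotomy: either the three quadratics share a common linear factor, or they do not.

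For the reverse direction I would produce the factorizations explicitly and invoke Theorem~\ref{T:B}. If $p=\ell^3q$ with $q=m_1m_2m_3$ a product of three distinct linear forms, set $g_i=\ell m_i$; these are distinct, and $c_1g_1+c_2g_2+c_3g_3=\ell(c_1m_1+c_2m_2+c_3m_3)$ vanishes for some nonzero $(c_i)$ because any three linear forms in two variables are dependent, so $N(p)\ge 1$. If $p$ is similar to $q(x^2,y^2)=\prod_{i=1}^3(x^2-r_iy^2)$ with the $r_i$ distinct (allowing a factor $y^2$ when some $r_i=\infty$), set $g_i=x^2-r_iy^2$; these lie in the two-dimensional space $\langle x^2,y^2\rangle$, hence are dependent, and their product is $q(x^2,y^2)$. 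As a linear change preserves the property of being a sum of two cubes, $N(p)\ge 1$ for $p$ itself.

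For the forward direction, suppose $p=g_1g_2g_3$ as above and split on whether $V$ has a base point, i.e.\ a linear form $\ell$ dividing every member. If so, then $\ell\mid g_i$ for each $i$, so $g_i=\ell m_i$ with the $m_i$ distinct (else two $g_i$ would be proportional); thus $p=\ell^3m_1m_2m_3=\ell^3q$ with $q$ square-free, which is case~(i). If $V$ is base-point free, the crucial observation is that any two distinct members of $V$ are coprime: if $u,v\in V$ shared a factor $\ell$, then every $c_1u+c_2v$ would be divisible by $\ell$, forcing $\ell$ to be a base point. In particular $g_1$ and $g_2$ are coprime, so by the simultaneous diagonalization of Theorem~\ref{T:diag} there is a linear change $M$ after which $g_1\circ M$ and $g_2\circ M$ are both even; being independent even forms, they span $\langle x^2,y^2\rangle$, and since $g_3\circ M$ lies in their span it is even as well. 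Hence every $g_i\circ M=\alpha_ix^2+\beta_iy^2$, and $p\circ M=\prod(\alpha_ix^2+\beta_iy^2)$ is, up to scalar, $\prod(x^2-r_iy^2)=q(x^2,y^2)$ with distinct $r_i$, which is case~(ii).

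The two cases are exhaustive and each lands in exactly one of (i), (ii). The main obstacle is the base-point-free case: the key is to recognize that base-point-freeness forces pairwise coprimality of the pencil members, which is precisely what licenses the appeal to Theorem~\ref{T:diag} and pins down the even normal form $\langle x^2,y^2\rangle$. The remaining book-keeping about degenerate subcases—a repeated factor $g_i=\ell^2$, or an $r_i$ at infinity producing a $y^2$ factor—is routine and does not change which of (i) or (ii) holds.
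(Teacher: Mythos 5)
Your proof is correct and, at its core, follows the same route as the paper: your dichotomy on whether the pencil $\langle g_1,g_2,g_3\rangle$ has a base point is exactly the paper's dichotomy on whether $\gcd(f_1,f_2)$ is trivial (the two spans coincide, since $g_i=f_1+\om^i f_2$), and both arguments then invoke Theorem~\ref{T:diag} in the coprime case to reach the even normal form. The only differences are cosmetic: you route through the factorization of Theorem~\ref{T:B} rather than working directly with the two summands, and you write out the converse and the square-freeness of $q$ explicitly, which the paper leaves implicit.
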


\begin{theorem}\label{T:2}
A binary sextic $p$ has $N(p) =2$ if and only if $p$ is similar to $A_t$ for $t \in \cc$, with the
 following exceptional values: $N(A_3) = 0$,  $N(A_{-1})= 1$, $N(A_0) = N(A_{15}) = 4$ and $N(A_{-5}) = 6$. 
\end{theorem}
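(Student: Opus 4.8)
The plan is to compute $N(A_t)$ directly from the factorization criterion of Theorem \ref{T:B} and to deduce the converse from the structural classification in Theorem \ref{T:cubic}. I exploit throughout that $A_t$ is even: since $u^3 + tu^2 + tu + 1 = (u+1)(u^2 + (t-1)u + 1)$, one has $A_t = (x^2+y^2)(x^2 + \rho y^2)(x^2 + \rho^{-1}y^2)$ with $\rho + \rho^{-1} = t-1$, so for generic $t$ the six roots of $A_t$ on $\pp^1$ are the three $\pm$-pairs $\{\pm i\}$, $\{\pm\sqrt{-\rho}\}$, $\{\pm\sqrt{-\rho^{-1}}\}$, all distinct, and $A_t$ is not divisible by the square of a quadratic. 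By Theorem \ref{T:B}, $N(A_t)$ then equals the number of factorizations $A_t = g_1 g_2 g_3$ into three pairwise non-proportional, linearly dependent quadratics, equivalently the number of partitions of the six roots into three pairs whose three quadratic factors are linearly dependent.

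First I would exhibit the two representations that are always present. The even factorization displayed above has all factors in the pencil $\langle x^2, y^2\rangle$, hence dependent; the tame factorization of Theorem \ref{T:tamerep}, coming from $(x^2 + \ga xy + y^2)^3 + (x^2 - \ga xy + y^2)^3 = 2A_{3(1+\ga^2)}$, gives a second, distinct partition (its factors carry $xy$ terms). Thus $N(A_t)\ge 2$ generically. To rule out a third, I would run through the $15$ pairings of the six roots, using the $\pm$- and $x\leftrightarrow y$-symmetries of $A_t$ to reduce the cases, and impose linear dependence of the three factors as a $3\times 3$ determinant condition; this turns into a single polynomial condition in $\rho$ which, beyond the two standing solutions, holds only for finitely many $t$. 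This simultaneously gives $N(A_t) = 2$ for generic $t$ and isolates the candidate exceptional values.

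For the converse, suppose $N(p) = 2$. Then $p$ is represented by a unique honest family $\mathcal F$, which by Theorem \ref{T:cubic} is similar to some $\mathcal F_{2,\mu}$ up to a flip. The common sum of $\mathcal F_{2,\mu}$ is $p_{1,\mu} = \mu^3(\mu^6-1)B_{\mu^3 + \mu^{-3}}$, which carries the three representations $\{F_{1,\mu},F_{2,\mu}\}$, $\{F_{3,\mu},F_{4,\mu}\}$, $\{F_{5,\mu},F_{6,\mu}\}$ and so has $N \ge 3$; and the flip whose common sum is the two-cubic product $p_{2,\mu}$ is, by the stated fact that each family is similar to one of its flips, similar to $\mathcal F_{2,\mu}$ itself and hence again of $B$-type, once more forcing $N\ge 3$. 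Thus the only possibility consistent with $N(p) = 2$ is that $\mathcal F$ is similar to the flip of common sum $p_{3,\mu} \sim A_{4\mu^6 - 1}$, whence $p \sim A_t$, as claimed.

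It remains to identify the exceptional $t$, which is the delicate part. They arise from two mechanisms. The first is degeneracy of $A_t$: the even factorization has distinct factors exactly when $\rho \ne \pm 1$, i.e.\ $t \ne 3, -1$; at $t = 3$ one has $A_3 = (x^2+y^2)^3$, a cube admitting no honest factorization ($N = 0$), while at $t = -1$ one has $A_{-1} = (x^2+y^2)(x^2 - y^2)^2$, which acquires a square factor so that the even partition collapses and a recount (Theorem \ref{T:B} now only bounds $N$) leaves a single surviving representation ($N = 1$). The second mechanism is a coincidence of similarity type: for $t = 0, 15$ the sextic $A_t$ becomes similar to $Q_1 = x^6 + y^6$ and for $t = -5$ to $Q_2 = xy(x^4 - y^4)$, so the $A$- and $B$-type families coexist and $N$ jumps to $4$, resp.\ $6$ (Theorem \ref{T:46}). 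The main obstacle, where I expect the real work, is to prove that these five values exhaust the exceptions, i.e.\ to solve exactly for the $t$ at which the dependence polynomial in $\rho$ gains extra roots and at which $A_t \sim B_{t'}$. I would attack this by computing $\mathrm{PGL}_2$-invariants of the six-point configuration (the cross-ratios of the roots) as rational functions of $t$ and determining precisely where they match those of a $B_{t'}$ or force extra symmetry, the two coincident counts $N = 4, 6$ being supplied by the separate analysis of $Q_1$ and $Q_2$.
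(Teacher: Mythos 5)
Your proposal is correct and follows essentially the same route as the paper: reduce via Theorem \ref{T:cubic} to the common sums $p_{1,\la}$ (which forces $N\ge 3$) and $p_{3,\la}\sim A_{4\la^6-1}$, apply Theorem \ref{T:B} by checking the $15$ pairings of the six roots for linear dependence, treat the square-factor cases $A_3, A_{-1}$ separately, and read off the exceptional $t$ from the extra dependencies. The only cosmetic difference is that you run the $15$-case determinant check on $A_t$ in even coordinates (parameter $\rho$) while the paper does it on $p_{3,\la}$ (parameter $\al=\la^3$), and the residual work you flag (solving the dependence conditions exactly) is precisely what the paper carries out, landing on $\al\in\{\pm 2,\pm\tfrac 12,\pm i\}$, i.e.\ $t=0,15,-5$.
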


\begin{theorem}\label{T:3}
A binary sextic $p$ has $N(p) = 3$ if and only if $p$ is similar to $B_t$ 
for $t \in \cc$, except that $N(B_{\pm 2}) = 0$,
$N(B_0)= 4$ and $N(B_{\pm 5\sqrt{-2}}) = 6$. 
\end{theorem}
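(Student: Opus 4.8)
The plan is to count representations through Theorem \ref{T:B} and thereby reduce everything to a finite combinatorial problem about the six linear factors of $B_t$. Writing $B_t = (x^3-\al y^3)(x^3-\be y^3)$ with $\al\be=1$ and $\al+\be=-t$, the sextic $B_t$ has six distinct linear factors exactly when $\al\neq\be$, i.e. $t\neq\pm2$; for $t=\pm2$ we have $B_{\pm2}=(x^3\pm y^3)^2$, which I treat separately below. For $t\neq\pm2$, $B_t$ is square-free, so by Theorem \ref{T:B} $N(B_t)$ equals the number of factorizations $B_t=g_1g_2g_3$ into three distinct, linearly dependent quadratics. Such a factorization is precisely a partition of the six roots into three pairs; writing the $i$-th monic quadratic as $x^2-s_ixy+p_iy^2$, its coefficient vector is $(1,-s_i,p_i)$, so linear dependence is equivalent to collinearity in $\cc^2$ of the three points $(s_i,p_i)$. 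There are $15$ partitions, and I would organize them by the splitting of the roots into the two triples $\{\al^{1/3}\om^k\}$ and $\{\be^{1/3}\om^k\}$: six ``bi-matchings'' pairing each root of one triple with a root of the other, and nine ``mixed'' partitions using one internal pair from each triple together with one crossing pair.

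Next I would exhibit the three guaranteed representations. Among the bi-matchings, the three for which the paired products are constant ($\om^c$, $c=0,1,2$) place all three points on the horizontal line $p=\om^c$, hence are collinear; these are exactly the representations $\{F_{1,\la},F_{2,\la}\},\{F_{3,\la},F_{4,\la}\},\{F_{5,\la},F_{6,\la}\}$ in the threefold identity \eqref{E:threefold2}, where $B_t$ is similar to $p_{1,\la}$ with $t=\la^3+\la^{-3}$. This gives $N(B_t)\ge3$ for every admissible $t$.

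It then remains to rule out the other twelve partitions for generic $t$. The remaining three bi-matchings place the three points on a nondegenerate parabola on which $p$ is a nonzero multiple of $s^2$; since a line meets such a conic in at most two points, collinearity fails except on a proper closed set, and solving the determinant shows it occurs only at $t=0$. For the nine mixed partitions the symmetries $(x,y)\mapsto(\om x,\om^2y)$ and $(x,y)\mapsto(y,x)$ collapse the nine into two orbits, and a direct determinant computation reduces the collinearity condition to $t^2\in\{4,-50\}$, i.e. $t=\pm2$ (already excluded) or $t=\pm5\sqrt{-2}$. Hence for $t\notin\{0,\pm2,\pm5\sqrt{-2}\}$ only the three product-constant bi-matchings are valid and $N(B_t)=3$. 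At $t=0$ one extra bi-matching becomes collinear, giving $N(B_0)=4$ (consistent with $B_0=Q_1$); at $t=\pm5\sqrt{-2}$ exactly three mixed partitions become collinear, giving $N(B_{\pm5\sqrt{-2}})=6$ (consistent with similarity to $Q_2$). For the degenerate values, I would check that every pairing of the repeated factors of $(x^3\pm y^3)^2$ into three quadratics either repeats a factor or produces a Vandermonde-independent triple of the form $\{L_iL_j\}$, so by Theorem \ref{T:B}, $N(B_{\pm2})=0$.

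For the converse, if $N(p)=3$ then $p$ admits at least two representations, which form an honest family; by Theorem \ref{T:cubic} $p$ is then similar to a common sum of some $\mathcal F_{2,\la}$ up to a flip, and the value $N=3$ — as opposed to $N=2$ for the flipped $A_t$ sums of Theorem \ref{T:2} — forces $p$ to be similar to $B_t$. I expect the main obstacle to be the bookkeeping at the exceptional values rather than the generic count: verifying that exactly \emph{one} extra partition (not all three non-constant bi-matchings) is collinear at $t=0$, that exactly three of the nine mixed partitions are simultaneously collinear at $t=\pm5\sqrt{-2}$, and carrying out the separate square-factor analysis at $t=\pm2$. These are finite checks, but pinning down the precise counts, instead of merely the generic value $N=3$, is where the care is required.
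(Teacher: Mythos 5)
Your proposal is correct and follows essentially the same route as the paper: both reduce via Theorem \ref{T:B} to testing linear dependence for the $15$ pairings of the six linear factors of $B_t$ (the paper works with $p_{1,\la}$, $t=\la^3+\la^{-3}$), both find the same three always-dependent pairings \eqref{E:p1factors}, locate the exceptional values by determinant computations, handle $B_{\pm2}$ separately through its square factors, and obtain the converse from Theorem \ref{T:cubic} together with the $A_t$ census. One small correction: under the symmetries you name, the nine mixed partitions fall into \emph{three} orbits of size three (indexed by the difference of the excluded indices in the two root-triples), not two; this costs one extra determinant but does not affect the conclusion, since the conditions still reduce to $t^2\in\{4,-50\}$ and exactly one orbit becomes dependent at $t=\pm5\sqrt{-2}$, giving the count $3+3=6$ there.
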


\begin{theorem}\label{T:46}
The binary sextics $p$ with  $N(p) > 3$ are similar to $Q_1$ or $Q_2$:
$N(Q_1) = 4$ and $N(Q_2) = 6$; $Q_1$ is similar to $A_0,A_{15}, B_0$;
$Q_2$ is similar to $A_{-5}$ and $B_{\pm 5\sqrt{-2}}$.
\end{theorem}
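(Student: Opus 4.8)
The plan is to treat Theorem \ref{T:46} as the capstone of the census, extracting it from Theorems \ref{T:2} and \ref{T:3} rather than re-deriving everything from scratch. The decisive structural input is the dichotomy underlying the whole census: if $N(p)\ge 2$, then $p$ admits an honest identity $p=f_1^3+f_2^3=f_3^3+f_4^3$, i.e. an honest instance of \eqref{E:funda}, so by Theorem \ref{T:cubic} its family is similar, up to a flip, to some $\mathcal F_{2,\la}$ with $\la(\la^6-1)\ne 0$. Hence $p$ is similar to the common sum $p_{1,\la}$ or to one of the flip sums $p_{2,\la},p_{3,\la}$ of \eqref{E:flippo1}, \eqref{E:flippo2}. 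Using the identities $p_{1,\la}=\la^3(\la^6-1)B_{\la^3+\la^{-3}}$ and $p_{3,\la}\sim A_{4\la^6-1}$ recorded near \eqref{E:basics}, together with the flip-symmetry noted before Theorem \ref{T:cubic} that makes the remaining flip sum $p_{2,\la}$ similar to some $B_\mu$, each possibility is similar to some $A_t$ or $B_t$. Thus every sextic with $N(p)>3$ is similar to some $A_t$ or $B_t$.

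Next I would read the list of parameters with $N>3$ directly off Theorems \ref{T:2} and \ref{T:3}: on the $A_t$ family the only such values are $t\in\{0,15,-5\}$ with $N(A_0)=N(A_{15})=4$ and $N(A_{-5})=6$, and on the $B_t$ family they are $t\in\{0,\pm 5\sqrt{-2}\}$ with $N(B_0)=4$ and $N(B_{\pm 5\sqrt{-2}})=6$. Since a linear change $M$ sends each representation $p=f_1^3+f_2^3$ to the representation $p\circ M=(f_1\circ M)^3+(f_2\circ M)^3$ and preserves honesty, $N$ is a similarity invariant; so it suffices to show these six forms collapse into two similarity classes and then transport the $N$-values.

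For the $N=4$ class I would note $A_0=B_0=x^6+y^6=Q_1$ outright, and $(x+y)^6+(x-y)^6=2A_{15}$, which gives $A_{15}\sim Q_1$ under $(x,y)\mapsto(x+y,x-y)$ followed by a scaling to absorb the factor $2$. For the $N=6$ class I would use the introduction's identifications: choosing $\la$ with $\la^6=-1$ makes $4\la^6-1=-5$, so $A_{-5}\sim p_{3,\la}$, and since $(\la^3x+y)(x+\la^3 y)=\la^3(x^2+y^2)$ when $\la^3=i$, the factorization \eqref{E:flippo2} collapses to a nonzero constant times $xy(x^4-y^4)=Q_2$; hence $A_{-5}\sim Q_2$. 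For the $B$ case, choosing $\la$ with $\la^3+\la^{-3}=\pm 5\sqrt{-2}$ identifies $B_{\pm 5\sqrt{-2}}$ with a scalar multiple of $p_{1,\la}$, which I would then show is similar to $Q_2$. Invariance of $N$ then yields $N(Q_1)=N(A_0)=4$ and $N(Q_2)=N(A_{-5})=6$; since $N$ distinguishes them, $Q_1\not\sim Q_2$, and we obtain exactly two classes, as claimed.

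The step I expect to be the genuine obstacle is the identification $B_{\pm 5\sqrt{-2}}\sim Q_2$. Unlike the others it is not a one-line change of variables, because $p_{1,\la}$ is a \emph{common} sum while the clean model $Q_2\sim p_{3,\mu}$ (with $\mu^6=-1$) is a \emph{flip} sum, so these are a priori different sextics at different parameters; proving they lie in one $\mathrm{PGL}_2$-orbit amounts to matching the octahedral configuration of the six roots of $Q_2$ (namely $0,\infty,\pm1,\pm i$) against the roots of $B_{\pm 5\sqrt{-2}}$. I would handle this either by the explicit $\tau$-type substitution recorded in the introduction or, more robustly, by computing a complete invariant of the six points on each side and checking they agree. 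I would also double-check, via the completeness of Theorems \ref{T:2} and \ref{T:3}, that no other exceptional parameter value slips an $N>3$ sextic past this list.
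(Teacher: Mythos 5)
Your overall architecture matches the paper's: the reduction via Theorem \ref{T:cubic} showing that any $p$ with $N(p)\ge 2$ is similar to $p_{1,\la}$, $p_{2,\la}$ or $p_{3,\la}$, hence to some $A_t$ or $B_t$, is exactly how the paper opens its (combined) proof of Theorems \ref{T:2}, \ref{T:3} and \ref{T:46}, and your identifications $A_0=B_0=Q_1$, $A_{15}\sim Q_1$ via $(x,y)\mapsto(x+y,x-y)$, and $A_{-5}\sim p_{3,\la}\sim Q_2$ for $\la^3=\pm i$ are all correct and are the ones the paper uses. The main structural difference is that you treat Theorems \ref{T:2} and \ref{T:3} as black boxes and read the exceptional parameters off their statements, whereas the paper proves all three theorems simultaneously by enumerating the fifteen pairings of the six linear factors of $p_{1,\la}$ and $p_{3,\la}$ and detecting the extra dependencies; logically your shortcut is legitimate (the statements are true and your use of them is not circular), but your proof is not actually independent of that enumeration -- it is where the counts $4$ and $6$ and the list $\{0,15,-5\}$, $\{0,\pm 5\sqrt{-2}\}$ come from.

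The genuine gap is the one you flag yourself: the similarity $B_{\pm 5\sqrt{-2}}\sim Q_2$. Without it you only know that $A_{-5}$ and $B_{\pm5\sqrt{-2}}$ each have $N=6$; a priori they could be distinct similarity classes, so the claim that every $p$ with $N(p)>3$ is similar to $Q_1$ or $Q_2$ would fail. Of your two proposed fixes, the first does not apply: the $\tau$-substitution in the introduction sends $p_{3,\la}$ to a multiple of $A_{4\la^6-1}$, i.e.\ it relates the \emph{flip} sum to the $A$-family, and says nothing about $p_{1,\la}\sim B_{\la^3+\la^{-3}}$ versus $Q_2$. The paper closes this gap with the explicit linear change \eqref{E:bizarre},
$B_{5\sqrt{-2}}(\ze_8^2\eta x+\ze_8 y,\, x+\ze_8^3\eta y)=54\ze_8^3\eta^3\,Q_2(x,y)$ with $\eta=\tfrac{\sqrt6+\sqrt2}{2}$, whose coefficients it motivates in Section 6 by rotating the regular octahedron (the Klein set of $Q_2$) so that two faces are horizontal, which turns the root configuration into that of $x^6-5\sqrt2\,x^3y^3-y^6$. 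Your second fallback (matching a complete projective invariant of the two six-point root configurations) would work but is not carried out; to complete the proof you need to execute one of these, and the octahedral picture is the efficient route.
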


\begin{proof}[Proof of Theorem \ref{T:1}]
Suppose $p = f_1^3 + f_2^3$ is a binary sextic with $N(p) \ge 1$. If $f_1$ and $f_2$ are not distinct, then $p$ is
a cube, so $f_1$ and $f_2$ are distinct. If 
$\gcd(f_1,f_2) = \ell$ is linear, then $f_1 = \ell \ell_1$ and $f_2 = \ell \ell_2$, where $\ell_1$
and $\ell_2$ are distinct. Thus, $p = \ell^3(\ell_1^3 + \ell_2^3)$ satisfies (i).
If $f_1$ and $f_2$ are relatively
prime, then by Theorem \ref{T:diag}, we may make a linear change $M$ so that
both $f_1\circ M$ and $f_2\circ M$ are even; that is, there exist distinct linear forms
$\ell_j$ so that $(f_j\circ M)(x,y) = \ell_j(x^2,y^2)$; now let $q = \ell_1^3 + \ell_2^3$; this is (ii).
\end{proof}

\begin{theorem}\label{T:nosquare}
If $p$ is a binary sextic with a square factor, then $N(p) \le 1$.
\end{theorem}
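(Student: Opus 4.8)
The plan is to reduce everything to the factorization criterion of Theorem~\ref{T:B}. Any representation $p = f_1^3 + f_2^3$ produces the factorization $p = g_1 g_2 g_3$ with $g_k = f_1 + \om^{k-1} f_2$ into three distinct, linearly dependent quadratics, and the representation is recovered from the pencil $\langle g_1,g_2,g_3\rangle$. Since each such factorization determines its span, distinct subspaces arise from distinct unordered factorizations, and Theorem~\ref{T:B} bounds $N(p)$ by the number of these subspaces. Hence it suffices to show that a sextic with a square factor admits \emph{at most one} factorization into three distinct, linearly dependent quadratics.

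The engine is a dichotomy I would isolate as a lemma. Suppose $p = g_1 g_2 g_3$ with the $g_k$ distinct and linearly dependent, and let $\ell$ be a linear form with $\ell^2 \mid p$. Writing a nontrivial relation $c_1 g_1 + c_2 g_2 + c_3 g_3 = 0$: if $\ell$ divides two of the $g_k$, then the relation forces $\ell$ to divide the third as well (the only alternative makes two of the $g_k$ proportional, contradicting distinctness), so $v_\ell(p) \ge 3$; otherwise $\ell$ divides exactly one factor $g_k$, and then $v_\ell(g_k) = v_\ell(p) = 2$ forces $g_k = c\ell^2$. Thus the multiplicity $v_\ell(p)$ dictates the shape of \emph{every} factorization: when $v_\ell(p) \ge 3$ one has $\ell \mid g_1,g_2,g_3$, and when $v_\ell(p) = 2$ exactly one factor is $c\ell^2$.

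In the first branch, $v_\ell(p) \ge 3$, I would write $g_k = \ell m_k$; then $m_1 m_2 m_3 = p/\ell^3$, so the $m_k$ are forced to be the linear factors of the cubic $p/\ell^3$ and are automatically dependent. Distinctness of the $g_k$ requires $p/\ell^3$ to be square-free, in which case the unordered factorization $\{g_k\}$ is uniquely determined; otherwise there is no valid factorization. Either way there is at most one. The substantive branch is $v_\ell(p) = 2$, where one factor is $c\ell^2$ and the task reduces to splitting the complementary quartic $Q := p/(c\ell^2)$, coprime to $\ell$, as $g_j g_k = Q$ so that $\{c\ell^2, g_j, g_k\}$ is distinct and dependent. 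After a linear change I may take $\ell = x$, so $Q = \prod_{s=1}^4 (x - \rho_s y)$ with all $\rho_s \neq 0$. A short computation shows that $\{x^2, g_j, g_k\}$ is linearly dependent exactly when the root-pairs forming $g_j,g_k$ satisfy $s_a + s_b = s_c + s_d$, where $s = 1/\rho$; among the three pairings of the four roots this asks that a pair-sum equal half the total.

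The heart of the argument is that at most one pairing can yield a genuine factorization. If two different pairings both satisfied the dependence relation, subtracting the two equalities would force two of the $s$, hence two of the $\rho$, to coincide in precisely the pattern that makes the corresponding factor repeated, so that pairing violates distinctness; the remaining pairing then consists of squares of distinct linear forms, which are linearly independent by Lemma~\ref{L:no2sums}. Hence at most one pairing survives. Combining the two branches gives at most one factorization, so $N(p) \le 1$. The one delicate step—the main obstacle—is exactly this last point: ruling out the a priori possibility that the complementary quartic splits ``syzygetically'' in two different ways. The reciprocal-root reformulation $s_a + s_b = s_c + s_d$ is what makes the obstruction transparent, since two simultaneous equalities collapse the root configuration.
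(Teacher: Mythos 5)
Your proposal is correct and follows essentially the same route as the paper: reduce via Theorem~\ref{T:B} to counting factorizations into three distinct, linearly dependent quadratics, split on whether $\ell$ divides all three factors (multiplicity $\ge 3$, where the cubic $p/\ell^3$ forces uniqueness) or exactly one factor equals $c\ell^2$ (multiplicity $2$), and in the latter case show that two simultaneous pair-sum equalities collapse the four remaining roots so that the paired factors coincide. Your reciprocal-root condition $s_a+s_b=s_c+s_d$ is just the paper's condition $c_1+c_2=c_3+c_4$ in a different normalization, and the appeal to Lemma~\ref{L:no2sums} for independence of \emph{squares} of distinct linear forms is a misattribution (that lemma concerns cubes), but that step is not needed for the contradiction anyway.
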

\begin{proof}
Suppose $\ell^k \ | \ p$ for a linear factor $\ell$, where $k \ge 2$.
Suppose $k \ge 3$ and $p = f_1^3 + f_2^3$ for quadratic forms $f_1,f_2$. 
Then as in Lemma \ref{L:relprime}, $\ell$ must divide at least two of $\{f_1 + \om^k f_2\}$, 
and so $\ell \ | \ f_1,f_2$,
so $p$ has no other representation as a sum of two cubes. 

Now suppose $k = 2$, and after a linear change, take $\ell = y$, so that for some
$c_j \in \mathbb C$,
\[
p(x,y) = \la y^2(x+c_1y)(x+c_2y)(x+c_3y)(x+c_4y).
\]
To apply Theorem \ref{T:0}, we need to write $p = g_1g_2g_3$ for linearly dependent
factors. If $y$ divides two of the $g_j$'s, it must divide the third, which is impossible, hence we
may assume that $g_1 = y^2$. If $N(p) \ge 2$, then after reindexing if necessary, each of 
these two different sets is dependent:
\[
\begin{gathered}
\{y^2, (x+c_1y)(x+c_2y), (x+c_3y)(x+c_4y)\},\\ \{y^2, (x+c_1y)(x+c_3y), (x+c_2y)(x+c_4y)\}.
\end{gathered}
\]
But dependence implies that  $c_1 + c_2 = c_3 + c_4$ and
$c_1 + c_3 = c_2 + c_4$, so $c_3=c_2$ and $c_4= c_1$ and 
$(x+c_1y)(x+c_2y)=(x+c_3y)(x+c_4y)$, so the factors are not distinct. 
\end{proof}

We isolate those exceptional cases in Theorems \ref{T:2} and \ref{T:3} with
square factors.
\begin{theorem}\label{T:small}
We have $N(A_3) = 0$, $N(B_{\pm 2}) = 0$, and $N(A_{-1}) = 1$.
\end{theorem}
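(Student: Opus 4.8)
The plan is to reduce everything to the factorization criterion of Theorem \ref{T:B} by first writing each sextic as a product of linear forms:
\[
A_3 = (x^2+y^2)^3, \quad B_2 = (x^3+y^3)^2, \quad B_{-2} = (x^3-y^3)^2, \quad A_{-1} = (x-y)^2(x+y)^2(x^2+y^2).
\]
Each of these is divisible by the square of a linear form, so Theorem \ref{T:nosquare} already gives $N(p)\le 1$ in every case. The whole statement therefore reduces to deciding, for each $p$, whether there is a factorization $p = g_1g_2g_3$ into three \emph{distinct but linearly dependent} quadratic forms: for $A_3,B_{\pm2}$ I must show that none exists, and for $A_{-1}$ I must exhibit exactly one.

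For the vanishing cases I would enumerate all factorizations of $p$ into three quadratics by partitioning its six linear factors (with multiplicity) into three pairs, discard the partitions with a repeated factor, and show every surviving distinct triple is linearly independent. For $A_3 = L_+^3L_-^3$ with $L_\pm = x\pm iy$, the only quadratic divisors are $L_+^2,\,L_+L_-,\,L_-^2$, and the unique distinct triple with product $L_+^3L_-^3$ is $\{L_+^2,L_+L_-,L_-^2\}$. For $B_2 = M_1^2M_2^2M_3^2$ with $M_k = x+\om^{k-1}y$, a short exponent count leaves exactly $\{M_1^2,M_2^2,M_3^2\}$ and $\{M_1M_2,M_1M_3,M_2M_3\}$. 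The two triples in which each member shares a linear factor with the others, namely $\{L_+^2,L_+L_-,L_-^2\}$ and $\{M_1M_2,M_1M_3,M_2M_3\}$, are independent by the elementary device of evaluating a putative relation at a zero of a well-chosen factor, which annihilates two of the three terms and forces the third coefficient to vanish. The remaining triple $\{M_1^2,M_2^2,M_3^2\}$ is independent because squares of distinct linear forms are linearly independent (the $3\times 3$ Vandermonde underlying Lemma \ref{L:no2sums}). Hence $N(A_3)=N(B_2)=0$, and $N(B_{-2})=0$ follows at once from $B_{-2}(x,y)=B_2(x,-y)$ and the similarity-invariance of $N$.

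For $A_{-1}$, since $N(A_{-1})\le 1$ is already known, it suffices to produce one honest representation. Here the triple $g_1=(x-y)^2,\ g_2=(x+y)^2,\ g_3=x^2+y^2$ consists of distinct quadratics with the dependence $g_1+g_2=2g_3$, so the converse direction of Theorem \ref{T:B} yields $A_{-1}=f_1^3+f_2^3$; the two summands are automatically distinct (equivalently, $A_{-1}$ is not a perfect cube, as its irreducible factors occur to exponents $2,2,1,1$), so the representation is honest and $N(A_{-1})=1$.

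The only genuine work lies in the independence checks for $A_3$ and $B_{\pm2}$, and the main obstacle is simply confirming that the enumeration of pairings is exhaustive. This is kept short by the fact that a quadratic divisor of $A_3$ is one of only three forms, and a quadratic divisor of $B_2$ is one of six, so the bookkeeping is finite and the ``evaluate at a zero'' trick disposes of most triples without any determinant computation.
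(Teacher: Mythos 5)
Your proposal is correct and follows essentially the same route as the paper: bound $N(p)\le 1$ via the square factors, reduce to the factorization criterion of Theorem \ref{T:B}, check (in)dependence of the feasible triples, and for $A_{-1}$ exhibit the one honest representation coming from $\{(x-y)^2,(x+y)^2,x^2+y^2\}\subset\langle x^2+y^2,xy\rangle$. The only cosmetic difference is that the paper trims the enumeration for $B_{\pm2}$ by noting that any linear form whose square exactly divides $p$ must itself appear squared among three \emph{dependent} quadratic factors, so it never needs to test $\{M_1M_2,M_1M_3,M_2M_3\}$, which you instead dispose of directly by your (valid) evaluation trick.
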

\begin{proof}
By the first argument of the proof of Theorem \ref{T:nosquare}, since $A_3(x,y) = (x^2 + y^2)^3$,
in any representation $A_3 = f_1^3+f_2^3$, both $f_1$ and $f_2$ are multiples of $x^2+y^2$,
so that they are not distinct. This also follows from  Liouville's solution for 
Fermat's Last Theorem in polynomials (see \cite[pp.263-265]{Rib} for a proof).

We have seen that if $\ell^2$ (but not $\ell^3$) divides a sextic $p$ and $p$ has a factorization that
partitions into three dependent factors, then one of those factors must be $\ell^2$. 
Thus the only feasible  partitions for $B_{\pm 2}(x,y) = (x^3 \pm y^3)^2$ are
$\{(x\pm y)^2,(x\pm \om y)^2,(x\pm \om^2 y)^2\}$, 
which are  linearly independent; thus $N(B_{\pm 2}) = 0$.

Finally, consider $A_{-1}$, which factors as  $(x-y)^2(x+y)^2(x^2+y^2)$. Each of the two squares
must be a factor, and  $\{(x-y)^2,(x+y)^2,x^2+y^2\} \subset \langle x^2+y^2, xy \rangle$. There is a representation for $2A_{-1}$ in \eqref{E:tameo} with
$\ga = \sqrt{-4/3}$. Thus $N(A_{-1}) = 1$.
\end{proof} 

It is worth mentioning that  $A_{-1}(x,y) = (x^2 - y^2)^2(x^2 + y^2)$,
so $A_{-1}(x,y) = q_1(x^2,y^2)$, where $q_1(x,y) = (x-y)^2(x+y)$ is not square-free. 
But $\tilde A_{-1}(x,y) = A_{-1}(x+y,x-y) = 32x^4y^2+32x^2y^4 = q_2(x^2,y^2)$, where 
$q_2(x,y) = 32xy(x+y)$ {\it is} square-free. Although $A_{-1}$ and $\tilde A_{-1}$ are similar, 
$q_1$ and $q_2$ are not.

Now suppose that $N(p) \ge 2$. By Theorem \ref{T:cubic}, we know that after a linear change,
$p$ appears as the common sum in \eqref{E:threefold2}, \eqref{E:flippo1} or \eqref{E:flippo2},
and in the first two cases, $N(p) \ge 3$. Since \eqref{E:flippo1} is a linear change of 
\eqref{E:threefold2}, we may ignore it. We now apply Theorem \ref{T:B} to $p_{3,\la}$ and
to $p_{1,\la}$, which have already been conveniently split into six linear factors. There
are 15 ways to divide six factors into three unordered pairs.  

\begin{proof}[Proof of Theorems \ref{T:2}, \ref{T:3} and \ref{T:46}]
Up to a constant which can be ignored, we have
  $p_{3,\la}(x,y) = x y (x-y)(x+y) (\al x + y)(x + \al y)$, where 
$\al = \la^3 \notin \{0, -1,1\}$, which cause repeated factors. It is not hard to check the 15 
possibilities, and we suppress the details. 
In two cases, the factors  are always dependent:
\[
\begin{gathered}
\{x(\al x + y), y(x + \al y), (x+y)(x-y)\} = \langle \al x^2 + x y, xy + \al y^2 \rangle,\\
 \{x(x + \al y), y(\al x + y), (x+y)(x-y)\} = \langle  x^2 + \al x y, \al xy + y^2 \rangle.
 \end{gathered}
 \]
There are two cases when
there are multiple dependencies. If $\al \in \{\pm2,\pm\frac 12\}$,  there are
two additional cases of dependency, and if $\al = \pm i$, there are four additional cases.
Thus, $N(p_{3,\la}) = 2$ for $\la(1-\la^6) \neq 0$ unless $\al \in \{\pm 2, \pm \frac 12, \pm i\}$.

If $\al = \la^3 = \pm i$, then up to powers of $\om$, $\la^2 = -1$. In the language of 
Theorem \ref{T:tamerep}, $\frac{r_\gamma + s_\gamma}2 = \la^2 \implies
r_\gamma + s_\gamma = -2 = (8 + 6\gamma^2)^{1/3} 
\implies 3(1+\gamma^2) = - 5$, so $p_{3,\pm i}$ is similar to $A_5$.
If $\al = \pm2, \pm \frac 12$, then $\la^2 = 2^{\pm 2/3}$ and 
$r_\gamma + s_\gamma = 2^{1/3}, 2^{5/3}  \implies 8 + 6\gamma^2 = 2, 32
\implies 3(1+\gamma^2) = 0, 15$, so $p_{3,\la}$  is similar to $A_0 = Q_1$ or 
$A_{15}$. 

Up to a constant, 
\[
p_{1,\la}(x,y) =(\la x + y) (\la x + \om y)(\la x + \om^2y)
(x + \la y)(x + \la \om y)(x + \la \om^2 y).
\]
 As we would hope, there are three cases in which the factors are always dependent:
\begin{equation}\label{E:p1factors}
\begin{gathered}
\{(\la x + y)(x + \la y), (\la x + \om y)(x + \la \om^2 y), (\la x + \om^2y)(x + \la \om y)\},\\
\{(\la x + y)(x + \la \om y), (\la x + \om y)(x + \la y), (\la x + \om^2y)(x + \la \om^2 y)\}, \\
\{(\la x + y)(x + \la \om^2 y), (\la x + \om y)(x + \la \om y), (\la x + \om^2y)(x + \la y)\}; 
 \end{gathered}
\end{equation}
the subspaces are  $\langle x^2 + \om^k y^2, xy \rangle$.
There are a few cases with multiple dependencies: when $\la = \pm i \implies \al = \pm i$,
there is one extra case. In this case, $p_{1,\pm i}(x,y) = \pm 2i Q_1(x,y)$. The other
cases in which a dependency occurs are when $\la^4 + 4\la^2 + 1$ = 0, up to $\la \mapsto 
\om^k \la$. For example, suppose
\[
\begin{gathered}
 \{(\la x + y)(x + \la y),( x +\la \om y)( x +\la \om^2 y), (\la x + \om y)(\la x + \om^2 y) \} \\
 = \{\la x^2 + (\la^2 + 1) xy + \la y^2, x^2 - \la x y + \la y^2, \la^2 x^2 - \la x y + y^2\}
 \end{gathered}
 \]
 is linearly dependent. This happens if and only if 
 \[
 \begin{vmatrix}
 \la & \la^2+1 & \la \\ 1 & -\la & \la^2 \\ \la^2 & - \la & 1
 \end{vmatrix}
= (\la^2-1)(\la^4 + 4\la^2 + 1) = 0.
 \]
In computations that Ramanujan could probably do in his sleep,
\begin{equation}
\begin{gathered}
\la^4 + 4\la^2 + 1 = 0 \implies \la^2 = -2 \pm \sqrt 3 \implies 
\la = \pm \left( \tfrac{\sqrt 6 \pm' \sqrt 2}2\right) i \\ \implies \la^3 + \la^{-3} = \pm 5\sqrt{-2}.
\end{gathered}
\end{equation}
Since $B_t$ and $B_{-t}$ are similar via $y \mapsto -y$,  we focus on $B_{5\sqrt{-2}}$.
Let $\eta = \frac{\sqrt 6 + \sqrt 2}2$, so $\eta i$ is a root. 
We have a linear change with bizarre coefficients:
\begin{equation}\label{E:bizarre}
\begin{gathered}
B_{5\sqrt{-2}}(\ze_8^2\eta x + \ze_8y, x + \ze_8^3\eta y) 
= 54\ze_8^3\eta^3 Q_2(x,y),
\end{gathered}
\end{equation}
showing that $B_{5\sqrt{-2}}$ is similar to $Q_2$. We give a geometric explanation for
\eqref{E:bizarre} in the next section. 
\end{proof}

The instance of \eqref{E:funda} with the simplest coefficients is probably
\begin{equation}\label{E:cubic}
\begin{gathered}
(x^2 + x y - y^2)^3 + (x^2 - x y - y^2)^3  = 2(x^2)^3 + 2(-y^2)^3 = 2 x^6 - 2y^6 \\
= (\om x^2 + x y - \om^2 y^2)^3 + (\om x^2 - x y - \om^2 y^2)^3 \\= 
(\om^2 x^2 + x y - \om y^2)^3 + (\om^2 x^2 - x y - \om y^2)^3.
\end{gathered}
\end{equation}
With $(x,y) \mapsto (x+y,x-y)$, \eqref{E:cubic} is due to  Girardin in 1910 (see \cite[p.550]{D}; 
 the earliest exact version of \eqref{E:cubic} I've found is by Elkies in 1995 (see \cite[p.542]{DG}). 
Observe that \eqref{E:cubic} is simply \eqref{E:threefold} with $\la = i$ and $y \mapsto iy$, and
it also a scaling of $Q_1$. (We have $2x^6 -2y^6 = Q_1(r x, s y)$ if $r^6 = 2, s^6 = -2$.)
Unsurprisingly, since $\la = i$, a flip of \eqref{E:cubic} is similar to $Q_2$:
\begin{equation}\label{E:Q1flip}
(\om x^2 + x y - \om^2 y^2)^3 - (\om^2 x^2 + x y - \om y^2)^3 = -3\sqrt{-3}(x^5y-xy^5).
\end{equation}
Finally, we remark that while \eqref{E:cubic} is presented as a Type$(-1)$ family, we have
\[
(x^2 + x y - y^2) + (x^2 - x y - y^2) = 2^{-1/3}(2^{1/3}x^2 + (-2^{1/3}y^2)),
\]
which gives a Type$(2^{2/3})$ family from \eqref{E:tameo}, with $y\mapsto iy$.
 Thus the Type parameter may vary when more than three
representations occur. 

\section{More on the extra representations}
As we saw in the last section, 
there are two special cases of sextics with more than three representations and we
treat them separately. First, note that
\[
Q_1(x,y) = x^6 + y^6 = A_0(x,y) = B_0(x,y); A_{15}(x,y) = \tfrac 12 A_0(x+y,x-y).
\]
For purposes of analyzing the
factorizations, we note that with $\la = i$, it is easier to use powers of $\nu:= \ze_{12}$:
\[
Q_1(x,y) = (x - \nu y)(x-\nu^3 y)(x-\nu^5 y)(x - \nu^7 y)(x-\mu^9 y)(x - \nu^{11}y).
\]
Keeping in mind that $i = \nu^3, \om = \nu^4$, and rearranging
\eqref{E:p1factors} a bit, we have that
the three dependent factorizations of $Q_1$ are:
\[
\begin{gathered}
\{(x+ \nu y)(x + \nu^{11} y), (x + \nu^3 y)(x + \nu^9 y), (x + \nu^5 y)(x + \nu^7 y)\},\\
\{(x+ \nu y)(x + \nu^{3} y), (x + \nu^7 y)(x + \nu^9 y), (x + \nu^5 y)(x + \nu^{11} y)\},\\
\{(x+ \nu y)(x + \nu^{7} y), (x + \nu^3 y)(x + \nu^5 y), (x + \nu^9 y)(x + \nu^{11} y)\}.
 \end{gathered}
\]
These live in $\langle x^2 + y^2, x y \rangle , \langle x^2 + y^2, \om x y \rangle , 
\langle x^2 + \om^2 y^2, x y \rangle $ respectively. The fourth dependent factorization is
\[
\{(x+ \nu y)(x + \nu^{7} y), (x + \nu^3 y)(x + \nu^9 y), (x + \nu^5 y)(x + \nu^{11} y)\} 
\subseteq \langle x^2, y^2 \rangle.
\]
The best way of visualizing the four equal pairs of sums seems to be \eqref{E:cubic}.

The other case is somewhat more mysterious. Since $Q_2(x,y) = xy(x^4-y^4)$, it is simple to 
work out all fifteen factorizations into three quadratics. The following six are dependent:
\[
\begin{gathered}
\{xy,(x+y)(x+iy),(x-y)(x-iy)\} \subseteq \langle x^2+i y^2, xy \rangle,\\
\{xy,(x+y)(x-iy),(x-y)(x+iy)\} \subseteq \langle x^2-i y^2, xy \rangle, \\
\{x(x+y),y(x-y),(x+iy)(x-iy)\} \subseteq \langle x^2+xy, x^2+y^2 \rangle, \\
\{x(x+iy),y(x-iy),(x+y)(x-y)\} \subseteq \langle x^2+ixy, x^2-y^2 \rangle, \\
\{x(x-y),y(x+y),(x+iy)(x-iy)\} \subseteq \langle x^2-xy, x^2+y^2 \rangle, \\
\{x(x-iy),y(x+iy),(x+y)(x-y)\} \subseteq \langle x^2-ixy, x^2-y^2 \rangle.
\end{gathered}
\]
We could simply  write $Q_2$ explicitly
as an element in $\langle F, G \rangle^3$ in these six cases.
 It is more interesting to derive them from earlier work; see \eqref{E:Q21}, 
 \eqref{E:Q22}, \eqref{E:Q23} below.

First, observe that $r_{3,i}(x,y) = r_{3,-i}(x,y) = -3\sqrt {-3}\ Q_2(x,y) =(\sqrt{-3})^3 Q_2(x,y)$.
One would think that this gives four representations of $Q_2$, coming from \eqref{E:flippo2};
however the representation for $\la = -i$ is a permutation of that from $\la = i$, and there are
only two distinct ones: 
\begin{equation}\label{E:Q21}
\begin{gathered}
 -3\sqrt {-3}\ Q_2(x,y) = (\nu^5 x^2 + xy + \nu y^2)^3 + (\nu^7 x^2 - xy +\nu^{11}y^2)^3, \\
  3\sqrt {-3}\ Q_2(x,y) = (\nu^{11} x^2 + xy + \nu^7 y^2)^3 + (\nu x^2 - xy +\nu^{5}y^2)^3. \\
\end{gathered}
\end{equation}
These come from $\langle x^2-ixy, x^2-y^2 \rangle$ and $\langle x^2+ixy, x^2-y^2 \rangle$
respectively. However, $Q_2(x,y) = -iQ_2(x,iy)$, so 
\begin{equation}\label{E:lasttrick}
Q_2(x,y) = f_1(x,y)^3 + f_2(x,y)^3 \implies Q_2(x,y) = (if_1(x,iy))^3 + (if_2(x,iy))^3.
\end{equation}
 In this way,  we immediately obtain two more representations:
\begin{equation}\label{E:Q22}
\begin{gathered}
 -3\sqrt {-3}\ Q_2(x,y) = (\nu^{10} x^2 + xy + \nu^8 y^2)^3 + (\nu^8 x^2 - xy +\nu^{10}y^2)^3, \\
  -3\sqrt {-3}\ Q_2(x,y) = (\nu^{4} x^2 + xy +\nu^{2}y^2)^3+ (\nu^2 x^2 - xy + \nu^4 y^2)^3. \\
\end{gathered}
\end{equation}
These are in $\langle x^2+xy, x^2+y^2 \rangle$ and $\langle x^2-xy, x^2+y^2 \rangle$, as one
would expect; \eqref{E:lasttrick} simply permutes the equations, and we get no more. 
Since $\nu^4 = \om$ and $\nu^2 = - \om^2$, the second equation in \eqref{E:Q22} recovers
\eqref{E:Q1flip}.

Finally, $Q_2(\frac{x+y}{\sqrt 2}, \frac{x-y}{\sqrt 2}) = Q_2(x,y)$, so after some simplification, 
we obtain the final two representations of $Q_2$: 
\begin{equation}\label{E:Q23}
\begin{gathered}
 6\sqrt {-6}\ Q_2(x,y) = (\ze_8^5 x^2 + \sqrt{6}\ xy +\ze_8^7y^2)^3+ (\ze_8 x^2 + \sqrt{6}\ xy + \ze_8^3 y^2)^3.\\
 6\sqrt {-6}\ Q_2(x,y) = (\ze_8^7 x^2 - \sqrt{-6}\ xy +\ze_8^5y^2)^3+ (\ze_8^3 x^2 - \sqrt{-6}\ xy + \ze_8 y^2)^3.
\end{gathered}
\end{equation}
These are in  $\langle x^2+iy^2, xy \rangle$ and $\langle x^2-iy^2, xy \rangle$. Although it 
might seem  daunting to consider checking whether any two of these six equations are similar, the 
fact that they live in different subspaces shows that this is impossible.

Finally, we discuss the connection  of $Q_2$ and $B_{5\sqrt{-2}}$. To do so, we need
an old idea of Felix Klein; see also \cite[p.731]{Re3}. 
Associate to each non-zero linear form $\ell(x,y) = s x - t y$
the image of $t/s \in \cc^*$ on the unit sphere $S^2$ under the
Riemann map and vice-versa. (Assign $\ell(x,y) = y$ to $\infty$ and $(0,0,1)$.)
The {\it Klein set} of $p(x,y)= \prod_{j=1}^k (s_j x - t_j y)$
  is the image of the $k$ points $t_j/s_j$ on $S^2$ under the Riemann map. 
Every rotational symmetry of the Klein set of $p$ has an interpretation as a 
symmetry of $p$ under a linear change. 

There are two particularly symmetric six-point sets on $S^2$.
One is a hexagon along a great circle, say the equator. Note that 
$Q_1(x,y)= x^6 + y^6 = \prod_{j=0}^5(x + \zeta_{12}^{2j+1} y)$
has such a hexagon as its Klein set. The other natural  choice is the vertex set of a regular 
octahedron, and the Klein set of $Q_2$ is $\{\pm e_k\}$:
\[
 Q_2(x,y) = xy(x-y)(x+y)(x-iy)(x+iy) = xy(x^4-y^4).
\]
The two symmetries of $Q_2$ mentioned above come from rotating the octahedron by
$\frac{\pi}2$ on the $z$-axis andon the $y$-axis.

One may rotate an octahedron so that the top and bottom are antipodal triangular
faces parallel to the equator.  One set of coordinates of the vertices is:
 \begin{equation}\label{E:kappa}
 \left\{ \pm \left(\tfrac 2{\sqrt 6},0,\tfrac{\sqrt{2}}{\sqrt{6}}\right),
  \pm \left(\tfrac {-1}{\sqrt 6},\pm' \tfrac{\sqrt{3}}{\sqrt{6}}, \tfrac{\sqrt{2}}{\sqrt{6}}\right)\right\}.
\end{equation}
The cubic which corresponds to the triangle in the northern hemisphere is
\[
(x - \la_0 y)(x - \om \la_0 y)(x - \om^2\la_0 y) = x^3 - \tfrac{5 + 3\sqrt 3}{\sqrt 2} y^3,\quad  \la_0 =
 \tfrac{\sqrt 6 + \sqrt 2}2.
\]
Similarly, the cubic for the southern hemisphere is
\[
(x +\la_0^{-1} y)(x + \om \la_0^{-1} y)(x + \om^2\la_0^{-1} y) = x^3 + \tfrac{5 + 3\sqrt 3}{\sqrt 2} y^3.
\]
Multiplying these together, we get another Klein polynomial for the octahedron:
\[
\tilde Q_2(x,y) = x^6 - 5\sqrt 2\ x^3 y^3 - y^6 \implies \tilde Q_2(x,iy) = x^6 + 5\sqrt 2 i x^3 y^3 + y^6 = B_{5\sqrt{-2}}(x,y).
\]
The rotation relating $\{\pm e_k\}$ into \eqref{E:kappa} inspired the coefficients of
\eqref{E:bizarre}. 

There are, in general, $\frac {(3r)!}{3!(r!)^3}$ ways to arrange the $3r$ linear factors 
of a form $p$ into three factors of degree $r$, and by Theorem \ref{T:B}, this gives an upper
bound on the number of ways to write $p$ as a sum of two cubes. It would be interesting
to know how the actual bound grows for $p$. The natural analogues of $Q_1,Q_2$ are
$x^{3r} + y^{3r}$ and $xy(x^{3r-2} - y^{3r-2})$. 

\section{Other approaches to sums of two cubes} 

The proof of the Euler-Binet parameterization of all solutions, found for example in 
 \cite[pp.199-201]{HW}, can easily be adapted
to fields of characteristic zero. For our purposes, we look at rational functions over $\mathbb C$.
\begin{theorem}[Euler-Binet]\label{T:EB}
Suppose $F= \cc(x_1,\dots,x_n)$ and suppose
\begin{equation}\label{E:eb1}
p = f_1^3 + f_2^3 = f_3^3 + f_4^3.
\end{equation}
for pairwise distinct $f_1,f_2,f_3,f_4 \in F$. 
Then there exist  $\mu, a, b \in F$ so that
\begin{equation}\label{E:ebrats}
\begin{gathered}
f_1 = \mu(1-(a-3b)(a^2+3b^2)), \quad f_2 = \mu((a+3b)(a^2+3b^2)-1),\\
f_3 = \mu((a + 3b)-(a^2 + 3b^2)^2), \quad f_4 = \mu ((a^2 + 3b^2)^2 - (a - 3b)).
\end{gathered}
\end{equation}
Conversely, if $f_1, f_2, f_3,f_4$ are given by \eqref{E:ebrats} in terms of $\mu, a, b$, then
\begin{equation}
p = f_1^3 + f_2^3 =f_3^3 + f_4^3 = 18\mu^3b(a^2 + 3b^2)(1 - (a + b)^3 - (a - b)^3 + (a^2 + 3b^2)^3)). 
\end{equation}
\end{theorem}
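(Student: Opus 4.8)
The plan is to mimic the classical projective argument over $\mathbb{Q}$, but carried out formally in the field $F = \mathbb{C}(x_1,\dots,x_n)$. The starting observation is that \eqref{E:eb1} can be rewritten, using $f_1^3+f_2^3 = f_3^3+f_4^3$, as
\[
(f_1+f_2)(f_1^2 - f_1 f_2 + f_2^2) = (f_3+f_4)(f_3^2 - f_3 f_4 + f_4^2).
\]
The first step is to introduce the homogeneous substitution that trivializes the sum of cubes. Following Euler–Binet, I would set $f_1 + f_2 = u$, $f_3 + f_4 = v$ (both nonzero since the $f_j$ are pairwise distinct) and write $f_1 - f_2 = s$, $f_3 - f_4 = t$. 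Then $f_1^3 + f_2^3 = \tfrac14 u(u^2 + 3 s^2)$ and similarly for the second pair, so \eqref{E:eb1} becomes
\begin{equation}\label{E:ebkey}
u(u^2 + 3 s^2) = v(v^2 + 3 t^2).
\end{equation}

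The second step is the key algebraic manipulation. Since $F$ is a field, I may introduce the parameters $a, b \in F$ defined by $a + 3b = v/u$ (capturing the ratio of the two sums) and $b$ chosen so that $a^2 + 3b^2$ matches the ratio of the quadratic factors; concretely one solves the $2\times 2$ linear/quadratic system so that $v = u(a+3b)$ and, after substituting into \eqref{E:ebkey}, $t$ and $s$ are forced to be expressible through $a^2+3b^2$. The arithmetic of the norm form $N(a,b) = a^2 + 3b^2$ (the norm from $\mathbb{Q}(\sqrt{-3})$) is what makes the cubic identity close up; the classical point is that $u(u^2+3s^2) = v(v^2+3t^2)$ is the statement that two elements of $F(\sqrt{-3})$ have equal norm, and equal norms differ by a unit-ratio which I parameterize by $a+b\sqrt{-3}$. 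Setting $\mu$ to absorb the overall common scale $u$, back-substitution into $f_1 = \tfrac12(u+s)$, etc., yields exactly the four expressions in \eqref{E:ebrats}. I expect this to be the main obstacle: the bookkeeping of which combination of $a,b,\mu$ reproduces the precise asymmetric forms $(a-3b)$ versus $(a+3b)$ and the appearance of $(a^2+3b^2)^2$ in $f_3,f_4$ but $(a^2+3b^2)$ in $f_1,f_2$ must be matched carefully, and one must verify nothing divides by zero given only the honesty hypothesis (pairwise distinctness of the $f_j$).

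For the converse, no cleverness is needed: given $\mu,a,b$, one simply substitutes \eqref{E:ebrats} into $f_1^3 + f_2^3$ and into $f_3^3 + f_4^3$ and checks directly that both equal the displayed product $18\mu^3 b(a^2+3b^2)(1-(a+b)^3-(a-b)^3+(a^2+3b^2)^3)$. This is a finite polynomial identity in $\mu,a,b$ over $\mathbb{Q}$, hence valid in $F$, and can be verified by expansion (or symbolically). The only genuine content is the forward direction; the converse certifies that the parameterization is complete and consistent, and it also explains the factored shape of $p$. Throughout, the argument uses only that $F$ is a field of characteristic zero containing $\sqrt{-3}$, so it applies verbatim to $\mathbb{C}(x_1,\dots,x_n)$.
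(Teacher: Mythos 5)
Your outline follows the same strategy as the paper's proof --- the substitution $f_1=g_1+g_2$, $f_2=g_1-g_2$, $f_3=g_3+g_4$, $f_4=g_3-g_4$ reducing \eqref{E:eb1} to $g_1(g_1^2+3g_2^2)=g_3(g_3^2+3g_4^2)$, parameters $a,b$ tied to the norm form $a^2+3b^2$, and a direct expansion for the converse --- but the forward direction has a genuine gap exactly where you predict the ``main obstacle'' will be. First, the one concrete formula you offer is not right: you set $a+3b=v/u$, whereas in the actual parameterization $f_1+f_2=6\mu b(a^2+3b^2)$ and $f_3+f_4=6\mu b$, so the relevant relation is $a^2+3b^2=u/v$. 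More seriously, the step ``equal norms in $F(\sqrt{-3})$ differ by a ratio parameterized by $a+b\sqrt{-3}$'' cannot be invoked verbatim here: since $\sqrt{-3}\in F$, the extension $F(\sqrt{-3})/F$ is trivial, $u^2+3s^2$ splits into linear factors over $F$, and there is no Galois conjugation with which to read off ``$a$'' and ``$b$'' from the single quotient $\frac{g_3+g_4\sqrt{-3}}{g_1+g_2\sqrt{-3}}$. The paper flags precisely this obstruction and instead \emph{defines} $a=\frac{g_1g_3+3g_2g_4}{g_1^2+3g_2^2}$, $b=\frac{g_1g_4-g_2g_3}{g_1^2+3g_2^2}$ (what one gets by averaging the two conjugate quotients), then verifies directly the identities $ag_1-3bg_2=g_3$, $bg_1+ag_2=g_4$, $a^2+3b^2=g_1/g_3$. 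These give $cg_1=dg_2$ with $c=a(a^2+3b^2)-1$ and $d=3b(a^2+3b^2)$, whence $g_1=\mu d$, $g_2=\mu c$ and the formulas \eqref{E:ebrats}. Your proposal never produces these definitions, and without them the derivation does not close up.

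Second, there is a degenerate case you do not address: if $c=d=0$ the scalar $\mu$ cannot be extracted. The paper shows $c=d=0$ forces $b=0$ and then $af_1=f_3$, $af_2=f_4$, contradicting the honesty of \eqref{E:eb1}; this is the only place the pairwise-distinctness hypothesis is used, so it cannot be dismissed as routine checking that ``nothing divides by zero.'' Your treatment of the converse --- a polynomial identity in $\mu,a,b$ checked by expansion --- is exactly what the paper does and is fine.
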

\begin{proof}
Define $g_i$'s by
\begin{equation}\label{E:some}
f_1 = g_1 + g_2,\quad f_2 = g_1 - g_2,\quad f_3 = g_3 + g_4,\quad f_4 = g_3 - g_4, 
\end{equation}
so that \eqref{E:eb1} becomes 
\[
p = 2g_1(g_1^2 + 3g_2^2) = 2g_3(g_3^2 + 3g_4^2).
\]
Since $p \neq 0$, $g_1^2 + 3g_2^2\neq 0$ as well, and we may define
\begin{equation}\label{E:eb25}
a = \frac{g_1g_3+3g_2g_4}{g_1^2+3g_2^2}, \quad b = \frac{g_1g_4-g_3g_2}{g_1^2 + 3g_2^2}.
\end{equation}
Observe that
\begin{equation}\label{E:eb3}
\begin{gathered}
ag_1 -3bg_2 = g_3;\qquad  bg_1 + ag_2 = g_4; \qquad
a^2 + 3b^2 = \frac{g_3^2 + 3 g_4^2}{g_1^2+3g_2^2} = \frac{g_1}{g_3}.
\end{gathered}
\end{equation}
(In the original derivation, taken over $\mathbb Q$, $(a,b)$ are defined
by $a \pm b \sqrt{-3}= \frac{g_3 \pm g_4\sqrt{-3}}{g_1\pm g_2\sqrt{-3}}$,
which is unambiguous. We cannot do this here, because some coefficient of $g_j$ might involve
$\sqrt{-3}$,  but \eqref{E:eb25} recaptures the essence.) Now let
\begin{equation}\label{E:eb4}
\begin{gathered}
c = a(a^2+3b^2)-1, \qquad d = 3b(a^2+3b^2) \\ \implies  cg_1 - dg_2 
= (a^2 + 3b^2)(ag_1-3bg_2) - g_1 = (a^2+3b^2)g_3 - g_1=0,
\end{gathered}
\end{equation}
so $cg_1 = dg_2$.
Suppose $c=d=0$. Looking at $d=0$, $a^2+3b^2=0$  implies
$c = -1$), so $b=0$, and $ag_1 = g_3$,  and
$ag_2 = g_4$ by \eqref{E:eb3}, so that $af_1 = f_3$ and $af_2 = f_4$ implying
that \eqref{E:eb1} is not honest. Thus $(c,d) \neq (0,0)$, and
we write  $(g_1,g_2)$ with $\mu \in F$ as
\begin{equation}\label{E:g1g2}
g_1 = \mu d = 3 \mu b(a^2+3b^2),\quad  g_2 = \mu c = \mu (a(a^2+3b^2)-1).
\end{equation}
Now solve for $g_3$ and $g_4$ from \eqref{E:eb3}:
\begin{equation}\label{E:g3g4}
\begin{gathered}
g_3 = a g_1 -3b g_2 = 3\mu b, \quad g_4 = bg_1 + ag_2 = \mu((a^2 + 3b^2)^2 - a).
\end{gathered}
\end{equation}
Plug back in to \eqref{E:some} and \eqref{E:eb25}  to get \eqref{E:ebrats}.
\end{proof}

 \begin{corollary}\label{C:EB}
Suppose $f_1,f_2,f_3, f_4 $ are forms of degree $k$ satisfying \eqref{E:eb1}.
Then up to a possible common factor, there exist forms $p, q, r$ of degree $\le 2k$ so that
\begin{equation}\label{E:ebratpar}
\begin{gathered}
f_1 = r(r^3-(p-3q)(p^2+3q^2)), \quad f_2 = r((p+3q)(p^2+3q^2)-r^3),\\
f_3= r^3(p + 3q)-(p^2 + 3q^2)^2, \quad f_4 = (p^2 + 3q^2)^2 - r^3(p - 3q).
\end{gathered}
\end{equation}
\end{corollary}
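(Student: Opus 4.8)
The plan is to homogenize the Euler--Binet parametrization of Theorem \ref{T:EB} by observing that, when the $f_j$ are forms, the auxiliary quantities $a,b$ produced in its proof come out automatically as ratios of forms of equal degree, i.e.\ as rational functions homogeneous of degree zero. First I would view the given forms $f_1,f_2,f_3,f_4$ of degree $k$ as elements of $F = \cc(x_1,\dots,x_n)$ and apply Theorem \ref{T:EB} verbatim to obtain $\mu,a,b\in F$ satisfying \eqref{E:ebrats}. The content is then to inspect how $a$ and $b$ were built: by \eqref{E:some} the quantities $g_1,g_2,g_3,g_4$ are $\cc$-linear combinations of the $f_j$, hence forms of degree $k$, so in \eqref{E:eb25} the numerators $g_1g_3+3g_2g_4$ and $g_1g_4-g_3g_2$ and the denominator $g_1^2+3g_2^2$ are all forms of degree $2k$.

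Accordingly I would set $r = g_1^2+3g_2^2$, $p = g_1g_3+3g_2g_4$, $q = g_1g_4-g_3g_2$, each a form of degree $2k$, so that $a=p/r$ and $b=q/r$. Note $r = f_1^2-f_1f_2+f_2^2 = (f_1+\om f_2)(f_1+\om^2 f_2)\neq 0$, since the common sum in \eqref{E:eb1} is nonzero; thus the substitution is legitimate. Dividing $p,q,r$ through by their greatest common factor, I may assume $\deg p,\deg q,\deg r\le 2k$, as required, while leaving $a=p/r$ and $b=q/r$ unchanged.

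Next I would substitute $a=p/r,\ b=q/r$ into \eqref{E:ebrats}, using $a\mp 3b=(p\mp 3q)/r$ and $a^2+3b^2=(p^2+3q^2)/r^2$. This yields $f_1 = \mu\,r^{-3}\bigl(r^3-(p-3q)(p^2+3q^2)\bigr)$ and the analogous expression for $f_2$, together with $f_3 = \mu\,r^{-4}\bigl(r^3(p+3q)-(p^2+3q^2)^2\bigr)$ and likewise for $f_4$. Multiplying each of the four by the single factor $r^4/\mu$ produces exactly the four forms on the right-hand side of \eqref{E:ebratpar}. Since the proportionality factor $\mu/r^4$ is identical for all four indices, the tuple \eqref{E:ebratpar} is a common-factor multiple of $(f_1,f_2,f_3,f_4)$, which is precisely the "up to a possible common factor" in the statement.

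The one point requiring care, and the main conceptual step, is the homogeneity observation: recognizing from \eqref{E:eb25} that $a,b$ are genuinely degree-zero rational functions is what lets the homogenization close. The only bookkeeping subtlety is that the naive denominators differ ($r^3$ for $f_1,f_2$ versus $r^4$ for $f_3,f_4$); I would emphasize that the single clearing factor $r^4/\mu$ handles all four simultaneously, so the extra power of $r$ used on $f_1,f_2$ does not disturb the common proportionality. No further computation is needed, as the substitution reproduces \eqref{E:ebratpar} identically.
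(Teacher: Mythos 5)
Your proposal is correct and follows essentially the same route as the paper: the paper likewise defines $a,b$ via \eqref{E:eb25} as ratios of forms with the common denominator $r=g_1^2+3g_2^2$ (degree $2k$, up to cancellation), takes $\mu=r^4$ to clear the formal denominators, and substitutes into \eqref{E:ebrats} to land on \eqref{E:ebratpar}. Your explicit identification of $p,q,r$ and the remark that $r=f_1^2-f_1f_2+f_2^2\neq 0$ only make the same argument slightly more detailed.
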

\begin{proof}
Define $f_1,f_2,f_3,f_4$ as above, and define $a$ and $b$ via \eqref{E:eb25} 
as rational functions with a common denominator, subject to possible cancellation:
\begin{equation}\label{E:7.n}
a = \frac{p(x,y)}{r(x,y)}, \qquad b = \frac{q(x,y)}{r(x,y)}.
\end{equation}
The expressions for $f_3,f_4$ have a formal denominator of $r^4$, so we take 
$\mu(x,y) = r^4(x,y)$, with the 
understanding that cancellation may occur. By substituting \eqref{E:7.n} into
\eqref{E:ebrats}, we obtain \eqref{E:ebratpar}.
\end{proof}
Applying this to the quadruple $(f_1,f_2,f_3,f_4) = 
(F_{6,\la},-F_{4,\la}, F_{3,\la},-F_{5,\la})$,  there is much 
cancellation and
\begin{equation}
\begin{gathered}
a = -\frac{x^2 + y^2}{2\la\ xy}, \quad b  = 
\frac{-i(x^2-y^2)}{2\sqrt{3}\la\ xy}, \quad \mu =  r^4 x y,
\end{gathered}
\end{equation}
so that $p$ and $q$ are quadratic, and $r$ is linear. Other choices for the $f_j$'s lead
to $p,q,r$ of higher degree. There are $3^4\cdot 4! = 1944$ ways to arrange the $f_i$'s, 
counting cube roots of
unity, and we cannot assert that a simpler set of parameters doesn't exist. In the 
famous Ramanujan case of $12^3 + 1^3 = 10^3 + 9^3 = 1729$, the integral version of 
\eqref{E:ebrats} comes from $(a,b,\mu) = (\frac{10}{19}, \frac{7}{19},-\frac{361}{42})$, but
permuting 9 and 10 means that we need denominators of 266 and 333. On the other hand,
 the same identity
flipped as $10^3 + (-1)^3 = (-9)^3 + 12^3$ comes from $(a,b,\mu) = (- \frac 32, \frac12,1)$.

The other standard approach to equal sums of cubes arises from point-addition on the
curve $X^3 + Y^3 = A$; see e.g. \cite{Sil}.
Assuming that $(X,Y) = (X_1,Y_1), (X_2,Y_2)$ lie on this curve, 
the cubic equation $(tX_1 + (1-t)Y_1)^3 + (tX_2 + (1-t)Y_2)^3=A$ has two solutions
$t=0,1$, and so the third may be computed; after simplification,
\begin{equation}\label{E:pointsum}
\begin{gathered}
X_3 = \frac{A(X_1-X_2) +
  Y_1Y_2(X_2Y_1-X_1Y_2)}{(X_1^2X_2 + 
  Y_1^2Y_2)- (X_1X_2^2 + Y_1Y_2^2)}, \\
Y_3 = \frac{A(Y_1-Y_2) +
  X_1X_2(X_1Y_2-X_2Y_1)}{(X_1^2X_2 + 
  Y_1^2Y_2)- (X_1X_2^2 + Y_1Y_2^2)}.
  \end{gathered}
\end{equation}

This computation (usually done over $\mathbb Q$),  is still valid when
$X_i,Y_i$ are polynomials. Of course, the denominator means that the new solution is
usually composed of rational functions. Somewhat astonishingly, \eqref{E:pointsum} is applicable
to \eqref{E:threefold2}, and we present a theorem whose only proof is direct computation. 

\begin{theorem}\label{T:add}
If we take $(X_1,Y_1) = (F_{1,\la}, F_{2,\la})$, 
$(X_2,Y_2) = (F_{3,\la},F_{4,\al})$ and $A = p_{1,\la}(x,y)$ in \eqref{E:pointsum}, then
  $(X_3,Y_3) = (F_{5,\la},F_{6,\la})$.
\end{theorem}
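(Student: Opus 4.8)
The plan is to avoid the brute-force substitution into \eqref{E:pointsum} by using the geometric content of that formula. By \eqref{E:threefold2}, the three points
\[
P_1 = (F_{1,\la},F_{2,\la}),\qquad P_2 = (F_{3,\la},F_{4,\la}),\qquad P_3 = (F_{5,\la},F_{6,\la})
\]
all lie on the affine cubic $X^3+Y^3 = A$ with $A = p_{1,\la}$, now viewed over the function field $\cc(x,y)$. The formula \eqref{E:pointsum} was derived as the \emph{literal} third intersection of the chord through $P_1,P_2$ with this cubic (it records the third root of the cubic in the chord parameter, substituted back, with no group-law reflection). Hence, provided $P_1,P_2,P_3$ are pairwise distinct, it suffices to prove that $P_3$ lies on the line through $P_1$ and $P_2$: the third intersection is then forced to equal $P_3$, yielding $(X_3,Y_3)=(F_{5,\la},F_{6,\la})$ in both coordinates simultaneously.

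Distinctness is immediate, since $\la(\la^6-1)\neq 0$ forces no two of $F_{1,\la},\dots,F_{6,\la}$ to be proportional; for instance $F_{1,\la}-F_{3,\la} = \la^3(1-\om^2)x^2 + \la^3(1-\om)y^2 \neq 0$. Thus the chord is unique and its third meeting point is a genuine third point. Collinearity of $P_1,P_2,P_3$ is exactly the vanishing of
\[
D := \begin{vmatrix} F_{1,\la} & F_{2,\la} & 1 \\ F_{3,\la} & F_{4,\la} & 1 \\ F_{5,\la} & F_{6,\la} & 1 \end{vmatrix}.
\]

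To evaluate $D$ I would substitute $u = x^2,\ v = xy,\ w = y^2$, so that each $F_{j,\la}$ becomes linear in $u,v,w$ and the structure of the rows becomes visible. Expanding along the last column gives $D = F_{1,\la}(F_{4,\la}-F_{6,\la}) - F_{2,\la}(F_{3,\la}-F_{5,\la}) + (F_{3,\la}F_{6,\la}-F_{4,\la}F_{5,\la})$. The first two terms collapse using
\[
F_{4,\la}-F_{6,\la} = \la(\om-\om^2)(u-w),\qquad F_{3,\la}-F_{5,\la} = -\la^3(\om-\om^2)(u-w),
\]
together with the identity $\la F_{1,\la}+\la^3 F_{2,\la} = \la(\la^6-1)v$, and produce $(\om-\om^2)\la(\la^6-1)\,v(u-w)$. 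The cross term $F_{3,\la}F_{6,\la}-F_{4,\la}F_{5,\la}$ is the only bookkeeping-heavy piece: after expansion the $u^2,v^2,w^2,uw$ contributions cancel, and the surviving $uv$ and $vw$ terms recombine, again through the factors $\om-\om^2$ and $\la^6-1$, into exactly $-(\om-\om^2)\la(\la^6-1)\,v(u-w)$. The two pieces are negatives of one another, so $D\equiv 0$.

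The main point to get right is the claim in the first paragraph that \eqref{E:pointsum} returns the \emph{unreflected} third intersection, for it is precisely this that lets collinearity finish the proof in a single stroke rather than coordinate by coordinate; this is guaranteed by the way \eqref{E:pointsum} is obtained as a third root in the chord parameter. The only real computational friction is the cross term, but tracking the symmetry of $\{F_{3,\la},F_{4,\la}\}$ and $\{F_{5,\la},F_{6,\la}\}$ under $(x,y)\mapsto(\om x,\om^2 y)$ keeps it manageable. Finally, one should observe that the denominator in \eqref{E:pointsum} is then automatically nonzero, since it governs the finite, well-defined third root that we have now identified as $P_3$. (Alternatively, one may simply substitute the quadruple $(F_{1,\la},F_{2,\la},F_{3,\la},F_{4,\la})$ and $A=p_{1,\la}$ directly into \eqref{E:pointsum} and simplify, but the collinearity route makes the cancellations transparent.)
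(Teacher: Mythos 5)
Your proof is correct, but it is a genuinely different route from the paper's: the paper offers no argument beyond direct substitution of the six quadratics and $A=p_{1,\la}$ into \eqref{E:pointsum} (it explicitly describes Theorem \ref{T:add} as ``a theorem whose only proof is direct computation''), whereas you reduce everything to collinearity plus the observation that \eqref{E:pointsum} is the literal chord--curve third intersection. Both of your key claims check out. The formula \eqref{E:pointsum} is indeed derived as the third root in the chord parameter with no reflection, so once $P_3$ is a third point of the line on the smooth cubic $X^3+Y^3=A$ (smooth since $p_{1,\la}\neq 0$, hence containing no line), distinctness forces the intersection divisor to be $P_1+P_2+P_3$ and the formula must return $P_3$; moreover the denominator of \eqref{E:pointsum} equals $-\tfrac13\bigl((X_1-X_2)^3+(Y_1-Y_2)^3\bigr)$ once both points lie on the curve, so its nonvanishing is exactly the nondegeneracy you need and is automatic here. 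Your determinant computation is also correct (I verified the cross term), but it can be shortened considerably: in the variables $u=x^2$, $v=xy$, $w=y^2$ one has $F_{1,\la}+\la^2F_{2,\la}=F_{3,\la}+\la^2F_{4,\la}=F_{5,\la}+\la^2F_{6,\la}=(\la^6-1)v$, i.e.\ all three points lie on the single line $X+\la^2Y=(\la^6-1)xy$ --- this is precisely the Type$(\la^2)$ relation the paper records in Section 1, so collinearity is immediate and the $15$-term cofactor expansion is unnecessary. What your approach buys is an explanation of \emph{why} the denominators disappear and a proof that generalizes to any collinear triple of representations; what the paper's brute-force substitution buys is brevity on the page at the cost of opacity. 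One small slip of no consequence: pairwise distinctness of the three \emph{points} only needs $F_{1,\la}\neq F_{3,\la}$ etc., not the stronger non-proportionality of all six forms that you invoke.
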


More generally, if we take the parameterizations from \eqref{E:ebrats} to
add $(f_1,f_2)$ and $(f_3,f_4)$, we obtain denominators. But 
if we add $(f_1,-f_4)$ and $(f_3,-f_2)$, which come from the flip $f_1^3-f_4^3=(-f_2)^3+f_3^3$,
we obtain a third polynomial solution which is apparently new . 
\begin{equation}\label{E:curveadd}
\begin{gathered}
f_1^3-f_4^3=-f_2^3+f_3^3 = (\mu(1 + 2a(a^2 + 3b^2)))^3  -(\mu(2a + (a^2+3b^2)^2))^3. 
\end{gathered}
\end{equation}
A few caveats: even though \eqref{E:ebrats} is a complete parameterization of solutions to 
two equal sums of two cubes; \eqref{E:curveadd} is {\it not} a complete parameterization of 
solutions to three equal sums of two cubes. An extremely tedious application of Theorem
\ref {T:B}
to the three flips of \eqref{E:ebrats} shows that this is the only bonus representation.

As is the case with $\mathbb Q$, there can
be arbitrarily large sets of equal pairs of sums of two cubes. For example, Rouse and the
author give in \cite{RR} 
the complete (infinite) solution to the solution over rational functions of:
\[
x^3 + y^3 = \left(\frac{p(x,y)}{r(x,y)}\right)^3 + \left(\frac{q(x,y)}{r(x,y)}\right)^3, \qquad
p, q, r \in \cc[x,y].
\]
for rational functions $(p/r,q/r)$. Clearing the denominator in any finite family of sums
$x^3 + y^3 = (\frac{p_i}{r_i})^3 +  (\frac{q_i}{r_i})^3, 1 \le i \le N$, gives a set of $N$ equal sums.

We may also take an invariant-theory approach to $N(p) \ge 1$.
In any sum of two cubes of quadratic forms:
\[
\sum_{j=1}^2(\al_{j0}x^2 + \al_{j1}xy + \al_{j2} y^2)^3 = \sum_{k=0}^6c_k x^{6-k}y^k, 
\]
the seven $c_k$'s are cubic polynomials in the six $\al_{j\ell}'s$,
and since $7>6$, we know that the $c_k$'s must be algebraically
dependent. There are $\binom {n+6}6$ monomials in the
$c_j$'s of degree $n$; these are forms of degree $3n$ in the
$\al_{j\ell}'s$, which 
comprise a vector space of dimension $\binom {3n+5}5$.  Eventually, 
$\binom{n+6}6 > \binom {3n+5}5$, so there must be dependence at some degree $n$. 
Unfortunately, the smallest $n$ for which this happens is $n=1442$.

We can be less brute-force and  apply Theorem \ref{T:B}. 
Suppose our given cubic $p$ is a sum of two cubes, factor it
and expand it in the usual way. Write $p$ as
\[ 
\sum_{k=0}^6 c_k x^{6-k}y^k = c_0\left(x^6 + \sum_{k=1}^6 e_k
  x^{6-k}y^k\right) = c_0\prod_{j=1}^6(x + r_j y),
\]
where the $e_k$'s are the elementary symmetric functions in the $r_j$'s.  
As noted earlier, there are 15 ways to divide the 6 $r_j$'s into 3 pairs of roots, and the condition
that the quadratic factors be dependent is equivalent to the vanishing of
\[
H(r):= \prod_{\ell=1}^{15} \begin{vmatrix} 1 & 1 & 1 \\
r_{\sigma_{\ell}(1)} +  r_{\sigma_{\ell}(2)} & r_{\sigma_{\ell}(3)} +
r_{\sigma_{\ell}(4)} & r_{\sigma_{\ell}(5)} +  r_{\sigma_{\ell}(6)} \\
r_{\sigma_{\ell}(1)}r_{\sigma_{\ell}(2)} &
r_{\sigma_{\ell}(3)}r_{\sigma_{\ell}(4)} &
r_{\sigma_{\ell}(5)}r_{\sigma_{\ell}(6)} \end{vmatrix}.
\]
where the product is taken over a suitable subset of $S_6$. (Of course $H(r) = 0$
even if the factors are dependent, so this is a necessary but not sufficient condition.)
Mathematica can compute $H(r)$ without too much
difficulty, and in a few hours transform it into a symmetric function 
in the $e_k$'s of degree 15. Now write  $e_k = c_k/c_0$,  make
the substitution and multiply by $c_0^{15}$ to get the relation. It
has 1360 terms and is {\it isobaric} in the old
sense: each monomial $\prod c_k^{m_k}$ in the product has $\sum m_k = 15, \sum km_k = 45$.
It seems likely that this is the skew invariant called $I_{15}$ in the old literature.  For more 
information, see \cite{E}, especially \S143, \S244 and Examples 20 and 21 on pp.315-6. The
original discovery is attributed there to Joubert.

Finally, here are some of the quadratic parameterizations of \eqref{E:funda} which
can be found in the literature.
The earliest one found in \cite[p.554]{D} was in  J. R. Young's 1816 book {\it Algebra}, in
S. Ward's edition of 1832, and in 1895, by the self-taught mathematician Artemas Martin 
(see \cite{AC}) in a journal he wrote, edited and typeset:
\begin{equation}\label{E:young}
\begin{gathered}
(x^2 + 16xy - 21y^2)^3 + (-x^2 +16xy + 21y^2)^3 +  (2x^2 -4xy + 42y^2)^3  \\ = (2x^2 + 4xy +
42y^2)^3.
\end{gathered}
\end{equation}
This is a Type(4) family.
In fact, Young presented a one-parameter family of such solutions, of Type($n^2$), which 
homogenizes to 
\begin{equation}
\begin{gathered}
(nx^2 -6nxy +3(n^7-n)y^2)^3 + (-x^2 + 6n^3 x y + 3(n^6-1)y^2)^3 \\ 
= (n x^2 + 6n x y + 3(n^7-n) y^2)^3 + (-x^2 - 6n^3 x y + 3(n^6-1)y^2)^3.
\end{gathered}
\end{equation}
By Theorem \ref{T:cubic}, these are similar to the Narayanan  solutions from a century later,
and since their sum is an even polynomial, there isn't a third representation.

S\'andor \cite{San} gave a beautiful solution to \eqref{E:funda} as a conditional
polynomial identity.
 (In 1873, Korneck \cite[p.556]{D} (see \cite[p.122]{San}) gave a similar  family of identities.)
He showed that if $(w_1,w_2,w_3,w_4) \in \mathbb C^4$ satisfy $w_1^3 +
w_2^3 = w_3^3 + w_4^3$, then a quadratic solution to $a^3+ b^3=c^3+d^3$ is given by
the Type$(\frac{w_4-w_2}{w_1-w_3})$ family.
\begin{equation}
\begin{gathered}
a =  w_2(w_1-w_3)x^2 + (w_1^2-w_3^2) x y + w_4(w_4-w_2)y^2, \\
b =  -w_3(w_1-w_3)x^2 + (w_2^2-w_4^2) x y - w_1(w_4-w_2)y^2, \\
c = w_4(w_1-w_3)x^2 + (w_1^2-w_3^2) x y + w_2(w_4-w_2)y^2, \\
d = -w_1(w_1-w_3)x^2 + (w_2^2-w_4^2) x y - w_3(w_4-w_2)y^2.
\end{gathered}
\end{equation}

Hirschhorn has written several papers which explore Ramanujan's approach
to \eqref{E:funda} and related questions. In \cite{H1}, he conjectured that an ``amazing" identity
of Ramanujan in his ``Lost Notebook" could be proved via the Type$(4)$ identity
\begin{equation}
(x^2 + 7xy - 9y^2)^3 + (2x^2 - 4xy + 12y^2)^3 = (2x^2+10y^2)^3 + (x^2
- 9xy - y^2)^3,
\end{equation}
and in \cite[p.388]{H2}, he derived this as a special case of a more general
formula, which homogenizes to the Type$(n^2)$ identity:
\begin{equation}
\begin{gathered}
(3x^2 + 6n^3xy +(1-n^6)y^2)^3 + (3nx^2 - 6nxy +(n^7-n)y^2 )^3 \\
= (3x^2 - 6n^3xy +(1-n^6)y^2)^3 + (3nx^2 + 6nxy +(n^7-n)y^2 )^3.
\end{gathered}
\end{equation}


\end{document}